\UseRawInputEncoding
\documentclass[pdflatex,sn-mathphys-num]{sn-jnl}

\usepackage{graphicx}%
\usepackage{multirow}%
\usepackage{amsmath,amssymb,amsfonts}%
\usepackage{amsthm}%
\usepackage{mathrsfs}%
\usepackage[title]{appendix}%
\usepackage{xcolor}%

\usepackage{textcomp}%
\usepackage{manyfoot}%
\usepackage{booktabs}%
\usepackage{algorithm}%
\usepackage{algorithmicx}%
\usepackage{algpseudocode}%
\usepackage{listings}%
\usepackage{pifont}
\usepackage{bbding}
\usepackage{enumerate}
\usepackage{subfigure}

\theoremstyle{thmstyleone}%
\newtheorem{theorem}{Theorem}
\newtheorem{lemma}{Lemma}
\newtheorem{proposition}{Proposition}%
\newtheorem{corollary}{Corollary}

\theoremstyle{thmstyletwo}%
\newtheorem{example}{Example}%
\newtheorem{remark}{Remark}%

\theoremstyle{thmstylethree}%
\newtheorem{definition}{Definition}%
\newtheorem{assumption}{Assumption}

\begin{document}

\title[
Stochastic Block Bregman Projection with Polyak-like Stepsize for Possibly Inconsistent Convex Feasibility Problems]{
	Stochastic Block Bregman Projection with Polyak-like Stepsize for Possibly Inconsistent Convex Feasibility Problems}


\author[1]{\fnm{Lu} \sur{Zhang}}\email{zhanglu21@nudt.edu.cn}
\author[1]{\fnm{Hongzhen} \sur{Chen}}\email{chenhongzhen25@nudt.edu.cn}
\author[1]{\fnm{Hongxia} \sur{Wang}}\email{wanghongxia@nudt.edu.cn}
\author*[1]{\fnm{Hui} \sur{Zhang}}\email{h.zhang1984@163.com}

\affil*[1]{\orgdiv{College of Science}, \orgname{National University of Defense Technology}, \orgaddress{\city{Changsha}, \postcode{410073}, \state{Hunan}, \country{China}}}


\abstract{
Stochastic projection algorithms for solving convex feasibility problems (CFPs) have attracted considerable attention due to their broad applicability. In this paper, we propose a unified stochastic bilevel reformulation for possibly inconsistent CFPs that combines proximity function minimization and structural regularization, leading to a feasible bilevel model with a unique and stable regularized solution. From the algorithmic perspective, we develop the stochastic block Bregman projection method with Polyak-like and projective stepsizes, which not only subsumes several recent stochastic projection algorithms but also induces new schemes tailored to specific problems. Moreover, we establish ergodic sublinear convergence rates for the expected inner function, as well as linear convergence in expectation to the inner minimizer set under a Bregman distance growth condition. In particular, the proposed Polyak-like stepsize ensures exact convergence in expectation for possibly inconsistent CFPs. Finally, numerical experiments demonstrate the effectiveness of the proposed method and its robustness to noise.
}

\keywords{inconsistent convex feasibility problems, stochastic block method, Bregman projection, adaptive extrapolated stepsize, Polyak-like stepsize}


\pacs[MSC Classification]{90C25, 90C15, 65K05}

\maketitle
\section{Introduction}
\label{sec1}

The convex feasibility problem (CFP) seeks a point in the intersection of finitely many closed convex sets in Euclidean space $\mathbb{R}^n$.
Specifically, given closed convex sets $C_i\subset \mathbb{R}^n$ for $i=1,\ldots,m$, the CFP is formulated as
\begin{equation}
\label{eq1.1}
\operatorname{find}~x\in C=\cap_{i=1}^{m}C_i.
\end{equation}
CFPs arise across applied mathematics and physical sciences, with applications in machine learning \cite{bhattacharyya2004robust}, computerized tomography \cite{Herman1980,censor1987some}, and image recovery \cite{combettes1996convex,combettes1994inconsistent,combettes1999hard}.
The problem is termed consistent if $C\neq\emptyset$ and inconsistent otherwise.
In the consistent case, a classical approach is the projection onto convex sets (POCS) method \cite{bregman1965method}, which alternates orthogonal projections onto the individual sets $C_i$.
Projection methods were originally developed for linear equalities \cite{karczmarz1937angenaherte} and linear feasibility problems (LFPs) \cite{herman2008fast,agmon1954relaxation}, and were subsequently extended to split feasibility problems (SFPs) \cite{zhao2005several,censor2005multiple}, as well as parallel and block projection schemes \cite{censor1989,censor2001averaging}.

In many practical scenarios, however, the intersection $C$ may be empty due to inaccurate or imprecise information in estimation problems \cite{combettes2002use}, inaccurate constraints \cite{goldburg2003signal} and inadequate data modeling \cite{combettes2002methods}
in design problems. Specific examples are in the area of image recovery \cite{combettes1996convex} and computerized tomography \cite{Herman1980}.
As observed in \cite{combettes1994inconsistent}, the convergence behavior of classical projection algorithms in the inconsistent case is generally unsatisfactory, which motivates reformulations that seek meaningful approximate solutions.
A standard remedy replaces the hard feasibility requirement by minimizing an aggregate measure of constraint violation. In particular, \cite{combettes1994inconsistent} replaced (\ref{eq1.1}) by the weighted least-squares model
\begin{equation}
\label{eq1.2}
\min_{x\in\mathbb{R}^{ n}} F(x)=\frac{1}{2}\sum_{i=1}^{m}w_i\operatorname{dist}^2(x,C_i),
\end{equation}
where $\{w_i\}_{i=1}^{m}$ are strictly positive weights satisfying $\sum_{i=1}^{m}w_i=1$ and $\operatorname{dist}(x,C_i)=\inf_{y\in C_i}\|x-y\|$ denotes the Euclidean distance from $x$ to the set $C_i$. The function $F$ is referred to as a proximity function that measures the weighted average squared distance of $x$ to each constraint.
To solve (\ref{eq1.2}), \cite{combettes1994inconsistent} analyzed the parallel projection method (PPM) as a gradient descent method on $F$.
Similar approaches with proximity function minimization have been further developed in \cite{combettes1999hard,iiduka2016optimization}; see \cite{censor2018algorithms} for a comprehensive review of projection methods for inconsistent CFPs.

Recently, stochastic reformulations have become prominent for large-scale CFPs with large dimensions and a large number of convex sets. Following \cite{necoara2019randomized,necoara2022stochastic}, the CFP can be cast as minimizing an expected proximity function
\begin{equation}
\label{eq1.3}
\min_{x\in\mathbb{R}^{ n}} F(x)=\mathbb{E}[f_i(x)]=\sum_{i=1}^{m}\textbf{P}(i)f_i(x)
\end{equation}
with $f_i(x)=\frac{1}{2}\operatorname{dist}^2(x,C_i)$ and a probability distribution $\textbf{P}$ over $[m]$. This viewpoint leads to the stochastic block projection (SBP) method with linear convergence guarantees under bounded linear regularity in the consistent case \cite{necoara2019randomized,necoara2022stochastic}.
Although the convergence analysis of \cite{necoara2019randomized,necoara2022stochastic} focuses on $C\neq \emptyset$, the formulation (\ref{eq1.3}) is well suited for inconsistent CFPs.
In parallel, proximity function minimization based on Bregman distances has been studied for inconsistent CFPs \cite{byrne2001proximity,censor2004steered}.
While such models address inconsistency, they typically do not consider structural properties of the solution when the minimizer set is nonunique.
Therefore, an outer function $\psi:\mathbb{R}^n\rightarrow \mathbb{R}$ can be used to steer the iterations toward solutions with additional properties \cite{bregman1967relaxation,bauschke1997legendre,censor1994multiprojection,bauschke2003bregman}.
In the consistent setting, based on generalized Bregman projections with respect to $\psi$, \cite{lorenz2014linearized} presented a bilevel algorithmic framework
\begin{equation}
\label{eq1.5}
\min_{x\in\mathbb{R}^{n}}
\psi(x),~s.t.~x\in C=\cap_{i=1}^{m}C_i.
\end{equation}
A closely related strategy appears in \cite{beck2014first}, which aims to compute the minimal norm-like solution by minimizing a strongly convex outer function over the underlying solution set.

The stochastic formulation (\ref{eq1.3}) naturally extends to inconsistent CFPs through proximity function minimization but lacks structural priors, whereas the bilevel formulation (\ref{eq1.5}) incorporates regularization but is restricted to the consistent case.
These observations motivate us to propose the following stochastic bilevel reformulation for possibly inconsistent CFPs: 
\begin{equation}
\label{eq1.6}
\min_{x\in\mathbb{R}^{n}}
\psi(x),~s.t.~x\in \arg\min_{x\in\mathbb{R}^{n}} F(x)=\mathbb{E}[f_i(x)],
\end{equation}
where $\psi$ characterizes the structural property of the solution, each convex function $f_i:\mathbb{R}^n\rightarrow \mathbb{R}$ measures the deviation of $x$ from the set $C_i$ for $i=1,\ldots,m$. 
The formulation (\ref{eq1.6}) therefore selects a regularized solution from the minimizer set of the proximity function $F$, and the choices of $f_i$ are discussed in Example \ref{example1}.
To solve (\ref{eq1.6}), we propose the stochastic block Bregman projection (SBBP) method
\begin{equation}
\label{eq1.7}
\left\{
\begin{aligned}
x_{k+1}^*&=x_k^*-t_k\sum_{i\in J_k} w_{i,k}\nabla f_i(x_k),\\
x_{k+1}&=\nabla\psi^*(x_{k+1}^*).
\end{aligned}
\right.
\end{equation}
which provides a unified algorithmic framework that subsumes several state-of-the-art projection-based methods for CFPs, including \cite{schopfer2019linear,dai2024cut,necoara2022stochastic,yuan2022adaptively}.
To construct the stepsize $t_k$, we exploit two insights that have been established in recent literature.
First, by identifying SBBP as an instance of the stochastic mirror descent (SMD) method, we propose the decreasing mirror stochastic Polyak stepsize (DecmSPS) based on \cite{orvieto2022dynamics,bauschke2025boundedness}.
This stepsize guarantees exact convergence in expectation to the inner minimizer set $\bar{X}=\arg\min_{x\in\mathbb{R}^n}F(x)$ even when the CFP is inconsistent, overcoming the limitation that SMD equipped with other Polyak-like stepsizes converges only to a neighborhood of $\bar{X}$ without interpolation \cite{d2021stochastic}.
Second, via a cut-and-project paradigm \cite{dai2024cut,lorenz2014linearized}, we interpret SBBP as the Bregman projection onto a separating halfspace that separates the iterate and the optimal set $\bar{X}$, which leads to an \textsl{exact projective stepsize} and a \textsl{block adaptive extrapolated stepsize} for consistent CFPs.
We establish sublinear convergence under Assumptions \ref{assume.1} and \ref{assume.sampling}, and linear convergence under the Bregman distance growth condition (BDGC) imposed on $F$.



To demonstrate the versatility of SBBP, we specialize the framework to two representative problems, i.e., linear feasibility problems (LFPs) and split feasibility problems (SFPs).
For LFPs, our framework recovers the randomized iterated projection method \cite{leventhal2010randomized} and the randomized sparse Kaczmarz with averaging (RaSKA) \cite{tondji2023faster}. It also yields new sparse variants, including the randomized sparse Kaczmarz method with inequalities (\ref{eq6:1}) and RaSKA equipped with DecmSPS. These variants are suitable for finding sparse solutions of inconsistent LFPs; see Examples \ref{example3}--\ref{example4} and Remark \ref{remark5}--\ref{remark6}. For partition sampling with a fixed block size $|J_k|=\tau$, Corollary \ref{coro5..2} extends the optimal block size analysis of \cite{necoara2022stochastic} from orthogonal projections to the Bregman projections. Numerical experiments in Section \ref{sec6.1} verify the effectiveness of the proposed optimal block size and the benefits of Bregman projections for sparse recovery.
For SFPs, the randomized Bregman projection method \cite{schopfer2019linear} relies on consistency and geometric regularity of the underlying sets; our analysis is based on the stochastic reformulation of SFPs and a Bregman distance growth condition imposed on the inner function $F$, which allows SBBP to handle inconsistent SFPs. Numerical results in Section \ref{sec6.2} further show that mini-batching improves robustness under high noise levels.


\subsection{Contributions}

Specifically, the main contributions of this paper can be summarized as follows.

\begin{itemize}
\item \textbf{Stochastic bilevel reformulation.} We formulate possibly inconsistent CFPs as a feasible bilevel optimization problem. This reformulation moves the convergence analysis from the geometric regularity of the underlying sets to the Bregman distance growth condition imposed on the inner objective $F$, thereby providing a unified analytical framework that also accommodates inconsistent CFPs.
\item \textbf{Algorithmic framework.} We develop the stochastic block Bregman projection method with DecmSPS, an \textsl{exact projective stepsize}, and a \textsl{block adaptive extrapolated stepsize}. The proposed SBBP method extends the SBP method in \cite{necoara2019randomized,necoara2022stochastic} from the Euclidean setting to the Bregman setting, and extends the RBP method in \cite{schopfer2019linear} to possibly inconsistent problems. It not only unifies several existing randomized projection methods, but also induces new algorithmic variants tailored to specific problems (see Section \ref{sec5}).
\item \textbf{Convergence guarantees.} We establish ergodic sublinear convergence for the expected inner function and linear convergence in expectation to the inner minimizer set $\bar{X}$ for SBBP with DecmSPS, covering both consistent and inconsistent CFPs. For SBBP with projective stepsizes, analogous convergence guarantees are provided in the consistent case.
Under additional assumptions, SBBP further converges linearly in expectation to the unique solution of the bilevel problem (\ref{eq1.6}) in the consistent case.
In particular, for LFPs, the induced method from SBBP is a sparse generalization of the randomized iterated projections method \cite{leventhal2010randomized} together with convergence guarantees; for SFPs, the induced method from SBBP with DecmSPS guarantees exact convergence in the inconsistent case, thereby providing a different theoretical route from RBP \cite{schopfer2019linear} and extending the analysis to inconsistent SFPs. 
\end{itemize}

\subsection{Organization}
The remainder of the paper is organized as follows. Section \ref{sec2} introduces notation and reviews basic tools from convex analysis and Bregman projections. Section \ref{sec3} presents the SBBP framework along with several choices of stepsizes. Section \ref{sec4} establishes sublinear and linear convergence results for SBBP equipped with DecmSPS and projective stepsizes, respectively. Section \ref{sec5} develops two application examples and compares SBBP with existing methods. Section \ref{sec6} reports numerical results that illustrate the advantages of Bregman projection methods and the robustness of SBBP against noise. Finally, Section \ref{sec7} concludes the paper and outlines a few directions for future work; all proofs are collected in the Appendix.

\section{Preliminaries}
\label{sec2}
In this section, we summarize the notation and necessary preliminaries from convex analysis, Bregman distance theory, and probability theory.

\subsection{Notation}

We denote the set of integers $\{1,\ldots,m\}$ by $[m]$. For $t\in\mathbb{R}$, let $\lfloor t \rfloor$ denote the largest integer not exceeding $t$.
For a finite set $J$, define $|J|$ as the number of elements in $J$.
For a matrix $A\in\mathbb{R}^{m\times n}$, let $a_i^T$ denote the $i$-th row for $i\in [m]$, $A_J$ denote the submatrix formed by the rows of $A$ indexed by $J$, $A_{:,J}$ denote the submatrix formed by the columns of $A$ indexed by $J$, $\sigma_{\max}(A)$ denote the largest singular value of $A$, $\sigma_{\min}(A)$ denote the smallest singular value of $A$, $\|A\|_F$ denote the Frobenius norm, and $A^{\dagger}$ denote the Moore-Penrose pseudoinverse of $A$. For a vector $b\in\mathbb{R}^m$, $b_i$ denotes its $i$-th element for $i\in [m]$.
Unless otherwise stated, $\|\cdot\|$ denotes the $l_2$-norm.
The interior of a set $C$ is denoted by $\operatorname{int}(C)$.
The orthogonal projection of a point $x\in\mathbb{R}^n$ onto a closed convex set $C$ is denoted by $P_{C}(x)=\arg\min_{y\in C}\|x-y\|$.
The set of minimizers of $f:\mathbb{R}^n\rightarrow\mathbb{R}$ is denoted by $\arg\min_{x\in\mathbb{R}^n} f(x)$, the domain of $f$ is $\operatorname{dom}f=\{x\in\mathbb{R}^n\mid  f(x)<+\infty\}$. 

\subsection{Convex analysis tools}

We recall some well-known concepts of convex functions \cite{rockafellar1998variational}. 
Let $\psi:\mathbb{R}^n\rightarrow \mathbb{R}$ be a convex function. Define the subdifferential of $\psi$ at $x\in \mathbb{R}^n$ as
\begin{eqnarray}
\partial \psi(x):=\{x^*\in \mathbb{R}^n\mid \psi(y)\geq \psi(x)+\langle x^*,y-x\rangle, \forall y\in \mathbb{R}^n\}.\nonumber
\end{eqnarray}
The elements of $\partial \psi(x)$ are the subgradients of $\psi$ at $x$.
The Fenchel conjugate of $\psi$ is defined by
\begin{eqnarray}
\psi^*(x):=\sup_{z\in\mathbb{R}^{n}}\{\langle x,z\rangle-\psi(z)\}.\nonumber
\end{eqnarray}
The convex function $\psi:\mathbb{R}^n\rightarrow \mathbb{R}$ is $\mu$-strongly convex if there exists $\mu>0$ such that for all $x,y\in \mathbb{R}^n$ and $x^*\in \partial \psi(x)$ we have
\begin{eqnarray}
\psi(y)\geq \psi(x)+\langle x^*,y-x\rangle+\frac{\mu}{2}\|y-x\|^2.\nonumber
\end{eqnarray}
The convex function $\psi:\mathbb{R}^n\rightarrow \mathbb{R}$ is differentiable and has an $L$-Lipschitz continuous gradient if there exists $L>0$ such that for all $x,y\in \mathbb{R}^n$ we have
\[
\psi(y)\leq \psi(x)+\langle \nabla\psi(x),y-x\rangle+\frac{L}{2}\|y-x\|^2.
\]

\begin{lemma}
	Let $\psi:\mathbb{R}^n\rightarrow \mathbb{R}$ be $\mu$-strongly convex. Then its conjugate $\psi^*$ is differentiable and has a Lipschitz continuous gradient with modulus $1/\mu$, i.e.,
	$$\|\nabla \psi^*(x^*)-\nabla \psi^*(y^*)\|\leq \frac{1}{\mu}\|x^*-y^*\|,~\forall x^*,y^*\in\mathbb{R}^n.$$
	Moreover, the conditions $\psi(x)+\psi^*(x^*)=\langle x,x^*\rangle,~x^*\in\partial \psi(x)$, and $x\in\partial \psi^*(x^*)$ are equivalent.
\end{lemma}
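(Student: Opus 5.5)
The plan is to first prove the equivalence of the three conditions, since it gives the standard identification $\partial\psi^*=(\partial\psi)^{-1}$, and then to derive differentiability of $\psi^*$ and the Lipschitz bound from strong monotonicity of $\partial\psi$.

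\textbf{Step 1: the equivalence.} Since $\psi$ is finite and convex on $\mathbb{R}^n$, it is continuous, hence proper and lower semicontinuous, so $\psi^{**}=\psi$ by the Fenchel--Moreau theorem. Fenchel--Young gives $\psi(x)+\psi^*(x^*)\geq\langle x,x^*\rangle$ for all $x,x^*$.
\begin{itemize}
\item Equality holds if and only if $\langle x^*,x\rangle-\psi(x)\geq\langle x^*,z\rangle-\psi(z)$ for all $z$. This is exactly the subgradient inequality, i.e.\ $x^*\in\partial\psi(x)$.
\item Applying the same argument to $\psi^*$, and using $\psi^{**}=\psi$, equality holds if and only if $x\in\partial\psi^*(x^*)$.
\end{itemize}

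\textbf{Step 2: $\psi^*$ is finite and single-valued in its subgradient.} Fix $x^*$. Strong convexity gives
\[
\psi(z)\ge\psi(0)+\langle g,z\rangle+\tfrac{\mu}{2}\|z\|^2,\qquad g\in\partial\psi(0),
\]
where $\partial\psi(0)\neq\emptyset$ because $\psi$ is finite and convex. Hence $z\mapsto\langle x^*,z\rangle-\psi(z)$ is strongly concave and coercive, so its supremum is attained at a unique point $\bar z$.
\begin{itemize}
\item It follows that $\psi^*$ is finite everywhere, and therefore $\partial\psi^*(x^*)\neq\emptyset$.
\item By Step 1, $x\in\partial\psi^*(x^*)$ if and only if $x$ maximizes $\langle x^*,\cdot\rangle-\psi$. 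So $\partial\psi^*(x^*)=\{\bar z\}$ is a singleton.
\item A finite convex function whose subdifferential is a singleton at a point is differentiable there. Thus $\nabla\psi^*(x^*)=\bar z$.
\end{itemize}

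\textbf{Step 3: the Lipschitz bound.} Let $x=\nabla\psi^*(x^*)$ and $y=\nabla\psi^*(y^*)$, so that $x^*\in\partial\psi(x)$ and $y^*\in\partial\psi(y)$ by Step 1.
\begin{itemize}
\item Write the strong convexity inequality twice, once at $x$ with $x^*$ evaluated at $y$, and once at $y$ with $y^*$ evaluated at $x$.
\item Adding the two inequalities gives
\[
\langle x^*-y^*,x-y\rangle\ge\mu\|x-y\|^2 .
\]
\item Cauchy--Schwarz then yields $\mu\|x-y\|^2\le\|x^*-y^*\|\|x-y\|$, i.e.\ $\|x-y\|\le\frac1\mu\|x^*-y^*\|$.
\end{itemize}

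\textbf{Main obstacle.} The only delicate point is Step 2: one must ensure that $\psi^*$ is finite everywhere and that a singleton subdifferential implies differentiability. The first part is handled by the quadratic lower bound above. The second is the standard result from convex analysis (Rockafellar, Thm.~25.1), which I would cite.
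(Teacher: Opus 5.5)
Your proof is correct. The paper gives no proof of this lemma. It recalls it in the preliminaries as a standard fact from \cite{rockafellar1998variational}, so your argument is a self-contained replacement for a citation rather than a variant of an in-paper proof.

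The citation buys brevity; standard treatments in fact state an equivalence, of which only the direction you prove is used later. Your route buys transparency. It shows that only finiteness of $\psi$ and the subgradient form of strong convexity are needed. It also identifies $\nabla\psi^*(x^*)$ as the unique maximizer of $\langle x^*,\cdot\rangle-\psi$. That identification is exactly what the paper relies on whenever it asserts $x_{k+1}^*\in\partial\psi(x_{k+1})$ for $x_{k+1}=\nabla\psi^*(x_{k+1}^*)$.

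The step you flag as delicate can be made fully elementary by reordering. Let $z(x^*)$ be the unique maximizer from your Step 2.
\begin{enumerate}
\item The definition of $\psi^*$ gives $\psi^*(y^*)\ge\langle y^*,z(x^*)\rangle-\psi(z(x^*))=\psi^*(x^*)+\langle z(x^*),y^*-x^*\rangle$. Hence $z(x^*)\in\partial\psi^*(x^*)$ directly.
\item Your Step 3 only uses $x^*\in\partial\psi(z(x^*))$ and $y^*\in\partial\psi(z(y^*))$. So it already gives $\|z(x^*)-z(y^*)\|\le\frac{1}{\mu}\|x^*-y^*\|$ before differentiability is known.
\item The two subgradient inequalities at $x^*$ and $y^*$ then give the squeeze
\[
0\le\psi^*(y^*)-\psi^*(x^*)-\langle z(x^*),y^*-x^*\rangle\le\langle z(y^*)-z(x^*),y^*-x^*\rangle\le\frac{1}{\mu}\|y^*-x^*\|^2 .
\]
This proves differentiability with $\nabla\psi^*=z$ without Rockafellar's Theorem 25.1.
\end{enumerate}

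Combined with the elementary fact $\partial\psi^*(x^*)=\{\nabla\psi^*(x^*)\}$ for a convex function differentiable at $x^*$, this also gives the implication from $x\in\partial\psi^*(x^*)$ back to the other two conditions. So Fenchel--Moreau becomes unnecessary as well. The reordering also removes a small forward reference in your Step 1, which manipulates $\partial\psi^*$ before Step 2 has shown that $\psi^*$ is finite. That reference is harmless under the usual extended-valued conventions.
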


\subsection{Bregman distance tools}
The Bregman distance is used as a tool to analyze and design the optimization algorithms.

\begin{definition}
	Let $\psi:\mathbb{R}^n\rightarrow \mathbb{R}$ be strongly convex. Then Bregman distance $D_{\psi}^{x^*}(x,y)$ between $x,y\in\mathbb{R}^n$ with respect to $\psi$ and a subgradient $x^*\in\partial \psi(x)$ is defined as
	$$D_{\psi}^{x^*}(x,y)=\psi(y)-\psi(x)-\langle x^*,y-x\rangle=\psi^*(x^*)-\langle x^*,y\rangle+\psi(y).$$
	If $\psi$ is differentiable then we have $\partial \psi(x)=\{\nabla \psi(x)\}$ and we simply write $D_{\psi}^{x^*}(x,y)=D_{\psi}(x,y)$.
\end{definition}

For strongly convex functions, the Bregman distance can be bounded by the Euclidean distance.
\begin{lemma}[Lemma 2.6, \cite{schopfer2019linear}]
	\label{lemma:2}
	Let $\psi:\mathbb{R}^n\rightarrow \mathbb{R}$ be $\alpha$-strongly convex. Then for all $x,y\in \mathbb{R}^n$ and $x^*\in \partial \psi(x), y^*\in \partial \psi(y)$, we have
	$$
	\frac{\alpha}{2}\|x-y\|^2\leq D_{\psi}^{x^*}(x,y)\leq \langle x^*-y^*,x-y\rangle\leq \|x^*-y^*\|\cdot\|x-y\|,
	$$
	and hence
	$D_{\psi}^{x^*}(x,y)=0$ if and only if $x=y$.
\end{lemma}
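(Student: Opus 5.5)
The plan is to prove the three inequalities of Lemma \ref{lemma:2} from left to right, working only with the subgradient inequality and the defining property of $\alpha$-strong convexity, and then read off the equivalence $D_{\psi}^{x^*}(x,y)=0\iff x=y$ from the outer bounds.

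For the lower bound I will use the strong convexity inequality at the base point $x$ with the subgradient $x^*\in\partial\psi(x)$:
$$\psi(y)\geq \psi(x)+\langle x^*,y-x\rangle+\frac{\alpha}{2}\|y-x\|^2 .$$
Rearranging gives $D_{\psi}^{x^*}(x,y)=\psi(y)-\psi(x)-\langle x^*,y-x\rangle\geq \frac{\alpha}{2}\|x-y\|^2$ immediately.

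For the middle bound I will use the subgradient inequality at $y$ with $y^*\in\partial\psi(y)$, evaluated at the point $x$: $\psi(x)\geq\psi(y)+\langle y^*,x-y\rangle$, that is, $\psi(y)-\psi(x)\leq\langle y^*,y-x\rangle$. Substituting this into the definition of the Bregman distance gives
$$D_{\psi}^{x^*}(x,y)\leq \langle y^*,y-x\rangle-\langle x^*,y-x\rangle=\langle x^*-y^*,x-y\rangle .$$
The rightmost bound is then just the Cauchy--Schwarz inequality.

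For the final claim, if $x=y$ then the definition gives $D_{\psi}^{x^*}(x,x)=0$ directly. Conversely, if $D_{\psi}^{x^*}(x,y)=0$, the lower bound forces $\frac{\alpha}{2}\|x-y\|^2\le 0$, and since $\alpha>0$ we get $x=y$.

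None of these steps is a real obstacle. The only points needing care are the sign bookkeeping in the middle inequality and the fact that the subgradients $x^*$ and $y^*$ exist. The latter holds because $\psi$ is finite-valued and convex on $\mathbb{R}^n$, so $\partial\psi(x)$ and $\partial\psi(y)$ are nonempty. The argument therefore needs no differentiability of $\psi$.
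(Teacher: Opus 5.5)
Your proof is correct. The lower bound is exactly the paper's definition of $\alpha$-strong convexity. The middle bound is the subgradient inequality at $y$, equivalently the nonnegativity of $D_{\psi}^{y^*}(y,x)$ in the identity $D_{\psi}^{x^*}(x,y)+D_{\psi}^{y^*}(y,x)=\langle x^*-y^*,x-y\rangle$. The last bound is Cauchy--Schwarz. The paper gives no proof of its own and simply cites the lemma; your argument is the standard one behind it.
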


\begin{definition}
	Let $\psi:\mathbb{R}^n\rightarrow \mathbb{R}$ be $\alpha$-strongly convex and $C\subset \mathbb{R}^n$ be a nonempty closed convex set. The Bregman projection of $x$ onto $C$ with respect to $\psi$ and $ x^*\in \partial \psi(x)$ is the unique point $\Pi_{C}^{x^*}(x)\in C$ such that
	\begin{eqnarray}
	D_{\psi}^{x^*}(x,\Pi_{C}^{x^*}(x))=\min_{y\in C}D^{x^*}_\psi(x,y):=\operatorname{dist}_{\psi}^{x^*}(x,C)^2.
	\end{eqnarray}
\end{definition}

The Bregman projection can also be characterized by a variational inequality.

\begin{lemma}[Lemma 2.2, \cite{lorenz2014linearized}]
	\label{lemma2.1}
	Let $\psi:\mathbb{R}^n\rightarrow \mathbb{R}$ be $\mu$-strongly convex. Then a point $\hat{x}\in C$ is the Bregman projection of $x$ onto $C$ with respect to $\psi$ and $x^*\in\partial \psi(x)$ iff there is some $\hat{x}^{*}\in \partial \psi(\hat{x})$ such that one of the following equivalent conditions is fulfilled
	$$
	\langle \hat{x}^{*}-x^*,y-\hat{x}\rangle \geq 0,\forall y\in C,
	$$
	$$
	D_{\psi}^{\hat{x}^{*}}(\hat{x},y)\leq D_{\psi}^{x^{*}}(x,y)-D_{\psi}^{x^{*}}(x,\hat{x}),\forall y\in C.
	$$
	We call any such $\hat{x}^{*}$ an admissible subgradient for $\hat{x}=\Pi_{C}^{x^*}(x)$.
\end{lemma}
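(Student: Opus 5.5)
The plan is to derive the three-point identity for Bregman distances and combine it with a first-order characterization of the minimizer of the strongly convex function $y\mapsto D_\psi^{x^*}(x,y)$ over $C$. First I would fix $x$ and $x^*\in\partial\psi(x)$. Using the second expression in the definition, $D_\psi^{x^*}(x,y)=\psi^*(x^*)-\langle x^*,y\rangle+\psi(y)$, so as a function of $y$ it equals $\psi(y)-\langle x^*,y\rangle$ plus a constant. It is therefore $\mu$-strongly convex in $y$, and a minimizer $\hat x$ over $C$ exists and is unique. By the optimality condition for convex minimization over a closed convex set (the sum rule applies, since $\psi$ is finite everywhere), $\hat x$ is the minimizer if and only if there are $\hat x^*\in\partial\psi(\hat x)$ and $\nu\in N_C(\hat x)$ with $0=\hat x^*-x^*+\nu$. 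This is the same as $\langle \hat x^*-x^*,y-\hat x\rangle\ge0$ for all $y\in C$, which gives the first condition.

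For the equivalence of the two conditions, I would use the three-point identity, valid for any $\hat x^*\in\partial\psi(\hat x)$ and checked by expanding definitions:
\[
D_\psi^{x^*}(x,y)=D_\psi^{x^*}(x,\hat x)+D_\psi^{\hat x^*}(\hat x,y)+\langle \hat x^*-x^*,y-\hat x\rangle .
\]
If the variational inequality holds, the inner product is nonnegative, and the second inequality follows immediately.

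Conversely, suppose the second inequality holds for all $y\in C$. By the identity, this means $\langle \hat x^*-x^*,y-\hat x\rangle\ge0$ for every $y\in C$, which is exactly the first condition. So the two conditions are literally equivalent given the identity. The "iff $\hat x$ is the projection" part then also follows from the second condition directly: it gives $D_\psi^{x^*}(x,y)\ge D_\psi^{x^*}(x,\hat x)$ for all $y\in C$, so $\hat x$ is a minimizer, hence the unique projection.

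The main obstacle is the necessity direction, namely producing an admissible $\hat x^*\in\partial\psi(\hat x)$ when $\psi$ is not differentiable. For this I would invoke the subdifferential sum rule $\partial(\psi-\langle x^*,\cdot\rangle+\iota_C)(\hat x)=\partial\psi(\hat x)-x^*+N_C(\hat x)$. Its qualification condition holds because $\operatorname{dom}\psi=\mathbb{R}^n$, so $\psi$ is continuous everywhere. Fermat's rule $0\in\partial(\cdot)(\hat x)$ then yields the required $\hat x^*$.
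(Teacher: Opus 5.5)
Your proof is correct. The paper gives no proof of its own for this lemma; it imports it from \cite{lorenz2014linearized}. Your argument is the standard proof of that fact: Fermat's rule combined with the Moreau--Rockafellar sum rule (whose qualification holds because a finite convex $\psi$ is continuous on $\mathbb{R}^n$) produces the admissible subgradient $\hat{x}^*$. The three-point identity $D_{\psi}^{x^*}(x,y)=D_{\psi}^{x^*}(x,\hat{x})+D_{\psi}^{\hat{x}^*}(\hat{x},y)+\langle \hat{x}^*-x^*,y-\hat{x}\rangle$ then makes the two conditions equivalent for each fixed $\hat{x}^*$. It also gives sufficiency, once you note explicitly that $D_{\psi}^{\hat{x}^*}(\hat{x},y)\geq 0$ because $\hat{x}^*\in\partial\psi(\hat{x})$.
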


Note that for $\psi(x)=\frac{1}{2}\|x\|_2^2$, the Bregman projection $\Pi_{C}^{x^*}(x)$ reduces to the orthogonal projection onto $C$, i.e. $P_{C}(x)$.
The next lemma characterizes the Bregman projection onto halfspaces. 

\begin{lemma}[Lemma 2.4, \cite{lorenz2014linearized}]
	\label{lemma2.2}
	Let $\psi:\mathbb{R}^n\rightarrow \mathbb{R}$ be $\mu$-strongly convex, $A\in\mathbb{R}^{m\times n}$, $b\in\mathbb{R}^{m}$, $u\in\mathbb{R}^{n}$ and $\beta\in\mathbb{R}$.
	The Bregman projection of $x\in\mathbb{R}^{n}$ onto the halfspace $H(u,\beta)=\{x\in\mathbb{R}^{n}\mid \langle u,x\rangle\leq\beta\}$ with $u\neq 0$ is
	$$\hat{x}:=\Pi_{H(u,\beta)}^{x^*}(x)=\nabla \psi^*(x^*-\hat{t}\cdot u),$$
	where $\hat{t}\in \mathbb{R}$ is a solution of 
	$$
	\min_{t\in \mathbb{R}} \psi^*(x^*-t\cdot u)+t\cdot\beta.
	$$
\end{lemma}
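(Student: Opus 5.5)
The plan is to read the Bregman projection onto the halfspace $H(u,\beta)$ as a convex program with one affine inequality constraint and derive its optimality conditions by Lagrangian duality. By the definition of the Bregman projection, $\hat{x}$ minimizes
\[
D_{\psi}^{x^*}(x,y)=\psi^*(x^*)-\langle x^*,y\rangle+\psi(y)
\]
over $y\in H(u,\beta)$. Dropping the constant $\psi^*(x^*)$, this is the problem
\[
\min_y\ \psi(y)-\langle x^*,y\rangle \quad \text{s.t.}\quad \langle u,y\rangle\le\beta .
\]
The objective is strongly convex and the constraint is affine. Since $u\neq0$, the halfspace is nonempty, so the linear-constraint Slater condition holds and strong duality is available.

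The first step is to form the Lagrangian with multiplier $t\ge 0$,
\[
L(y,t)=\psi(y)-\langle x^*-tu,y\rangle-t\beta .
\]
Minimizing over $y$ gives $-\psi^*(x^*-tu)-t\beta$, by the definition of the Fenchel conjugate. The dual problem is therefore
\[
\max_{t\ge0}\ -\psi^*(x^*-tu)-t\beta,
\]
which is the same as minimizing $\psi^*(x^*-tu)+t\beta$.

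Next, I would show this dual has a solution. The function $\phi(t)=\psi^*(x^*-tu)+t\beta$ is convex, and it is differentiable because $\psi^*$ is differentiable by the earlier lemma on strongly convex functions. Existence of a minimizer would follow from coercivity, obtained from $\psi^*(z)\ge\langle z,y_0\rangle-\psi(y_0)$ with a suitable $y_0$. Strong duality then produces a dual optimal $\hat t\ge0$. Let $y$ be the minimizer of $L(\cdot,\hat t)$. It satisfies $x^*-\hat t u\in\partial\psi(y)$, so $y=\nabla\psi^*(x^*-\hat t u)$ by the conjugate subgradient equivalence in the earlier lemma. By uniqueness of the primal solution, this point equals $\hat{x}$.

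The main obstacle is reconciling the sign constraint: the statement minimizes over $t\in\mathbb{R}$, whereas duality gives $t\ge0$. I would handle this directly through the optimality condition for $\phi$. Any stationary point $\hat t\in\mathbb{R}$ satisfies
\[
\phi'(\hat t)=-\langle u,\nabla\psi^*(x^*-\hat tu)\rangle+\beta=0,
\]
so $\hat x:=\nabla\psi^*(x^*-\hat tu)$ lies on the boundary hyperplane. Set $\hat x^*=x^*-\hat tu$, which is an admissible subgradient at $\hat x$. Then verify the variational inequality of Lemma \ref{lemma2.1}: for $y\in H$,
\[
\langle \hat x^*-x^*,y-\hat x\rangle=-\hat t\bigl(\langle u,y\rangle-\beta\bigr),
\]
which is nonnegative provided $\hat t\ge0$. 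If $x\in H$, the solution is $\hat t=0$ and the claim is trivial.

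When $x\notin H$, I need to rule out $\hat t<0$. Monotonicity of $\nabla\psi^*$ gives $t\mapsto\langle u,\nabla\psi^*(x^*-tu)\rangle$ nonincreasing, and at $t=0$ it equals $\langle u,x\rangle>\beta$. Hence any zero of $\phi'$ occurs at some $\hat t\ge0$, and the variational characterization completes the proof.
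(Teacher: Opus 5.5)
The step that fails is your sentence ``If $x\in H$, the solution is $\hat t=0$ and the claim is trivial.'' That is true for the constrained dual $\min_{t\ge0}\phi(t)$ from your Lagrangian paragraph. It is not true for $\min_{t\in\mathbb{R}}\phi(t)$, which is what the statement uses.

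Suppose $\langle u,x\rangle<\beta$. Then $\phi'(0)=\beta-\langle u,x\rangle>0$. By your own monotonicity observation, $\phi'(t)=\beta-\langle u,\nabla\psi^*(x^*-tu)\rangle$ is nondecreasing, so every minimizer over $\mathbb{R}$ has $\hat t<0$. The point $\nabla\psi^*(x^*-\hat t u)$ then lies on the hyperplane $\langle u,\cdot\rangle=\beta$, whereas $\Pi^{x^*}_{H(u,\beta)}(x)=x$ lies strictly inside. Concretely, take $n=1$, $\psi(x)=\frac12x^2$, $u=\beta=1$, $x=x^*=0$. Then $\phi(t)=\frac12t^2+t$, so $\hat t=-1$ and $\nabla\psi^*(x^*-\hat tu)=1\neq0$.

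So the lemma as printed is false for interior points, and the sign issue you flagged cannot be argued away. The paper gives no proof of its own; it only cites Lorenz et al. The formula with $t\in\mathbb{R}$ characterizes the Bregman projection onto the hyperplane $\{y:\langle u,y\rangle=\beta\}$. That projection coincides with the halfspace projection exactly when $\langle u,x\rangle\ge\beta$.

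Your write-up does prove the two correct versions of the lemma:
\begin{itemize}
\item The duality paragraph gives the formula with $\hat t$ minimizing $\phi$ over $t\ge0$.
\item Your last two paragraphs give the printed formula under the extra hypothesis $\langle u,x\rangle>\beta$. Stationarity puts $\hat x$ on the hyperplane, $\hat x^*=x^*-\hat tu$ is admissible, monotonicity of $\nabla\psi^*$ forces $\hat t>0$, and Lemma \ref{lemma2.1} concludes.
\end{itemize}
This restricted version is all the paper uses. In Theorem \ref{th4.1}, $\langle\alpha_k,x_k\rangle-\beta_k=\sum_{i\in J_k}\frac{w_{i,k}}{L_i}\|\nabla f_i(x_k)\|^2>0$ whenever $\alpha_k\neq0$.

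Two smaller repairs are needed.
\begin{itemize}
\item In the boundary case $\langle u,x\rangle=\beta$, minimizers of $\phi$ need not be unique and may be negative, so your variational-inequality check does not apply. Argue by strong monotonicity instead: $\mu\|\hat x-x\|^2\le\langle \hat x^*-x^*,\hat x-x\rangle=-\hat t(\langle u,\hat x\rangle-\langle u,x\rangle)=0$.
\item Coercivity of $\phi$ in both directions needs two points $y_0$, one on each side of the hyperplane. When $\langle u,x\rangle>\beta$, a single $y_0$ with $\langle u,y_0\rangle<\beta$ suffices, since $\phi$ is decreasing on $(-\infty,0]$.
\end{itemize}
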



\subsection{Probability results}

Let $J$ be a random set-valued map with values in $2^{[m]}$ with probability distribution $\textbf{P}$. For any index $i\in [m]$, let $p_i=\textbf{P}(i\in J)$ define the probability that the index $i\in [m]$ is included in the set $J$.
Consequently, as established in \cite{necoara2022stochastic}, for any scalars $\theta_i,~i\in [m]$, the following identity holds in expectation
\begin{eqnarray}
\label{eq3.6}
	\mathbb{E}[\sum_{i\in J} \theta_i]
	=\sum_{J\subseteq [m]}(\sum_{i\in J}\theta_i) \textbf{P}(J)
	=\sum_{i\in [m]} \theta_i \sum_{J:i\in J}\textbf{P}(J)
	=\sum_{i\in [m]} \theta_i\textbf{P}(i\in J)
	=\sum_{i\in [m]} \theta_ip_i.
	\end{eqnarray}

\section{The proposed algorithmic framework}
\label{sec3}

In this section, we propose the SBBP method based on our proposed stochastic reformulation for solving possibly inconsistent CFPs. 
Subsequently, we propose two stepsize schemes for SBBP, namely, the Polyak-like stepsize and projective stepsizes. All proofs in this section are deferred to Appendix \ref{secA1}.

\subsection{Stochastic reformulation}
\label{sec3.1}

In practical applications, one often does not know in advance whether the convex feasibility problem (\ref{eq1.1}) is consistent or not. 
To address possibly inconsistent CFPs while enabling a unique and regularized solution, we propose the following bilevel reformulation of (\ref{eq1.1}): 
\begin{equation}
\label{eq3.3}
\min_{x\in\mathbb{R}^{n}}
 \psi(x),~s.t.~x\in\bar{X}= \arg\min_{x\in\mathbb{R}^{n}}F(x),
\end{equation}
where the proximity function is defined by $F(x)=\mathbb{E}[f_i(x)]=\sum_{i=1}^{m}\textbf{P}(i)f_i(x)$. 
Here each deviation function $f_i$ quantifies the deviation of a point $x$ from the set $C_i$, and the index $i$ is sampled from $[m]$ according to the probability distribution $\textbf{P}$ with $\textbf{P}(i)>0$ and $\sum_{i\in [m]}\textbf{P}(i)=1$ for all $i\in [m]$. We impose the following assumptions.
\begin{assumption}
	\begin{enumerate}[(a)]
		\item The inner minimizer set $\bar{X}= \arg\min_{x\in\mathbb{R}^{n}}F(x)$ is nonempty.
		\item $\psi$ is strongly convex over $\mathbb{R}^n$ with modulus $\mu>0$;
		\item $f_i,~i=1,\ldots,m$ are convex and have Lipschitz continuous gradients with modulus $L_i>0$. 
	\end{enumerate}
	\label{assume.1}
\end{assumption}

Under Assumption \ref{assume.1}, the bilevel problem admits a unique solution.
This bilevel formulation (\ref{eq3.3}) is designed to simultaneously address two fundamental challenges that arise in CFPs.
First, the inner optimization relaxes the exact feasibility requirement and seeks a point that is not too far from each individual set, in the spirit of the stochastic reformulation in \cite{necoara2019randomized}.
If $C$ is nonempty, then, under Assumption \ref{assume.1}, the proximity function $F$ attains the minimum value 0 exactly on $C$, and hence the inner problem in (\ref{eq3.3}) is equivalent to the original CFP (\ref{eq1.1}). If the intersection $C$ is empty, the reformulation (\ref{eq3.3}) transforms the infeasible CFP into a feasible bilevel optimization problem.
The incorporation of the deviation function $f_i$, also known as the loss function in machine learning, not only extends
the applicability of the framework to accommodate inconsistent CFPs, but also allows us to describe the infeasibility
based on the geometry of the constraint $C_i$. In particular, when $f_i(x)=\frac{1}{2}\|x-P_{C_i}(x)\|^2$, the inner
optimization problem reduces to the stochastic smooth optimization formulation studied in
\cite{necoara2019randomized,necoara2022stochastic,polyak2001random,nedic2010random}.

Second, since the inner minimizer set $\bar{X}$ may be non-singleton, the outer level selects a unique regularized solution by minimizing the strongly convex function $\psi$ over $\bar{X}$, as in \cite{lorenz2014linearized,beck2014first}.
This viewpoint naturally motivates the use of Bregman projections with respect to $\psi$ in algorithm design \cite{lorenz2014linearized}.
With appropriate choices of the regularizer $\psi$ and the deviation function $f_i$, the bilevel optimization problem (\ref{eq3.3}) encompasses various specific problems, including sparse solutions for SFPs and linear systems \cite{lorenz2014linearized,schopfer2019linear}, and minimum-norm solutions for CFPs \cite{necoara2022stochastic}.


\subsection{The proposed method}

Building on the stochastic reformulation established in Section \ref{sec3.1}, we propose the stochastic block Bregman projection (SBBP) method, as summarized in Algorithm \ref{al1}.
To address the computational challenges in large-scale problems, we avoid employing all sets at each iteration by sampling a minibatch of indices. 
Specifically, at the $k$-th iteration, a subset of indices $J_k=\{i_k^{1},\ldots,i_k^{\tau_k}\}$ is drawn from the probability distribution $\textbf{P}$. This approach significantly reduces the per-iteration computational cost and enables parallel implementation.
Furthermore, we employ the outer function $\psi$ as a kernel function to induce Bregman projections, which is designed to steer the iterates toward the unique regularized solution of the bilevel problem (\ref{eq3.3}). 

\begin{algorithm}[h]
		\caption{Stochastic block Bregman projection method}\label{al1}
	\begin{algorithmic}[1]
		\State \textbf{Initialize}: choose $x_{0}\in \mathbb{R}^{n}$ and $x_0^*\in\partial \psi(x_0)$ 
		\State \textbf{for} $k=0,1,\ldots,N$
		\State \hspace*{2em}draw a sample $J_k\sim \textbf{P}$ with $|J_k|=\tau_k$
		\State \hspace*{2em}update the stepsize $t_k$ according to the chosen rule
		\State \hspace*{2em}update the dual variable: $$x_{k+1}^*=x_k^*-t_k\sum_{i\in J_k} w_{i,k}\nabla f_i(x_k)$$
		\State \hspace*{2em}update the primal variable: $$x_{k+1}=\nabla\psi^*(x_{k+1}^*)$$
		\State \textbf{end} 
	\end{algorithmic}
\end{algorithm}

In Algorithm \ref{al1}, the weights $w_k=(w_{i,k})_{i\in J_k}$ are chosen to be positive and sum to one, i.e., $\sum_{i\in J_k} w_{i,k}=1$. The selection of the stepsize $t_k$ is deferred to the next subsection.
Concerning the stopping criterion, a standard approach for stochastic algorithms is to fix a maximum number of
iterations, which is widely used in the literature \cite{lorenz2014linearized,schopfer2019linear}. Alternatively, one
may consider stopping criteria based on the proximity function, such as $F(x_k)-F(x_{k+1})\leq \varepsilon$ or $\|\nabla
F(x_k)\|\leq \varepsilon$ for a suitably small accuracy $\varepsilon>0$, as suggested in
\cite{combettes1994inconsistent}. However, evaluating these full quantities is often computationally prohibitive.
Therefore, we define $f_{J_k}(x)=\sum_{i\in J_k} w_{i,k} f_i(x)$ and employ a mini-batch gradient inspired by the
stopping rule in \cite{nguyen2017sarah}, i.e., $\|\nabla f_{J_k}(x_k)\|\leq \varepsilon$.

The key advantage of the SBBP framework lies in its broad applicability and verifiability. Specific examples of this framework are detailed in Example \ref{example1}.

\begin{example}
	\label{example1}
	Algorithm \ref{al1} encompasses a broad range of existing methods as special cases through appropriate selections of $\psi$ and $f_i$.
	\begin{enumerate}[(a)]
		\item When $f_i(x)= \frac{1}{2}\operatorname{dist}(A_ix,Q_i)^2$, Algorithm \ref{al1} induces a randomized Bregman projection method for split feasibility problems. From the cut-and-project perspective, it is closely aligned with the RBP method in \cite{schopfer2019linear}, but differs in the construction of the separating halfspace; see Section \ref{sec5.2}. 
		\item When $\psi(x)=\frac{1}{2}\|x\|^2$ and $f_i(x)=\frac{1}{2}\operatorname{dist}(x,C_i)^2$, Algorithm \ref{al1} reduces to the stochastic block projection (SBP) method in \cite{necoara2022stochastic} for solving the CFP, i.e.,
		$$
		x_{k+1}=x_k-t_k\sum_{i\in J_k} w_{i,k}(x_k-P_{C_i}(x_k)).
		$$
		The SBBP method generalizes SBP from the Euclidean setting to the Bregman setting, thereby enabling the recovery of solutions characterized by the regularizer $\psi$. 
		\item When $f_i(x)=\frac{1}{2}\|Ax-b\|^2_{H_i}$ with $H_i=S_i(S_i^TAA^TS_i)^{\dagger}S_i^T$, Algorithm \ref{al1} simplifies to the sketched Bregman projection method in \cite{yuan2022adaptively} for solving the linear system $Ax=b$. 
		In particular, the randomized sparse Kaczmarz method in \cite{schopfer2019linear} is a special case of the proposed SBBP method.
		\item When $J_k=[m]$ and $w_{i,k}=\frac{1}{m}$ for all $i\in [m]$ and $k\geq 0$, Algorithm \ref{al1} reduces to the Bregman projection method based on cutting halfspaces in \cite{dai2024cut}.
		In this sense, Algorithm \ref{al1} can be interpreted as a stochastic extension of this deterministic method in \cite{dai2024cut}.
	\end{enumerate}
\end{example}

\subsection{The choice of stepsize $t_k$}
\label{sec3.3}
In this subsection, we derive stepsizes $t_k$ by analyzing the SBBP method from two distinct but complementary perspectives: the optimization perspective yielding Polyak-like stepsizes, and the geometric perspective yielding projective stepsizes.

\subsubsection{Decreasing mirror stochastic Polyak stepsize via SMD}
\label{sec3.3.1}


Building on insights from \cite{azizan2019characterization,d2021stochastic,d2021stochastic2}, the SBBP method can be interpreted as an instance of the stochastic mirror descent (SMD) method
\begin{equation}
\label{eq3:1}
x_{k+1}=\arg\min_{x\in\mathbb{R}^{ n}} \{
t_k\langle \sum_{i\in J_k} w_{i,k}\nabla f_i(x_k),x-x_k\rangle+D_{\psi}^{x_k^*}(x_k,x)
\}.
\end{equation}
Under Assumption \ref{assume.1}, it follows from the definition of Bregman distance and the Lipschitz continuity of $\nabla \psi^*$ with $1/\mu$ that for $x_{k+1}^*\in\partial\psi(x_{k+1})$ and any $x\in\mathbb{R}^n$,
\begin{equation*}
\begin{aligned}
&~~~~D_{\psi}^{x_{k+1}^*}(x_{k+1},x)\\
&=\psi^*(x_{k+1}^*)+\psi(x)-\langle x_{k+1}^*,x\rangle\\
&\leq \psi^*(x_k^*)-t_k\langle \sum_{i\in J_k} w_{i,k}\nabla f_i(x_k),x_k\rangle+\frac{t_k^2}{2\mu}\|\sum_{i\in J_k} w_{i,k}\nabla f_i(x_k)\|^2+\psi(x)-\langle x_{k+1}^*,x\rangle\\
&=D_{\psi}^{x_{k}^*}(x_{k},x)-t_k\langle \sum_{i\in J_k} w_{i,k}\nabla f_i(x_k),x_k-x\rangle+\frac{t_k^2}{2\mu}\|\sum_{i\in J_k} w_{i,k}\nabla f_i(x_k)\|^2.
\end{aligned}
\end{equation*}
Using the convexity of $f_i,~i=1,\ldots,m$ that for any $x\in \mathbb{R}^n$ we have 
\begin{equation}
\label{eq4.2}
D_{\psi}^{x_{k+1}^*}(x_{k+1},x)
\leq
D_{\psi}^{x_{k}^*}(x_{k},x)
-t_k\sum_{i\in J_k} w_{i,k} (f_i(x_k)-f_i(x))
+\frac{t_k^2}{2\mu}\|\sum_{i\in J_k} w_{i,k}\nabla f_i(x_k)\|^2.
\end{equation}
Minimizing the right-hand side of the above inequality with respect to $t_k$ yields the \textsl{mirror stochastic Polyak stepsize} (mSPS) studied in \cite{loizou2021stochastic,d2021stochastic,d2021stochastic2,orvieto2022dynamics}:
\begin{equation}
\label{eq4.3}
t_k=\mu\cdot\frac{\sum_{i\in J_k} w_{i,k} (f_i(x_k)-f_i(x)) }{\|\sum_{i\in J_k} w_{i,k}\nabla f_i(x_k)\|^2}.
\end{equation}
Motivated by \cite{orvieto2022dynamics,bauschke2025boundedness}, we use a lower bound $l_{J}^{*}$ satisfying $l_{J}^{*}\leq \inf_{x\in\mathbb{R}^n} \sum_{i\in J} w_{i,k}f_i(x)$ and propose the \textsl{decreasing mirror stochastic Polyak stepsize} (DecmSPS):
\begin{equation}
\label{eq4:1}
t_k=\lambda_k \min\left\{ \mu\cdot\frac{\sum_{i\in J_k} w_{i,k} (f_i(x_k)-l_{J_k}^{*}) }{\|\sum_{i\in J_k} w_{i,k}\nabla f_i(x_k)\|^2},\frac{t_{k-1}}{\lambda_{k-1}}\right\},
\end{equation}
where $\lambda_k$ is any nonincreasing positive sequence with initialization $\lambda_{-1}=\lambda_{0}$, $t_{-1}=\gamma_b>0$. 
The DecmSPS integrates a diminishing stepsize with the adaptiveness of mirror stochastic Polyak stepsize (mSPS). 
For some special problems, the deviation function $f_i$ is non-negative hence one can pick $l_{J}^{*}=0$ for any $J\subset [m]$.

The following lemma establishes the boundedness of the DecmSPS sequence. The proof proceeds analogously to Lemma 1 in \cite{orvieto2022dynamics} and is omitted for brevity.

\begin{lemma}
	\label{lemma4.1}
	Under Assumption \ref{assume.1}, let $t_k$ be generated by DecmSPS (\ref{eq4:1}) in Algorithm \ref{al1}. Let $\lambda_{-1}=\lambda_{0},t_{-1}=\gamma_b>0$ and $\{\lambda_k\}$ be any nonincreasing positive sequence. Then we have $t_{k+1}\leq t_k$ and 
	$$
	\lambda_k\tilde{L}
	\leq t_k\leq\frac{\gamma_b}{\lambda_0}\lambda_k,
	$$	
	where $\tilde{L}=\min\left\{\frac{\mu}{2L_{\max}},\frac{\gamma_b}{\lambda_0} \right\}$ and $L_{\max}=\max_{i\in[m]}L_i$.
\end{lemma}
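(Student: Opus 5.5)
The plan is to set $\gamma_k:=t_k/\lambda_k$ and write the DecmSPS rule (\ref{eq4:1}) as
\[
\gamma_k=\min\Bigl\{\mu\cdot\frac{f_{J_k}(x_k)-l_{J_k}^*}{\|\nabla f_{J_k}(x_k)\|^2},\ \gamma_{k-1}\Bigr\},\qquad \gamma_{-1}=\frac{t_{-1}}{\lambda_{-1}}=\frac{\gamma_b}{\lambda_0},
\]
where $f_{J_k}=\sum_{i\in J_k}w_{i,k}f_i$ and $\lambda_{-1}=\lambda_0$ is used. Everything then reduces to properties of the scalar sequence $\gamma_k$.

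First, the recursion gives $\gamma_k\le\gamma_{k-1}$ at once. Iterating down to $\gamma_{-1}$ yields $\gamma_k\le \gamma_b/\lambda_0$, which is the upper bound $t_k\le \frac{\gamma_b}{\lambda_0}\lambda_k$. Monotonicity of $t_k$ then follows from
\[
t_{k+1}=\lambda_{k+1}\gamma_{k+1}\le\lambda_k\gamma_k=t_k,
\]
since both $\lambda$ and $\gamma$ are nonincreasing and positive.

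For the lower bound, I would first show that the Polyak ratio satisfies $\mu\,(f_{J}(x)-l_J^*)/\|\nabla f_J(x)\|^2\ge \mu/(2L_{\max})$. The function $f_J$ is convex with $L_{\max}$-Lipschitz gradient, because it is a convex combination of the $f_i$, each $L_i$-smooth with $L_i\le L_{\max}$. The standard descent-lemma argument evaluates $f_J$ at $y=x-\frac1{L_{\max}}\nabla f_J(x)$, which gives
\[
l_J^*\le \inf f_J\le f_J(y)\le f_J(x)-\frac{1}{2L_{\max}}\|\nabla f_J(x)\|^2 .
\]
Rearranging gives the claimed bound on the ratio. By induction, $\gamma_k\ge\min\{\mu/(2L_{\max}),\gamma_{k-1}\}\ge\min\{\mu/(2L_{\max}),\gamma_b/\lambda_0\}=\tilde L$, and multiplying by $\lambda_k$ gives $\lambda_k\tilde L\le t_k$.

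The main obstacle is the degenerate case $\nabla f_{J_k}(x_k)=0$, where the ratio is $+\infty$ or undefined. I would adopt the convention that the minimum then equals $\gamma_{k-1}$; equivalently, the ratio is read as $+\infty$, which is consistent with the inequality above because it forces $f_{J_k}(x_k)\ge l^*_{J_k}$. With this convention all the bounds still hold. The other point to check is that $l_J^*\le\inf f_J$ is used with the same weights $w_{i,k}$ that appear in the ratio, which the definition guarantees.
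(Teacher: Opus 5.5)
Your proposal is correct and follows the argument the paper intends. The paper omits the proof, deferring to Lemma 1 of \cite{orvieto2022dynamics}, which rests on the same ingredients: the rescaling $\gamma_k=t_k/\lambda_k$, the smoothness bound $\mu(f_J-l_J^*)/\|\nabla f_J\|^2\ge\mu/(2L_{\max})$, and induction through the truncated minimum. Your convention for the degenerate case $\nabla f_{J_k}(x_k)=0$ is a harmless detail that the paper leaves implicit.
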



\subsubsection{Projective stepsize via cut-and-project}
\label{sec3.3.2}
The geometric cut-and-project perspective discussed in \cite{dai2024cut} can be naturally extended to our proposed SBBP method. 
Since each $f_i$ is convex and has a Lipschitz-continuous gradient with modulus $L_i$ for $i\in [m]$, the following inequality holds for any $x,y\in\mathbb{R}^n$:
\begin{equation}
\label{eq3.1}
\langle \nabla f_i(x)-\nabla f_i(y),x-y\rangle\geq \frac{1}{L_i}\|\nabla f_i(x)-\nabla f_i(y)\|^2.
\end{equation}
To construct the separating hyperspace, we impose the following assumptions. 

\begin{assumption}
	\label{assume.projective}
Assume that the intersection $C$ in (\ref{eq1.1}) is nonempty. In addition, $f_i(x)\geq 0$ and $f_i(x)=0$ if and only if $x\in C_i$ for all $i\in [m]$.
\end{assumption}

Under Assumption \ref{assume.projective}, for any $x\in C$, since each $f_i$ is convex, differentiable, nonnegative, and attains its minimum value $0$ on $C_i$, it holds that $\nabla f_i(x)=0$ for all $i\in [m]$. It follows from $F(x)=\mathbb{E}[f_i(x)]\geq 0$ for all $x$ that $F(x)=0$ for every $x\in C$ and $\bar{X}=\arg\min_{x\in\mathbb{R}^{n}}F(x)=C$.

Setting $y=x_k$ and $x\in\bar{X}=C$ in (\ref{eq3.1}) and summing over $i\in J_k\subset [m]$ with weights $w_{i,k}$, we construct a separating halfspace
\begin{equation}
\label{eq4.11}
H_k=\{x\in \mathbb{R}^n\mid \langle \alpha_k ,x\rangle\leq \beta_k \},
\end{equation}
where the weighted gradient $\alpha_k$ and the scalar $\beta_k$ are defined as
$$\alpha_k=\sum_{i\in J_k} w_{i,k}\nabla f_i(x_k),~\beta_k=\langle \sum_{i\in J_k} w_{i,k}\nabla f_i(x_k) ,x_k\rangle -\sum_{i\in J_k} \frac{w_{i,k}}{L_i}\|\nabla f_i(x_k)\|^2.$$ 
The separating halfspace (\ref{eq4.11}) contains the inner minimizer set $\bar{X}$ and strictly separates the iterate $x_k$ from the inner minimizer set $\bar{X}$, in the sense that $x_k\notin H_k$ and $\bar{X}\subseteq H_k$.

The SBBP method can thus be interpreted as performing the Bregman projection of $x_k$ onto $H_k$, namely,
$$x_{k+1}=\arg\min_{x\in\mathbb{R}^{ n}}D_{\psi}^{x_k^*}(x_k,x),~s.t.\langle\alpha_k ,x\rangle\leq \beta_k.$$
According to Lemma \ref{lemma2.2}, it is equivalent to the dual update
\begin{equation}
\label{eq4.5}
\begin{aligned}
x_{k+1}^*&=x_k^*-\hat{t}_k\cdot \alpha_k,\\
x_{k+1}&=\nabla\psi^*(x_{k+1}^*),
\end{aligned}
\end{equation} 
where the \textsl{exact projective stepsize} $\hat{t}_k$ is a solution of \begin{equation}
\label{eq4.6}
\min_{t\in \mathbb{R}} \psi^*(x^*-t\cdot \alpha_k)+t\cdot\beta_k.
\end{equation}
As demonstrated in \cite{lorenz2014linearized,schopfer2019linear,dai2024cut,tondji2023faster}, the update (\ref{eq4.5}) equipped with cheaper inexact stepsizes instead of exact stepsize $\hat{t}_k$ is also valid. 
For notational brevity, we define the adaptivity parameter as
\begin{equation}
\label{eq4.1}
L_{\operatorname{adapt}}^{k,\tau}=\frac{\|\sum_{i\in J_k} w_{i,k}\nabla f_i(x_k)\|^2}{\sum_{i\in J_k} w_{i,k}\|\nabla f_i(x_k)\|^2},
\end{equation}
which is well-defined provided that $\sum_{i\in J_k} w_{i,k}\|\nabla f_i(x_k)\|^2\neq 0$. The following theorem characterizes the Bregman projection and provides bounds for permissible stepsizes.


\begin{theorem}
	\label{th4.1}
Under Assumptions \ref{assume.1} and \ref{assume.projective}, let $\hat{t}_k$ be a solution to (\ref{eq4.6}). Denote $L_{\max}=\max_{i\in [m]}L_i$. Then the Bregman projection of $x_k$ onto the halfspace $H_k=\{x\in \mathbb{R}^n\mid \langle \alpha_k ,x\rangle\leq \beta_k \}$ is given by
	$$x_{k+1}=\nabla\psi^*(x_k^*-\hat{t}_k\cdot \alpha_k).$$
	Moreover, if the point $\tilde{x}_{k+1}=\nabla\psi^*(x_k^*-\tilde{t}_k\cdot \alpha_k)$ with \textsl{inexact projective stepsize} $\tilde{t}_k\in \left(0,\frac{\mu}{L_{\max}L_{\operatorname{adapt}}^{k,\tau}}\right]$ lies in the hyperplane $\{x\in \mathbb{R}^n\mid \langle \alpha_k ,x\rangle= \beta_k \}$, then it must be the unique point of the Bregman projection, that is,
	$$
	\nabla\psi^*(x_k^*-\hat{t}_k\cdot \alpha_k)=\nabla\psi^*(x_k^*-\tilde{t}_k\cdot \alpha_k)=\Pi_{H_k}^{x_k^*}(x_k).
	$$ 
\end{theorem}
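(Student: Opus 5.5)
The first claim follows from Lemma \ref{lemma2.2} applied with $u=\alpha_k$, $\beta=\beta_k$, once we know $\alpha_k\neq 0$ and $H_k$ is nonempty. Nonemptiness is clear: under Assumption \ref{assume.projective} every $x\in C=\bar X$ satisfies $\nabla f_i(x)=0$. Summing (\ref{eq3.1}) with $y=x_k$ and weights $w_{i,k}$ then gives $\langle \alpha_k,x\rangle\le\beta_k$, so $\bar X\subseteq H_k$.

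For $\alpha_k\neq0$ we need the standing assumption that $x_k\notin \bar X$, or at least that some $\nabla f_i(x_k)\neq0$ with $i\in J_k$. This is implicit in the definition of $L_{\operatorname{adapt}}^{k,\tau}$. If $\alpha_k=0$ but some $\nabla f_i(x_k)\ne0$, then $\beta_k<0$ and $H_k=\emptyset$, contradicting $\bar X\subseteq H_k$. Hence $\alpha_k\neq0$, and Lemma \ref{lemma2.2} yields $x_{k+1}=\nabla\psi^*(x_k^*-\hat t_k\alpha_k)=\Pi^{x_k^*}_{H_k}(x_k)$.

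For the second claim we verify the variational inequality of Lemma \ref{lemma2.1} for $\hat x=\tilde x_{k+1}$ with admissible subgradient $\hat x^*=x_k^*-\tilde t_k\alpha_k\in\partial\psi(\tilde x_{k+1})$. That this is a subgradient follows from the conjugate equivalence lemma, since $\tilde x_{k+1}=\nabla\psi^*(\hat x^*)$. The hypothesis gives $\tilde x_{k+1}\in H_k$, as it lies on the boundary hyperplane. For any $y\in H_k$,
\[
\langle \hat x^*-x_k^*,y-\tilde x_{k+1}\rangle=-\tilde t_k\big(\langle\alpha_k,y\rangle-\langle\alpha_k,\tilde x_{k+1}\rangle\big)=\tilde t_k\big(\beta_k-\langle\alpha_k,y\rangle\big)\ge0,
\]
using $\langle\alpha_k,\tilde x_{k+1}\rangle=\beta_k$ and $\tilde t_k>0$. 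Lemma \ref{lemma2.1} then identifies $\tilde x_{k+1}=\Pi^{x_k^*}_{H_k}(x_k)$. Uniqueness of the Bregman projection, which holds by strong convexity, gives equality with $\nabla\psi^*(x_k^*-\hat t_k\alpha_k)$.

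The stepsize bound $\tilde t_k\le \mu/(L_{\max}L_{\operatorname{adapt}}^{k,\tau})$ is not really needed for this implication; positivity suffices. The main point needing care is to explain the bound's role. I would remark that it guarantees $\langle\alpha_k,\tilde x_{k+1}\rangle\ge\beta_k$, i.e.\ no overshoot past the hyperplane. This follows from $1/\mu$-Lipschitz continuity of $\nabla\psi^*$:
\[
\langle\alpha_k,x_k-\tilde x_{k+1}\rangle\le \tilde t_k\|\alpha_k\|^2/\mu\le \sum_{i\in J_k}\frac{w_{i,k}}{L_{\max}}\|\nabla f_i(x_k)\|^2\le \sum_{i\in J_k}\frac{w_{i,k}}{L_i}\|\nabla f_i(x_k)\|^2 .
\]
By continuity in $t$, the exact step therefore satisfies $\hat t_k\ge\tilde t_k$. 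This is consistent with the statement but not required for its proof.
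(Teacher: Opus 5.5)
Your proof is correct, but for the second claim it takes a different and shorter route than the paper.

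The paper never uses the equality $\langle\alpha_k,\tilde x_{k+1}\rangle=\beta_k$ explicitly. It verifies the variational inequality of Lemma~\ref{lemma2.1} by writing $\langle\tilde x_{k+1}^*-x_k^*,x-\tilde x_{k+1}\rangle=\tilde t_k(\langle\alpha_k,\tilde x_{k+1}\rangle-\langle\alpha_k,x\rangle)$ and bounding $\langle\alpha_k,x\rangle\le\beta_k$. It then uses Cauchy--Schwarz, the $1/\mu$-Lipschitz continuity of $\nabla\psi^*$, and $L_i\le L_{\max}$ to get nonnegativity whenever $\tilde t_k\le\mu/(L_{\max}L_{\operatorname{adapt}}^{k,\tau})$. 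That chain is exactly your ``no overshoot'' estimate $\langle\alpha_k,\tilde x_{k+1}\rangle\ge\beta_k$. In the paper, the hyperplane hypothesis enters only tacitly, to supply the membership $\tilde x_{k+1}\in H_k$ that Lemma~\ref{lemma2.1} requires.

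You use the hypothesis at full strength instead. The variational inequality then becomes a one-line complementary-slackness check valid for every $\tilde t_k>0$, which shows that the stepsize bound is unnecessary for the stated implication. The paper's route gives a different variant in return: under the bound, mere membership $\tilde x_{k+1}\in H_k$ already suffices, since it forces $\tilde x_{k+1}$ onto the hyperplane. It also shows where the threshold comes from.

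You are also more careful than the paper on the first claim. The paper reads it off Lemma~\ref{lemma2.2} without checking $\alpha_k\neq0$ or $H_k\neq\emptyset$. Your observation fills that in: $\alpha_k=0$ with some $\nabla f_i(x_k)\neq0$ would give $\beta_k<0$, hence $H_k=\emptyset$, contradicting $\emptyset\neq\bar X\subseteq H_k$.

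One small point in your closing remark. The conclusion $\hat t_k\ge\tilde t_k$ is true, but continuity is not what gives it. The map $g(t)=\langle\alpha_k,\nabla\psi^*(x_k^*-t\alpha_k)\rangle$ can be constant on intervals (e.g.\ for $\nabla\psi^*=S_\lambda$), so (\ref{eq4.6}) can have a whole interval of solutions. What works is this: $g$ is nonincreasing, and the strict form of your estimate gives $g(t)>\beta_k$ for every $t<\mu/(L_{\max}L_{\operatorname{adapt}}^{k,\tau})$. Hence every solution of (\ref{eq4.6}) lies at or above that threshold.
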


The following lemma establishes that Algorithm \ref{al1} achieves a monotonic decrease in terms of Bregman distance with either exact or inexact projective stepsizes $t_k$.

\begin{lemma}[Basic descent lemma]
	\label{lemma5.1}
Under Assumptions \ref{assume.1} and \ref{assume.projective}, let $(x_k,x_k^*)$ be the $k$-th iterate generated by Algorithm \ref{al1} with the stepsize $t_k$.
	\begin{enumerate}[(a)]
	\item If the stepsize $t_k$ lies in the interval $\left(0, \frac{2\mu}{L_{\max}L_{\operatorname{adapt}}^{k,\tau}}\right)$, then for any $x\in H_k$ we have
	$$D_{\psi}^{x_{k+1}^*}(x_{k+1},x)
		\leq 
		D_{\psi}^{x_{k}^*}(x_{k},x)
		-\left(\frac{t_k}{L_{\max}}-\frac{t_k^2}{2\mu}L_{\operatorname{adapt}}^{k,\tau}\right) \sum_{i\in J_k} w_{i,k}\|\nabla f_i(x_k)\|^2.$$
	\item If $t_k$ is the \textsl{exact projective stepsize} given by (\ref{eq4.6}) or the \textsl{block adaptive extrapolated stepsize}
	\begin{equation}
	\label{eq6.9}
	t_k=\frac{\mu}{L_{\max}L_{\operatorname{adapt}}^{k,\tau}},
	\end{equation}
	then for any $x\in H_k$ we have
	$$D_{\psi}^{x_{k+1}^*}(x_{k+1},x)
	\leq 
	D_{\psi}^{x_{k}^*}(x_{k},x)
	-\frac{\mu}{2L_{\max}^2L_{\operatorname{adapt}}^{k,\tau}} \sum_{i\in J_k} w_{i,k}\|\nabla f_i(x_k)\|^2.$$
	\end{enumerate}
\end{lemma}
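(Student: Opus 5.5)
The argument for part (a) starts from the one-step Bregman expansion already derived before (\ref{eq4.2}). I will use $x_{k+1}^*=x_k^*-t_k\alpha_k$ and the $1/\mu$-Lipschitz continuity of $\nabla\psi^*$ (the descent-lemma form of the conjugate bound). For any $x\in\mathbb{R}^n$ this gives
\[
D_{\psi}^{x_{k+1}^*}(x_{k+1},x)\leq D_{\psi}^{x_{k}^*}(x_{k},x)-t_k\langle \alpha_k,x_k-x\rangle+\frac{t_k^2}{2\mu}\|\alpha_k\|^2 .
\]
Unlike in (\ref{eq4.2}), I will not pass to function values via convexity. Instead I will use the halfspace structure. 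For $x\in H_k$ we have $\langle\alpha_k,x\rangle\le\beta_k$, so
\[
\langle\alpha_k,x_k-x\rangle\ge\langle\alpha_k,x_k\rangle-\beta_k=\sum_{i\in J_k}\frac{w_{i,k}}{L_i}\|\nabla f_i(x_k)\|^2\ge\frac{1}{L_{\max}}\sum_{i\in J_k}w_{i,k}\|\nabla f_i(x_k)\|^2 .
\]
By definition (\ref{eq4.1}), $\|\alpha_k\|^2=L_{\operatorname{adapt}}^{k,\tau}\sum_{i\in J_k}w_{i,k}\|\nabla f_i(x_k)\|^2$. Substituting both facts yields exactly the inequality in (a). The restriction $t_k\in(0,2\mu/(L_{\max}L_{\operatorname{adapt}}^{k,\tau}))$ is only what makes the coefficient positive, so the estimate is a genuine descent. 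I will treat the degenerate case $\sum_{i\in J_k} w_{i,k}\|\nabla f_i(x_k)\|^2=0$ separately: there $\alpha_k=0$, the iterate does not move, and the claim is trivial.

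For part (b) with the block adaptive extrapolated stepsize (\ref{eq6.9}), I plug $t_k=\mu/(L_{\max}L_{\operatorname{adapt}}^{k,\tau})$ into (a). This is the midpoint of the admissible interval and maximizes the quadratic $t/L_{\max}-t^2L_{\operatorname{adapt}}^{k,\tau}/(2\mu)$. The coefficient becomes $\mu/(L_{\max}^2L_{\operatorname{adapt}}^{k,\tau})-\mu/(2L_{\max}^2L_{\operatorname{adapt}}^{k,\tau})=\mu/(2L_{\max}^2L_{\operatorname{adapt}}^{k,\tau})$, as claimed.

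The exact projective stepsize is the main obstacle, since $\hat t_k$ need not lie in the interval of (a). My plan is to use the Bregman projection characterization instead. By Theorem \ref{th4.1}, $x_{k+1}=\Pi_{H_k}^{x_k^*}(x_k)$, and $x_{k+1}^*=x_k^*-\hat t_k\alpha_k$ should be an admissible subgradient, because it arises from the optimality condition of (\ref{eq4.6}) as in Lemma \ref{lemma2.2}. Lemma \ref{lemma2.1} then gives, for all $x\in H_k$,
\[
D_{\psi}^{x_{k+1}^*}(x_{k+1},x)\le D_{\psi}^{x_k^*}(x_k,x)-D_{\psi}^{x_k^*}(x_k,x_{k+1}).
\]
It therefore suffices to lower bound $D_{\psi}^{x_k^*}(x_k,x_{k+1})=\operatorname{dist}_\psi^{x_k^*}(x_k,H_k)^2$. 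The projection minimizes $D_\psi^{x_k^*}(x_k,\cdot)$ over $H_k$, so it does at least as well as any other point of $H_k$. Rather than comparing with a particular point, I will use the dual representation. Since $\beta_k\ge\langle\alpha_k,y\rangle$ for $y\in H_k$, Fenchel duality gives
\[
\min_{y\in H_k}D_\psi^{x_k^*}(x_k,y)=\psi^*(x_k^*)-\min_{t\ge0}\{\psi^*(x_k^*-t\alpha_k)+t\beta_k\}.
\]
I then bound $\psi^*(x_k^*-t\alpha_k)+t\beta_k\le\psi^*(x_k^*)-t(\langle\alpha_k,x_k\rangle-\beta_k)+\frac{t^2}{2\mu}\|\alpha_k\|^2$, which uses $\nabla\psi^*(x_k^*)=x_k$. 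Evaluating at $t=\mu/(L_{\max}L_{\operatorname{adapt}}^{k,\tau})$ shows that the projection distance is at least the same quantity $\frac{\mu}{2L_{\max}^2L_{\operatorname{adapt}}^{k,\tau}}\sum_{i\in J_k} w_{i,k}\|\nabla f_i(x_k)\|^2$, which completes (b). The step I expect to require the most care is justifying the duality identity and the admissibility of $x_{k+1}^*$. If that becomes awkward, a fallback is to note that the duality bound is exactly the lower bound on the Bregman distance reduction already computed in (a) with that particular $t$.
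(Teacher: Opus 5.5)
Your proof is correct. For part (a) and for the extrapolated stepsize in (b) it matches the paper: expand $\psi^*(x_k^*-t_k\alpha_k)$ via the $1/\mu$-Lipschitz gradient of $\psi^*$, replace $\langle\alpha_k,x\rangle$ by $\beta_k$ for $x\in H_k$, use $\langle\alpha_k,x_k\rangle-\beta_k=\sum_{i\in J_k}\frac{w_{i,k}}{L_i}\|\nabla f_i(x_k)\|^2$, and plug in $t=\mu/(L_{\max}L_{\operatorname{adapt}}^{k,\tau})$.

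For the exact stepsize you work on the primal side, whereas the paper stays on the dual side. With $g(t)=\psi^*(x_k^*-t\alpha_k)+t\beta_k$, the paper writes $D_{\psi}^{x_{k+1}^*}(x_{k+1},x)=g(\hat t_k)+\hat t_k(\langle\alpha_k,x\rangle-\beta_k)+\psi(x)-\langle x_k^*,x\rangle\le g(t)+\psi(x)-\langle x_k^*,x\rangle$ for every $t$. This uses only $\hat t_k\ge 0$, $x\in H_k$ and the minimality of $\hat t_k$; the same smoothness bound is then applied to $g(t)$. Your route applies the Pythagorean inequality of Lemma \ref{lemma2.1} with the admissible subgradient $x_{k+1}^*$ and then lower-bounds $D_\psi^{x_k^*}(x_k,x_{k+1})$. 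It is the same estimate in disguise, since $\langle\alpha_k,x_{k+1}\rangle=\beta_k$ gives $D_\psi^{x_k^*}(x_k,x_{k+1})=\psi^*(x_k^*)-g(\hat t_k)$.

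What each buys: the paper's version is shorter and needs neither Lemma \ref{lemma2.1} nor admissibility. Yours isolates the bound $D_{\psi}^{x_{k+1}^*}(x_{k+1},x)\le D_{\psi}^{x_k^*}(x_k,x)-\operatorname{dist}_{\psi}^{x_k^*}(x_k,H_k)^2$, which shows directly why the exact step dominates any inexact one.

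On the two points you flagged:
\begin{itemize}
\item Only weak duality is needed. The Fenchel--Young bound $\psi(y)+\psi^*(x_k^*-t\alpha_k)\ge\langle x_k^*,y\rangle-t\beta_k$ for $y\in H_k$ and $t\ge0$ suffices, so no duality identity has to be proved.
\item Admissibility of $x_{k+1}^*$ amounts to $\hat t_k(\beta_k-\langle\alpha_k,y\rangle)\ge 0$ for $y\in H_k$. This follows from $g'(\hat t_k)=0$ and $\hat t_k\ge0$. The sign holds because $g$ is convex with $g'(0)=\beta_k-\langle\alpha_k,x_k\rangle<0$ in the nondegenerate case you already set aside. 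The paper uses this same sign fact, tacitly, to drop the term $\hat t_k(\langle\alpha_k,x\rangle-\beta_k)$.
\end{itemize}
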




\section{Convergence analysis}
\label{sec4}
In this section, we present the main convergence results for Algorithm \ref{al1} equipped with either the \textsl{decreasing mirror stochastic Polyak stepsize} or \textsl{projective stepsizes}.
All detailed proofs omitted in this section are deferred to Appendix \ref{secA2}.

\subsection{The key assumption}
\label{sec4.1}
We now outline the fundamental assumptions, definitions and the sampling method used in the convergence analysis.
In order to establish linear convergence rates, we impose the following growth condition on the inner function $F$ in terms of the Bregman distance generated by $\psi$.

\begin{assumption}[Bregman distance growth condition (BDGC) \cite{zhang2021proximal,bauschke2019linear}]
	\label{assume.2}
	Let $\bar{X}=\arg\min_{x\in\mathbb{R}^n}F(x)$ and $\bar{F}=\min_{x\in\mathbb{R}^n}F(x)$. Assume there exists a constant $\gamma>0$ such that for every $x\in \operatorname{int} \operatorname{dom}\psi$ and $x^*\in\partial\psi(x)$,
	$$F(x)-\overline{F}
	\geq
	\gamma \inf_{z\in \bar{X}}D_{\psi}^{x^*}(x,z).$$
\end{assumption}


\begin{remark}
We relate BDGC to several regularity conditions commonly used in the convergence analysis of first-order algorithms.
\begin{enumerate}[(a)]
\item Under Assumption \ref{assume.1}, the BDGC generalizes the quadratic growth condition (QGC) in \cite{necoara2019linear,zhang2020new}. Specifically, if $\psi$ is $\mu$-strongly convex, the BDGC implies that $F$ satisfies the QGC with constant $\gamma\mu$, i.e., for $x^*\in\partial \psi(x)$,
\begin{equation*}
\begin{aligned}
F(x)-\overline{F}
&\geq
\gamma \inf_{z\in \bar{X}}D_{\psi}^{x^*}(x,z)\geq \frac{\gamma\mu}{2}\operatorname{dist}(x,\bar{X})^2.
\end{aligned}
\end{equation*}
\item Under Assumption \ref{assume.1}, if the convex function $F$ satisfies the BDGC, then it also satisfies the Polyak-\L{}ojasiewicz (PL) inequality:
\begin{equation}
\label{eq4:9}
\sqrt{\frac{\gamma\mu}{2}}\cdot\sqrt{F(x)-\bar{F}}\leq \|\nabla F(x)\|,~\forall x\in\mathbb{R}^n,
\end{equation}
which has been discussed in Theorem 4.4 of \cite{dai2024cut}.
\item In the Euclidean setting where $\psi(x)=\frac{1}{2}\|x\|^2,~F(x)=\frac{1}{2}\mathbb{E}[\operatorname{dist}(x,C_i)^2]$, if the intersection $C$ is nonempty, the BDGC reduces to the classical linear regularity assumption in \cite{necoara2022stochastic}: $$\mathbb{E}[\operatorname{dist}(x,C_i)^2]\geq \gamma \operatorname{dist}(x,C)^2.$$
Crucially, the BDGC imposed on $F$ does not require the intersection $C$ to be nonempty, which allows the SBBP method to address inconsistent CFPs. 
\end{enumerate}
\end{remark}


\begin{example}
	\label{example2}
A special instance of the parameter $\gamma$ was investigated in \cite{schopfer2019linear} for consistent linear systems $Ax=b$ with $A\in\mathbb{R}^{m\times n}$ and $b\in\mathbb{R}^{m}$, where the matrix $A$ is assumed to be normalized by rows. Consider the sampling method $\textbf{P}(i)=\frac{1}{m}$, $f_i(x)=\frac{1}{2}(a_i^Tx-b_i)^2$ and $F(x)=\mathbb{E}[f_i(x)]=\frac{1}{2m}\|Ax-b\|_2^2$. The stochastic reformulation (\ref{eq3.3}) then specializes to the regularized Basis Pursuit problem
\begin{equation}
\label{eq4:20}
\min_{x\in\mathbb{R}^{n}}
\psi(x)=\lambda\|x\|_1+\frac{1}{2}\|x\|^2,~s.t.~Ax=b,
\end{equation}
see e.g. \cite{cai2009convergence,schopfer2019linear}.
Let $\hat{x}$ be the solution of the linear system $Ax=b$, and let $\tilde{\sigma}_{\min}(A)=\min\{\sigma_{\min}(A_{:,J})\mid J\subset \{1,\ldots,n\},A_{:,J}\neq 0\}$ and $|\hat{x}|_{\min}=\min\{|\hat{x}_j|\mid j\in \operatorname{supp}(\hat{x})\}$. As established in Lemma 3.1 of \cite{schopfer2019linear}, for all $x\in\mathbb{R}^n$ with $\partial\psi(x)\cap\mathcal{R}(A^T)\neq \emptyset$ and all $x^*=A^T y\in\partial\psi(x)\cap\mathcal{R}(A^T)$, it holds that
$\|Ax-b\|_2^2\geq \eta\cdot D_{\psi}^{x^*}(x,\hat{x})$
with
\begin{equation}
\label{eq6:13}
\eta=
\tilde{\sigma}^2_{\min}(A)\cdot\frac{|\hat{x}|_{\min}}{|\hat{x}|_{\min}+2\lambda}.
\end{equation}
The above inequality implies that $F$ satisfies BDGC with constant $\gamma=\frac{\eta}{2m}$.
\end{example}

To characterize the variance of the stochastic gradients, we introduce two regularity constants motivated by the analysis in \cite{necoara2022stochastic}.
We then specify the sampling rule and introduce an interpolation proxy used in the convergence analysis.

\begin{definition}[stochastic smooth regularity constant]
	\label{def1}
Let the index $i$ be sampled from the probability distribution $\textbf{P}$. Define 
	$$
	L=\sup_{x\in\mathbb{R}^{n}}\frac{\|\mathbb{E}[\nabla f_i(x)]\|^2}{\mathbb{E}[\|\nabla f_i(x)\|^2]}
	$$
	with the convention $0/0=0$.
In particular, Jensen's inequality yields $L\in (0,1]$.
\end{definition}

\begin{definition}[block smooth regularity constant]
	\label{def2}
Let $\{x_k\}_{k\geq 0}$ be generated by Algorithm \ref{al1}.
Let the mini-batch $J_k$ be sampled from the block distribution $\textbf{P}$ with $|J_k|=\tau$, and let weights $w_{i,k}$ satisfy $\sum_{i\in J_k}w_{i,k}=1$ and $w_{i,k}>0$. Define
$$
L_{\tau}^{\operatorname{block}}=\sup_{k\geq 0}\frac{\|\sum_{i\in J_k} w_{i,k}\nabla f_i(x_k)\|^2}{\sum_{i\in J_k} w_{i,k}\|\nabla f_i(x_k)\|^2}
$$	
with the convention $0/0=0$.
In particular, Jensen's inequality yields $L_{\tau}^{\operatorname{block}}\in (0,1]$.
\end{definition}
 
The constant $L_{\tau}^{\operatorname{block}}$ serves as a mini-batch analogue of $L$ and provides an upper bound for $L_{\operatorname{adapt}}^{k,\tau}$ defined in (\ref{eq4.1}), ensuring $L_{\operatorname{adapt}}^{k,\tau}\leq  L_{\tau}^{\operatorname{block}}\leq 1$ for all $k\geq 0$.


\begin{assumption}[Sampling rule]\label{assume.sampling}
Fix a batch size $\tau\in [1,m]$. At each iteration $k\geq 0$, a mini-batch $J_k\subset [m]$ is sampled with $|J_k|=\tau$ and the following probability distribution
$$
\textbf{P}(i)=\frac{1}{m},~ p_i=\textbf{P}(i\in J_k)=\frac{\tau}{m},~ \forall i\in [m],
$$
and we assign uniform weights $w_{i,k}=\frac{1}{\tau}$ for all $i\in J_k$.
The sampling distribution of $J_k$ is independent of the past $\mathcal{F}_k=\{J_0,\ldots,J_{k-1}\}$.
\end{assumption}
Assumption \ref{assume.sampling} covers several sampling schemes, including uniform sampling without replacement and partition sampling \cite{necoara2022stochastic}, and it guarantees the unbiasedness required in the following convergence analysis
$$
\mathbb{E}\!\left[\sum_{i\in J_k} w_{i,k} f_i(x)\right]=F(x),~ \forall x\in\mathbb{R}^n.
$$

\begin{definition}\label{def:sigma}
Let $l_J^*$ be any lower bound satisfying $l_J^*\leq \inf_{x\in\mathbb{R}^n}\sum_{i\in J}w_if_i(x)$. Fix any $\hat{x}\in\bar{X}$, and define
\begin{equation}
\label{eq4:8}
\sigma_{\tau}^2=\mathbb{E}\left[\sum_{i\in J}w_{i}f_i(\hat{x})-l_{J}^*\right].
\end{equation}
\end{definition}
As stated in \cite{loizou2021stochastic,d2021stochastic,d2021stochastic2}, we say that the interpolation condition holds when there exists $\hat{x}\in\bar{X}$ such that $f_i(\hat{x})=\inf_{x\in\mathbb{R}^n} f_i(x)$ for all $i\in [m]$. In this case, we have $\sigma_{\tau}^2=0$.

\subsection{Convergence of SBBP with DecmSPS}
\label{sec4.2}

We first establish the sublinear convergence of SBBP with DecmSPS on the average function value under Assumption \ref{assume.1}.

\begin{theorem}[sublinear convergence]
	\label{th4:1}
Let $\{(x_{k},x_{k}^*)\}_{k\geq 0}$ be generated by Algorithm \ref{al1} with $t_k$ given by DecmSPS (\ref{eq4:1}). Let $\{\lambda_k\}$ be nonincreasing and satisfy $0<\lambda_k\leq 1$. Under Assumption \ref{assume.1} and Assumption \ref{assume.sampling}, for any $\hat{x}\in\bar{X}$ we have
	\begin{equation*}
	\mathbb{E}\left[F(\bar{x}_{K})-\bar{F}\right]
	\leq
	\frac{1}{K}\left(\frac{2D}{\lambda_{K-1}\tilde{L}}+\sigma_{\tau}^2 \sum_{k=0}^{K-1}\lambda_k\right),
	\end{equation*}
	where $\bar{x}_{K}=\frac{1}{K}\sum_{k=0}^{K-1}x_k$, $D=\max_{k\in [K-1]} D_{\psi}^{x_{k}^*}(x_{k},\hat{x})<\infty$, and $\tilde{L}=\min\left\{\frac{\mu}{2L_{\max}},\frac{\gamma_b}{\lambda_0} \right\}$.
\end{theorem}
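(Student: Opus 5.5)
Our approach follows the DecSPS analysis of \cite{orvieto2022dynamics}, adapted to the mirror setting through the one-step inequality (\ref{eq4.2}). Write $f_{J_k}(x)=\sum_{i\in J_k}w_{i,k}f_i(x)$ and $g_k=\nabla f_{J_k}(x_k)$. Take $x=\hat{x}\in\bar{X}$ in (\ref{eq4.2}). The first step is to absorb the quadratic term using the definition of DecmSPS. By (\ref{eq4:1}) we have $t_k\le \lambda_k\mu\,(f_{J_k}(x_k)-l^*_{J_k})/\|g_k\|^2$, and hence
\[
\frac{t_k^2}{2\mu}\|g_k\|^2\le \frac{\lambda_k t_k}{2}\bigl(f_{J_k}(x_k)-l^*_{J_k}\bigr)\le \frac{t_k}{2}\bigl(f_{J_k}(x_k)-l^*_{J_k}\bigr),
\]
where the last inequality uses $\lambda_k\le 1$. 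We then split $f_{J_k}(x_k)-l^*_{J_k}=(f_{J_k}(x_k)-f_{J_k}(\hat{x}))+(f_{J_k}(\hat{x})-l^*_{J_k})$. Substituting into (\ref{eq4.2}) gives
\[
D_{k+1}\le D_k-\frac{t_k}{2}\bigl(f_{J_k}(x_k)-f_{J_k}(\hat{x})\bigr)+\frac{t_k}{2}\bigl(f_{J_k}(\hat{x})-l^*_{J_k}\bigr),
\]
where $D_k:=D_\psi^{x_k^*}(x_k,\hat{x})$. The last bracket is nonnegative, since $l^*_J$ is a lower bound.

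The second step is to divide by $t_k/2>0$ and then use the upper bound from Lemma \ref{lemma4.1}. The bound $t_k\le \frac{\gamma_b}{\lambda_0}\lambda_k$ is too crude here, because it would bring in the constant $\gamma_b/\lambda_0$ rather than $1$. Instead I go back to (\ref{eq4:1}) and use $t_k\le \lambda_k\cdot(\ldots)$ directly in the bound on the quadratic term. Keeping the factor $\lambda_k$ there yields
\[
\frac{t_k^2}{2\mu}\|g_k\|^2\le \frac{\lambda_k t_k}{2}\bigl(f_{J_k}(x_k)-f_{J_k}(\hat{x})\bigr)+\frac{\lambda_k t_k}{2}\bigl(f_{J_k}(\hat{x})-l^*_{J_k}\bigr).
\]
After dividing by $t_k$, this gives
\[
f_{J_k}(x_k)-f_{J_k}(\hat{x})\le \frac{2(D_k-D_{k+1})}{t_k}+\lambda_k\bigl(f_{J_k}(\hat{x})-l^*_{J_k}\bigr),
\]
where I used $1-\lambda_k/2\ge 1/2$. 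I expect this constant bookkeeping to be the delicate point. The error term must come out with coefficient exactly $\lambda_k$, which matches the statement.

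The third step is to take expectations. The quantity $t_k$ depends on $J_k$, so I cannot simply factor it out. I therefore first sum the pathwise inequality over $k=0,\dots,K-1$ and telescope using the monotonicity $t_{k+1}\le t_k$ from Lemma \ref{lemma4.1}. Writing
\[
\sum_k \frac{D_k-D_{k+1}}{t_k}\le \frac{D_0}{t_0}+\sum_{k=1}^{K-1}D_k\Bigl(\frac{1}{t_k}-\frac{1}{t_{k-1}}\Bigr),
\]
and noting that each bracket is nonnegative, the sum is bounded by $D/t_{K-1}\le D/(\lambda_{K-1}\tilde{L})$, using the lower bound of Lemma \ref{lemma4.1}. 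Here $D$ is the maximum of the $D_k$; it is finite pathwise, and the statement treats it as a constant.

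Now the left-hand side and the error terms no longer carry $t_k$. Since $J_k$ is independent of $\mathcal{F}_k$ and $x_k$ is $\mathcal{F}_k$-measurable, Assumption \ref{assume.sampling} gives
\[
\mathbb{E}[f_{J_k}(x_k)-f_{J_k}(\hat{x})]=\mathbb{E}[F(x_k)]-\bar F
\quad\text{and}\quad
\mathbb{E}[f_{J_k}(\hat{x})-l^*_{J_k}]=\sigma_\tau^2 .
\]
Finally, I divide by $K$ and apply Jensen's inequality to obtain $F(\bar{x}_K)\le\frac1K\sum_k F(x_k)$, which yields the claimed bound.
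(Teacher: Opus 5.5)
Your route is the paper's own. You absorb the quadratic term in (\ref{eq4.2}) via the DecmSPS cap, divide by $t_k$, telescope using $t_{k+1}\le t_k$ and $t_{K-1}\ge \lambda_{K-1}\tilde L$, then take expectations and apply Jensen.

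\textbf{The gap.} Write $a_k=f_{J_k}(x_k)-f_{J_k}(\hat x)$ and $b_k=f_{J_k}(\hat x)-l^*_{J_k}\ge 0$. Your second step passes from $(1-\frac{\lambda_k}{2})a_k\le \frac{D_k-D_{k+1}}{t_k}+\frac{\lambda_k}{2}b_k$ to $\frac12 a_k\le\cdots$ by ``using $1-\lambda_k/2\ge 1/2$''. That is only valid when $a_k\ge 0$. Pathwise, $a_k$ has no sign, because $\hat x$ minimizes $F$, not the sampled $f_{J_k}$; this is exactly the non-interpolated regime the theorem targets. The resulting per-step inequality $a_k\le \frac{2(D_k-D_{k+1})}{t_k}+\lambda_k b_k$ is false in general. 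Take $n=1$, $\psi=\frac12 x^2$, $f_1=\frac12(x-1)^2$, $f_2=\frac12(x+1)^2$, $\hat x=0$, $l^*_J=0$, $\lambda_k\equiv 0.1$, $\gamma_b=1$, $x_0=0.5$, $J_0=\{1\}$. Then $t_0=0.05$ and $a_0=-0.375$, while the right-hand side equals $-0.4625$.

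The paper's proof makes the same pathwise reduction (``by a simple reformulation with $0<\lambda_k\le 1$''), so you have reproduced it faithfully, but the step is not correct as written. Your first display, which applies $\lambda_k\le 1$ to the nonnegative quantity $f_{J_k}(x_k)-l^*_{J_k}$, is valid. As you noticed, though, it leaves the noise term with coefficient $1$, so the bound does not decay.

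\textbf{The fix.} Postpone the use of $1-\lambda_k/2\ge 1/2$ until after taking expectations.
\begin{itemize}
\item Keep $(1-\frac{\lambda_k}{2})a_k$ on the left and divide by $t_k$ only. Dividing by $t_k(1-\lambda_k/2)$ would break the telescoping, since that product need not be monotone.
\item Sum and telescope exactly as you do, to get $\sum_{k=0}^{K-1}(1-\frac{\lambda_k}{2})a_k\le \frac{D}{\lambda_{K-1}\tilde L}+\sum_{k=0}^{K-1}\frac{\lambda_k}{2}b_k$.
\item Now take expectations. The $\lambda_k$ are deterministic, and $\mathbb{E}[a_k]=\mathbb{E}\big[\mathbb{E}[a_k\mid\mathcal{F}_k]\big]=\mathbb{E}[F(x_k)]-\bar F\ge 0$. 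So it is now legitimate to write $(1-\frac{\lambda_k}{2})\mathbb{E}[a_k]\ge \frac12\mathbb{E}[a_k]$.
\item Multiplying by $2$, dividing by $K$ and applying Jensen gives the stated estimate.
\end{itemize}

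Two further points, the first of which you already flagged. Since $D$ is random, the argument really yields the bound with $\mathbb{E}[D]$, or with any deterministic almost-sure bound on $D$, in place of $D$. Also, the maximum defining $D$ must include $k=0$ for the step $\frac{D_0}{t_0}+\sum_{k\ge1}D_k\big(\frac{1}{t_k}-\frac{1}{t_{k-1}}\big)\le \frac{D}{t_{K-1}}$; the paper's index set $[K-1]$ omits it.
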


\begin{remark}
\begin{enumerate}[(a)]
\item While we assume $0<\lambda_k\leq 1$ for brevity, Theorem \ref{th4:1} remains valid under the relaxed condition $0<\lambda_k< 2$.
		\item In the special case of $\psi(x)=\frac{1}{2}\|x\|^2$, SBBP reduces to the stochastic gradient descent (SGD) method, and DecmSPS reduces to the decreasing stochastic Polyak stepsize (DecSPS). Furthermore, Theorem \ref{th4:1} recovers the sublinear convergence of SGD with decreasing stochastic Polyak stepsize of Theorem 3 in \cite{orvieto2022dynamics}. Therefore, Theorem \ref{th4:1} extends the results of the stochastic gradient descent method in \cite{orvieto2022dynamics} to the more general stochastic mirror descent method.
\end{enumerate}
\end{remark}


\begin{corollary}
	\label{coro4:1}
	Under the assumptions of Theorem \ref{th4:1}, for $\hat{x}\in\bar{X}$ we have the following results.
	\begin{enumerate}[(a)]
	\item In the interpolated case ($\sigma_{\tau}^2=0$), let $\lambda_k=\frac{1}{c},k\in\mathbb{N}$ with $c\geq 1$, yielding an $\mathcal{O}(1/K)$ convergence rate
	$$ \mathbb{E}[F(\bar{x}_{K})-\bar{F}]
	\leq \frac{2Dc}{K\tilde{L}}.$$ 
	\item In the non-interpolated case ($\sigma_{\tau}^2>0$), let $\lambda_k=\frac{1}{c\sqrt{k+1}},k\in\mathbb{N}$ with $c>0$ yielding an $\mathcal{O}(1/\sqrt{K})$ convergence rate
	$$\mathbb{E}[F(\bar{x}_{K})-\bar{F}]
	\leq
	\frac{2cD/\tilde{L}+2\sigma_{\tau}^2/c}{\sqrt{K}}.$$
	\end{enumerate}
\end{corollary}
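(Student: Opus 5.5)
The corollary follows by substituting the two choices of $\lambda_k$ into the bound of Theorem \ref{th4:1},
\[
\mathbb{E}[F(\bar{x}_K)-\bar F]\le \frac{1}{K}\Big(\frac{2D}{\lambda_{K-1}\tilde L}+\sigma_\tau^2\sum_{k=0}^{K-1}\lambda_k\Big),
\]
after checking that each choice meets the theorem's hypotheses. Each choice must be nonincreasing with $0<\lambda_k\le 1$. Beyond that, only the two terms need to be simplified. The bound of Theorem \ref{th4:1} is already stated with the constant $D$, whose finiteness is asserted there, so no further control of the iterates is needed.

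\emph{Case (a).} Take $\lambda_k=1/c$ with $c\ge 1$. This is constant, hence nonincreasing, and lies in $(0,1]$, so Theorem \ref{th4:1} applies. Since $\sigma_\tau^2=0$, the variance term drops out. The remaining term is $\frac{1}{K}\cdot\frac{2D}{(1/c)\tilde L}=\frac{2Dc}{K\tilde L}$, which is the claimed bound. Note that $\tilde L=\min\{\mu/(2L_{\max}),\gamma_b c\}$ depends on $c$ through $\lambda_0$; this does not change the form of the statement.

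\emph{Case (b).} Take $\lambda_k=\frac{1}{c\sqrt{k+1}}$. This sequence is decreasing. It satisfies $\lambda_k\le 1$ provided $c\ge 1$, or by invoking the remark that $\lambda_k<2$ suffices. I expect the main obstacle to be this hypothesis when $c$ is small, since the statement only says $c>0$. I would handle it by citing the relaxed condition, or else by noting that the proof of Theorem \ref{th4:1} only uses $\lambda_k$ bounded, with constants absorbed.

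With the hypotheses in place, the first term uses $\lambda_{K-1}=\frac{1}{c\sqrt K}$. It becomes $\frac{1}{K}\cdot\frac{2Dc\sqrt K}{\tilde L}=\frac{2cD/\tilde L}{\sqrt K}$.

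For the sum, compare it with an integral:
\[
\sum_{k=0}^{K-1}\frac{1}{\sqrt{k+1}}=\sum_{j=1}^{K}\frac{1}{\sqrt j}\le \int_0^K s^{-1/2}\,ds=2\sqrt K.
\]
This holds because $j^{-1/2}\le\int_{j-1}^{j}s^{-1/2}ds$. Hence $\sigma_\tau^2\sum_k\lambda_k\le 2\sigma_\tau^2\sqrt K/c$. Dividing by $K$ gives $\frac{2\sigma_\tau^2/c}{\sqrt K}$. Adding the two pieces yields the stated $\mathcal O(1/\sqrt K)$ bound.
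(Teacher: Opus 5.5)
Your route is the intended one. The paper gives no separate proof of this corollary; it is meant to follow by plugging the two choices of $\lambda_k$ into Theorem~\ref{th4:1}. Your case (a) is correct, and so is the arithmetic in case (b) ($\lambda_{K-1}=1/(c\sqrt K)$ and $\sum_{j=1}^{K}j^{-1/2}\le 2\sqrt K$).

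You rightly flagged the hypothesis $0<\lambda_k\le 1$ in case (b) as the obstacle, since $\lambda_0=1/c$ and the statement allows every $c>0$. Neither of your two fixes closes it. Citing Remark (a) covers at most $c>1/2$, because it needs $\lambda_0<2$. Your fallback claim is false: the proof of Theorem~\ref{th4:1} does not use only boundedness of $\lambda_k$ with constants absorbed.
\begin{itemize}
\item The proof rests on (\ref{eq4:3}), where the sampled gap $\sum_{i\in J_k}w_{i,k}(f_i(x_k)-f_i(\hat x))$ carries the coefficient $t_k(1-\lambda_k/2)$.
\item The step to (\ref{eq4:22}) uses $1-\lambda_k/2\ge 1/2$.
\item For $\lambda_k\ge 2$ this coefficient is nonpositive, so those iterations give no control of the gap at all.
\end{itemize}
Concretely, with $\lambda_k=2$ and the Polyak branch of (\ref{eq4:1}) active, (\ref{eq4.2}) collapses to $D_{\psi}^{x_{k+1}^*}(x_{k+1},\hat x)\le D_{\psi}^{x_{k}^*}(x_{k},\hat x)+t_k\sum_{i\in J_k}w_{i,k}(f_i(\hat x)-l_{J_k}^{*})$, which has no descent term. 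That is a sign problem, not a constant you can absorb.

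Even for $c\in(1/2,1)$, rerun the appendix argument with the uniform bound $1-\lambda_k/2\ge 1-\frac{1}{2c}$ in place of $1/2$. You only get the displayed right-hand side multiplied by $\frac{c}{2c-1}>1$. So the substitution justifies the stated constants $2cD/\tilde L$ and $2\sigma_\tau^2/c$ only for $c\ge 1$, and for $c>1/2$ only if you accept Remark (a) without proof. For $c\in(0,1/2]$ the claim is simply not a consequence of Theorem~\ref{th4:1}, and the paper supplies nothing further.

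If you want an $\mathcal{O}(1/\sqrt K)$ rate for small $c$, you need a genuinely different step:
\begin{itemize}
\item Split off the fewer than $1/c^2$ initial iterations with $\lambda_k>1$.
\item Bound their contribution by $F(x_k)-\bar F\le \frac{L_{\max}}{\mu}D_{\psi}^{x_k^*}(x_k,\hat x)\le \frac{L_{\max}}{\mu}D$, using smoothness of $F$ and Lemma~\ref{lemma:2}.
\item Telescope from the first index with $\lambda_k\le 1$.
\end{itemize}
This adds an extra $\mathcal{O}(L_{\max}D/(\mu c^2K))$ term, so the constants differ from the displayed ones. Otherwise, state the restriction $c\ge 1$ explicitly rather than asserting a patch.
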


The validity of Theorem \ref{th4:1} and Corollary \ref{coro4:1} relies on the boundedness of the iterates $D<\infty$. Without additional assumptions beyond Assumption \ref{assume.1}, this requirement is standard in the convergence analysis of adaptive stepsize schemes, see e.g. \cite{orvieto2022dynamics,jiang2023adaptive}.
By imposing the Bregman distance growth condition on $F$, we obtain linear convergence up to an additive noise term; in particular, under interpolation ($\sigma_\tau^2=0$) this yields linear convergence in expectation to $\bar{X}$.

\begin{theorem}[linear convergence]
	\label{th4:2}
	Let $\{(x_{k},x_{k}^*)\}_{k\geq 0}$ be generated by Algorithm \ref{al1} with DecmSPS (\ref{eq4:1}). Suppose Assumption \ref{assume.1}, Assumption \ref{assume.2}, and Assumption \ref{assume.sampling} hold. Then for $\tilde{L}=\min\left\{\frac{\mu}{2L_{\max}},\frac{\gamma_b}{\lambda_0} \right\}$ and $0<\lambda_k<2$ we have
	\begin{equation*}
	\mathbb{E}\left[\operatorname{dist}_{\psi}^{x_{k+1}^*}(x_{k+1},\bar{X})^2\right]
	\leq
	\left[1-\lambda_k\left(1-\frac{\lambda_k}{2}\right)\gamma \tilde{L}\right]\mathbb{E}\left[\operatorname{dist}_{\psi}^{x_{k}^*}(x_{k},\bar{X})^2\right]
	+\lambda_k\frac{\gamma_b}{\lambda_0}\sigma_{\tau}^2.
	\end{equation*}
\end{theorem}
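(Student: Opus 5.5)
We start from the one-step inequality (\ref{eq4.2}) with $x=\hat{x}_k$, a point of $\bar{X}$ chosen in a $\mathcal{F}_k$-measurable way. The natural choice is the Bregman projection $\hat{x}_k=\Pi_{\bar{X}}^{x_k^*}(x_k)$; note that $\bar{X}$ is closed and convex since $F$ is convex and continuous. With this choice $D_{\psi}^{x_k^*}(x_k,\hat{x}_k)=\operatorname{dist}_{\psi}^{x_k^*}(x_k,\bar{X})^2$. On the left we use $\operatorname{dist}_{\psi}^{x_{k+1}^*}(x_{k+1},\bar{X})^2\le D_{\psi}^{x_{k+1}^*}(x_{k+1},\hat{x}_k)$. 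Write $f_{J_k}=\sum_{i\in J_k}w_{i,k}f_i$ and $g_k=\nabla f_{J_k}(x_k)$.

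The quadratic term is controlled by the Polyak part of DecmSPS. Since $t_k\le \lambda_k\mu (f_{J_k}(x_k)-l^*_{J_k})/\|g_k\|^2$, we get
\[
\frac{t_k^2}{2\mu}\|g_k\|^2\le \frac{\lambda_k t_k}{2}\bigl(f_{J_k}(x_k)-l^*_{J_k}\bigr).
\]
Substituting this into (\ref{eq4.2}) gives
\[
D_{k+1}\le D_k - t_k\bigl(f_{J_k}(x_k)-f_{J_k}(\hat{x}_k)\bigr)+\tfrac{\lambda_k t_k}{2}\bigl(f_{J_k}(x_k)-l^*_{J_k}\bigr).
\]
Next split $f_{J_k}(x_k)-l^*_{J_k}=(f_{J_k}(x_k)-f_{J_k}(\hat{x}_k))+(f_{J_k}(\hat{x}_k)-l^*_{J_k})$. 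This yields
\[
D_{k+1}\le D_k - t_k\bigl(1-\tfrac{\lambda_k}{2}\bigr)\bigl(f_{J_k}(x_k)-f_{J_k}(\hat{x}_k)\bigr)+\tfrac{\lambda_k t_k}{2}\bigl(f_{J_k}(\hat{x}_k)-l^*_{J_k}\bigr).
\]
The last term is nonnegative, so we bound $t_k\le \frac{\gamma_b}{\lambda_0}\lambda_k$ there, using Lemma \ref{lemma4.1}. Dropping the factor $\lambda_k/2\le 1$ is allowed because $\lambda_k<2$. After taking expectation, Assumption \ref{assume.sampling} together with the independence of $J_k$ from $\mathcal F_k$ turns this term into a multiple of $\sigma_\tau^2$. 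One point needs care: $\sigma_\tau^2$ is defined at a fixed $\hat{x}$, not at $\hat{x}_k$. This is fine because $F(\hat x)=\bar F$ for every $\hat x\in\bar X$, so $\mathbb{E}[f_J(\hat{x}_k)\mid\mathcal F_k]=\bar F$ does not depend on the choice of minimizer.

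The main obstacle is the middle term. The stepsize $t_k$ depends on $J_k$, so it is correlated with $f_{J_k}(x_k)-f_{J_k}(\hat{x}_k)$, and that difference can be negative for an individual $J_k$. We cannot simply replace $t_k$ by its lower bound. I would handle this as in \cite{orvieto2022dynamics}. Write
\[
f_{J_k}(x_k)-f_{J_k}(\hat x_k)=\bigl(f_{J_k}(x_k)-l^*_{J_k}\bigr)-\bigl(f_{J_k}(\hat x_k)-l^*_{J_k}\bigr).
\]
The first piece is nonnegative, so there we use $t_k\ge \lambda_k\tilde L$. The second piece is nonpositive after the minus sign, so there we use $t_k\le\frac{\gamma_b}{\lambda_0}\lambda_k$. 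Combined with the previous term, this produces the noise $\lambda_k\frac{\gamma_b}{\lambda_0}\sigma_\tau^2$, possibly after adjusting constants. The remaining contribution is $-\lambda_k(1-\frac{\lambda_k}{2})\tilde L\,\mathbb E[f_{J_k}(x_k)-l^*_{J_k}]$. For this we use $\mathbb E[f_{J_k}(x_k)-l^*_{J_k}\mid\mathcal F_k]\ge F(x_k)-\bar F$, which holds because $l^*_J\le f_J(\hat x_k)$ and $\mathbb{E}[f_J(\hat x_k)]=\bar F$. Getting the noise constant to be exactly $\lambda_k\gamma_b\sigma_\tau^2/\lambda_0$ needs careful bookkeeping of the two $\sigma$-contributions, and I expect this step to need the most attention.

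Finally, we apply Assumption \ref{assume.2}:
\[
F(x_k)-\bar F\ge \gamma\inf_{z\in\bar X}D_{\psi}^{x_k^*}(x_k,z)=\gamma\operatorname{dist}_{\psi}^{x_k^*}(x_k,\bar X)^2.
\]
Taking total expectation then gives the stated contraction factor $1-\lambda_k(1-\lambda_k/2)\gamma\tilde L$ plus the noise term $\lambda_k\frac{\gamma_b}{\lambda_0}\sigma_\tau^2$.
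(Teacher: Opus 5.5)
Your argument is essentially the paper's proof. It uses the same starting inequality (\ref{eq4:3}) and the same splitting through $l^*_{J_k}$ into two nonnegative pieces, bounded by $t_k\ge\lambda_k\tilde L$ and $t_k\le\lambda_k\gamma_b/\lambda_0$ from Lemma \ref{lemma4.1}. It then uses the same conditional-expectation identities, $\mathbb{E}[f_{J_k}(x_k)-l^*_{J_k}\mid\mathcal F_k]=F(x_k)-\bar F+\sigma_\tau^2$ with the noise term equal to $\sigma_\tau^2$ at any point of $\bar X$, and finishes with BDGC. Your $\mathcal F_k$-measurable choice $\hat x_k=\Pi_{\bar X}^{x_k^*}(x_k)$ is an equivalent, slightly cleaner, substitute for the paper's ``for every $x\in\bar X$, then take the infimum''.

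The bookkeeping you flagged resolves exactly, provided you do not drop the factor $\lambda_k/2$ in the last term. The two noise contributions have coefficients $t_k\lambda_k/2$ and $t_k(1-\lambda_k/2)$, which sum to $t_k$; the paper records this as $-t_k(1-\frac{\lambda_k}{2})(f_{J_k}(x_k)-l^*_{J_k})+t_k(f_{J_k}(x)-l^*_{J_k})$ before applying Lemma \ref{lemma4.1}. This gives exactly $\lambda_k\frac{\gamma_b}{\lambda_0}\sigma_\tau^2$, whereas dropping $\lambda_k/2$ early, as you proposed, would inflate the noise to $(2-\frac{\lambda_k}{2})\lambda_k\frac{\gamma_b}{\lambda_0}\sigma_\tau^2$.
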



\begin{corollary}
	\label{coro5}
Under the setting of Theorem \ref{th4:2}, we obtain the following convergence result.
	\begin{enumerate}[(a)]
		\item In the interpolated case ($\sigma_{\tau}^2=0$), let $\lambda_k=\frac{1}{c},k\in\mathbb{N}$ with $c>\frac{1}{2}$, yielding a linear convergence rate 
	$$
	\mathbb{E}\left[\operatorname{dist}_{\psi}^{x_{k}^*}(x_{k},\bar{X})^2\right]
	\leq
	\left[1-\frac{1}{c}\left(1-\frac{1}{2c}\right)\gamma\tilde{L}\right]^k\operatorname{dist}_{\psi}^{x_{0}^*}(x_{0},\bar{X})^2.
	$$
		\item In the non-interpolated case ($\sigma_{\tau}^2>0$), let $\lambda_k=\frac{1}{c\sqrt{k+1}},k\in\mathbb{N}$ with $c>0$, yielding convergence to the set $\bar{X}$ 
	$$
	\begin{aligned}
	&~~~~\mathbb{E}\left[\operatorname{dist}_{\psi}^{x_{k+1}^*}(x_{k+1},\bar{X})^2\right]\\
	&\leq
	\left[1-\frac{1}{c\sqrt{k+1}}\left(1-\frac{1}{2c\sqrt{k+1}}\right) \gamma \tilde{L} \right]\mathbb{E}\left[\operatorname{dist}_{\psi}^{x_{k}^*}(x_{k},\bar{X})^2\right]
	+\frac{\gamma_b}{\lambda_0}\cdot\frac{\sigma_{\tau}^2}{c\sqrt{k+1}}.
	\end{aligned}
	$$
 	\end{enumerate}
\end{corollary}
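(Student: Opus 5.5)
The plan is to specialize the one-step recursion of Theorem \ref{th4:2} to the two stepsize schedules and then unroll it. The constraint $0<\lambda_k<2$ required there holds in both cases: in (a) because $c>\frac12$ gives $\lambda_k=\frac1c<2$, and in (b) because $\lambda_k=\frac{1}{c\sqrt{k+1}}$ is positive and nonincreasing. For (b) I would additionally assume $c>\frac12$, or else only use the recursion for indices with $c\sqrt{k+1}>\frac12$, since otherwise $\lambda_0<2$ fails.

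For part (a), write $r_k=\mathbb{E}[\operatorname{dist}_{\psi}^{x_{k}^*}(x_{k},\bar{X})^2]$. With $\sigma_\tau^2=0$ and $\lambda_k\equiv 1/c$, Theorem \ref{th4:2} gives
\[
r_{k+1}\le \rho\, r_k,\qquad \rho=1-\tfrac1c\left(1-\tfrac{1}{2c}\right)\gamma\tilde L .
\]
Inducting from $k=0$ yields $r_k\le \rho^k r_0$. Here $r_0$ is deterministic because $x_0,x_0^*$ are fixed, so $r_0=\operatorname{dist}_{\psi}^{x_{0}^*}(x_{0},\bar{X})^2$.

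The only delicate step is checking that $\rho\in[0,1)$, so that the bound really is linear convergence.
\begin{itemize}
\item Positivity of the contraction term: $c>\frac12$ gives $1-\frac1{2c}>0$, and $\gamma,\tilde L>0$, so $\rho<1$.
\item Nonnegativity of $\rho$: first note $\frac1c\left(1-\frac1{2c}\right)\le \frac12$, with the maximum at $c=1$. So it suffices to show $\gamma\tilde L\le 2$.
\item Bounding $\gamma\tilde L$: take a point $x$ with $F$ finite, which is possible since $\bar X\ne\emptyset$ by Assumption \ref{assume.1}(a). Combine the quadratic growth consequence of BDGC, $F(x)-\bar F\ge\frac{\gamma\mu}{2}\operatorname{dist}(x,\bar X)^2$, with the smoothness upper bound $F(x)-\bar F\le \frac{L_{\max}}2\operatorname{dist}(x,\bar X)^2$. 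This gives $\gamma\mu\le L_{\max}$, hence $\gamma\tilde L\le \gamma\frac{\mu}{2L_{\max}}\le\frac12$.
\item Degenerate case: if $\bar X=\mathbb{R}^n$, the distance is identically zero and the statement is trivial.
\end{itemize}
In any case, $r_{k+1}\le\rho r_k$ with $r_k\ge0$ already forces $\rho\ge 0$ whenever some $r_k>0$, so this check is mostly cosmetic.

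For part (b), substitute $\lambda_k=\frac{1}{c\sqrt{k+1}}$ into Theorem \ref{th4:2}. The contraction factor becomes
\[
1-\frac{1}{c\sqrt{k+1}}\left(1-\frac{1}{2c\sqrt{k+1}}\right)\gamma\tilde L ,
\]
and the noise term becomes $\lambda_k\frac{\gamma_b}{\lambda_0}\sigma_\tau^2=\frac{\gamma_b}{\lambda_0}\cdot\frac{\sigma_\tau^2}{c\sqrt{k+1}}$. This is exactly the displayed inequality, so no unrolling is needed.

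To justify the phrase ``convergence to $\bar X$'', I would add a remark invoking a Chung/Polyak-type lemma for sequences satisfying $r_{k+1}\le(1-a_k)r_k+b_k$ with $a_k\asymp 1/\sqrt{k}$ and $b_k/a_k\to$ const. Such a lemma gives $\limsup r_k\le \frac{2\gamma_b\sigma_\tau^2}{\lambda_0\gamma\tilde L}$. The main obstacle is that this noise term does not vanish relative to the contraction, so a genuine limit of zero is not available from this recursion alone. I would therefore present (b) exactly as the per-step inequality stated and leave the asymptotic interpretation to that remark.
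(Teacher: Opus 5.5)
Your proposal is correct and is the same route as the paper, which gives no separate proof and treats the corollary as an immediate specialization of Theorem~\ref{th4:2}: part (a) substitutes $\sigma_\tau^2=0$ and $\lambda_k\equiv 1/c$ and unrolls, with your bound $\gamma\tilde L\le\frac{\gamma\mu}{2L_{\max}}\le\frac12$ correctly securing $\rho\ge 0$ for the induction, and part (b) is a direct substitution of $\lambda_k=\frac{1}{c\sqrt{k+1}}$. Both of your caveats are sound: when $c\le\frac12$ the choice $\lambda_0=1/c$ violates $\lambda_0<2$, so $c>\frac12$ (or restricting to indices with $\lambda_k<2$) is genuinely needed in (b); and since the contraction and noise terms are both of order $\lambda_k$, the displayed recursion only yields a neighborhood of $\bar{X}$ of radius about $\frac{\gamma_b\sigma_\tau^2}{\lambda_0\gamma\tilde L}$ (your factor $2$ is unnecessary but harmless), not exact convergence.
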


\begin{remark}
Unlike SMD with mirror stochastic Polyak stepsize, which guarantees convergence only to a neighborhood of the solution in Theorem 7 and Theorem 5 of \cite{d2021stochastic2} in the case of non-interpolation, SMD with our proposed DecmSPS yields a contraction up to an additive noise term; under interpolation, it converges linearly in expectation to $\bar{X}$.
\end{remark}

In the setting of Corollary \ref{coro5} (a), DecmSPS simplifies as
\begin{equation}
\label{eq7:6}
t_k=\min\left\{\frac{\mu}{c}\cdot\frac{\sum_{i\in J_k} w_{i,k} (f_i(x_k)-l_{J_k}^{*}) }{\|\sum_{i\in J_k} w_{i,k}\nabla f_i(x_k)\|^2},t_{k-1}\right\},k\geq 0,
\end{equation}
where $t_{-1}=\gamma_b>0$. 
According to Lemma \ref{lemma4.1}, DecmSPS is bounded by
$$\tilde{L}=\min\left\{\frac{\mu}{2cL_{\max}} ,\gamma_b\right\}\leq t_k\leq \gamma_b.$$

\begin{remark}
\label{remark4.1}
The performance of SBBP is affected by the choice of $\gamma_b$ and $c$. In practice, these parameters may require careful tuning.
\begin{enumerate}[(a)]
	\item On the one hand, in the non-interpolated case, Theorem \ref{th4:2} indicates that the radius of the convergence neighborhood is proportional to $\gamma_b$; thus, $\gamma_b$ should not be chosen too large to ensure convergence to a small neighborhood. 
	On the other hand, to prevent the stepsize from degenerating to a constant stepsize, the condition $\gamma_b> \frac{\mu}{2cL_{\max}}$ is required; otherwise, the adaptive stepsize (\ref{eq7:6}) reduces to a constant stepsize $t_k\equiv\gamma_b$.
\item	The parameter $c>0$ controls the overall scale of the Polyak stepsize.
The choice of $c$ involves a trade-off between the convergence rate and the size of the neighborhood. 
We will derive the relationship between $c$ and $\gamma_b$ for linear feasibility problems in Corollary \ref{corol5.3}.
\end{enumerate}
\end{remark}

\subsection{Convergence of SBBP with projective stepsizes}

We first establish a sublinear convergence result for Algorithm \ref{al1} with projective stepsizes under Assumptions \ref{assume.1}, \ref{assume.projective}, and \ref{assume.sampling}.

\begin{theorem}[sublinear convergence]
	\label{th3}
Under Assumptions \ref{assume.1}, \ref{assume.projective}, and \ref{assume.sampling}, let $L_{\max}=\max_{i=1}^m L_i$ and $\{x_k\}_{k\geq 0}$ be generated by Algorithm \ref{al1} with the \textsl{exact projective stepsize} $\hat{t}_k$ in (\ref{eq4.6}) or the \textsl{block adaptive extrapolated stepsize} in (\ref{eq6.9}). 
\begin{enumerate}[(a)]
\item For every integer $K\geq 1$, it holds that
$$
\min_{0\leq k\leq K-1}\mathbb{E}\|\nabla F(x_k)\|^2
\leq
\frac{2LL_{\max}^2L_{\tau}^{\operatorname{block}}}{\mu K}
\operatorname{dist}_{\psi}^{x_{0}^*}(x_{0},\bar{X})^2.
$$
\item Assuming further that $F$ satisfies the PL inequality (\ref{eq4:9}), the average $\bar{x}_K=\frac{1}{K}\sum_{k=0}^{K-1}x_k$ satisfies
$$
\mathbb{E}\left[F(\bar{x}_{K})-\bar{F}\right]
\leq
\frac{4LL_{\max}^2L_{\tau}^{\operatorname{block}}}{\mu^2\gamma K}
\operatorname{dist}_{\psi}^{x_{0}^*}(x_{0},\bar{X})^2.
$$
\end{enumerate}
\end{theorem}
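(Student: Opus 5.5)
Fix $\hat{x}\in\bar{X}=C$ with admissible dual data, and start from Lemma \ref{lemma5.1}(b). Since $\bar{X}\subseteq H_k$, the lemma applies with $x=\hat{x}$ and gives
$$D_{\psi}^{x_{k+1}^*}(x_{k+1},\hat x)\le D_{\psi}^{x_k^*}(x_k,\hat x)-\frac{\mu}{2L_{\max}^2L_{\operatorname{adapt}}^{k,\tau}}\sum_{i\in J_k}w_{i,k}\|\nabla f_i(x_k)\|^2 .$$
This holds for the exact projective stepsize and for the block adaptive extrapolated stepsize. Using $L_{\operatorname{adapt}}^{k,\tau}\le L_\tau^{\operatorname{block}}$ (Definition \ref{def2}), the decrease term is at least $\frac{\mu}{2L_{\max}^2L_\tau^{\operatorname{block}}}\sum_{i\in J_k}w_{i,k}\|\nabla f_i(x_k)\|^2$. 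If $\sum_{i\in J_k} w_{i,k}\|\nabla f_i(x_k)\|^2=0$, the iterate does not move and the bound holds trivially.

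Next, take the conditional expectation given $\mathcal{F}_k$. By Assumption \ref{assume.sampling} and identity (\ref{eq3.6}), with $w_{i,k}=1/\tau$ and $p_i=\tau/m$,
$$\mathbb{E}_k\Big[\sum_{i\in J_k}w_{i,k}\|\nabla f_i(x_k)\|^2\Big]=\frac1m\sum_i\|\nabla f_i(x_k)\|^2=\mathbb{E}_i\|\nabla f_i(x_k)\|^2 .$$
By Definition \ref{def1}, this is at least $\frac1L\|\mathbb{E}_i\nabla f_i(x_k)\|^2=\frac1L\|\nabla F(x_k)\|^2$. Hence
$$\mathbb{E}_k D_{k+1}\le D_k-\frac{\mu}{2LL_{\max}^2L_\tau^{\operatorname{block}}}\|\nabla F(x_k)\|^2,$$
where $D_k=D_{\psi}^{x_k^*}(x_k,\hat x)$.

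Take full expectations, telescope over $k=0,\dots,K-1$, and drop $\mathbb{E}D_K\ge0$. This gives
$$\sum_{k=0}^{K-1}\mathbb{E}\|\nabla F(x_k)\|^2\le\frac{2LL_{\max}^2L_\tau^{\operatorname{block}}}{\mu}D_0 .$$
Bounding the minimum by the average gives (a) with $D_0$ in place of the squared Bregman distance to the set. To replace $D_0$ by $\operatorname{dist}_\psi^{x_0^*}(x_0,\bar X)^2$, choose $\hat x=\Pi_{\bar X}^{x_0^*}(x_0)$. This is legitimate because $\bar X=C$ is closed, convex and nonempty, and $\hat x$ is fixed before the iteration begins, so the telescoping with a fixed $\hat x$ is valid.

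For (b), apply the PL inequality (\ref{eq4:9}), $F(x_k)-\bar F\le\frac{2}{\gamma\mu}\|\nabla F(x_k)\|^2$, inside the summed bound. This yields
$$\sum_{k=0}^{K-1}\mathbb{E}[F(x_k)-\bar F]\le\frac{4LL_{\max}^2L_\tau^{\operatorname{block}}}{\mu^2\gamma}\operatorname{dist}_\psi^{x_0^*}(x_0,\bar X)^2 .$$
Dividing by $K$ and using convexity of $F$ (Jensen, $F(\bar x_K)\le\frac1K\sum_k F(x_k)$) gives exactly the stated constant.

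The main obstacle is the expectation step: Definition \ref{def1} is posed with the single-index distribution $\mathbf P$, so it must be matched to the block second moment. The unbiasedness computation via (\ref{eq3.6}) with uniform weights does this, but it also has to respect the direction of $L_{\operatorname{adapt}}^{k,\tau}$, which is random and enters in the denominator. This is handled by first bounding it deterministically by $L_\tau^{\operatorname{block}}$ before taking expectations.
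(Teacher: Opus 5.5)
Your proof is correct and follows essentially the same route as the paper: Lemma \ref{lemma5.1}(b), then $L_{\operatorname{adapt}}^{k,\tau}\le L_\tau^{\operatorname{block}}$, then the conditional expectation via Assumption \ref{assume.sampling} and Definition \ref{def1}, telescoping with min $\le$ average, and finally the PL inequality plus Jensen for (b). The only difference is cosmetic. The paper takes the infimum over $\bar X$ in each one-step inequality and telescopes $\operatorname{dist}_\psi^{x_k^*}(x_k,\bar X)^2$ directly, whereas you fix $\hat x=\Pi_{\bar X}^{x_0^*}(x_0)$ and telescope $D_\psi^{x_k^*}(x_k,\hat x)$; this gives the same constant, while the paper's version also yields the set-distance recursion that is reused in Theorem \ref{th5.5}.
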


We additionally assume that $F$ satisfies the Bregman distance growth condition to derive the linear convergence of Algorithm \ref{al1} with projective stepsizes.

\begin{theorem}[linear convergence]
	\label{th5.5}
Under Assumptions \ref{assume.1}, \ref{assume.projective}, \ref{assume.2}, and \ref{assume.sampling}, let $L_{\max}=\max_{i=1}^m L_i$ and $\{x_k\}_{k\geq 0}$ be generated by Algorithm \ref{al1} with \textsl{exact projective stepsize} $\hat{t}_k$ (\ref{eq4.6}) or \textsl{inexact projective stepsize} (\ref{eq6.9}).
	The sequence $\{x_k\}_{k\geq 0}$ converges to the set $\bar{X}$ linearly in expectation
	$$ \mathbb{E}[\operatorname{dist}_{\psi}^{x_{k}^*}(x_{k},\bar{X})^2]
	\leq
	\left(1-\frac{\gamma^2\mu^2}{4L_{\max}^2L_{\tau}^{\operatorname{block}}L}\right)^k\operatorname{dist}_{\psi}^{x_{0}^*}(x_{0},\bar{X})^2.$$
\end{theorem}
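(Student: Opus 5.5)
We prove Theorem \ref{th5.5} by combining the basic descent lemma (Lemma \ref{lemma5.1}(b)) with a chain of inequalities linking the block gradient quantity to $\|\nabla F\|^2$, then to $F-\bar F$, and finally, via BDGC, to the Bregman distance to $\bar X$.

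\textbf{Step 1: one-step descent.} Fix $k$. Under Assumption \ref{assume.projective} we have $\bar X=C\subseteq H_k$, so Lemma \ref{lemma5.1}(b) applies with $x=\hat z$, where $\hat z$ is the Bregman projection of $x_k$ onto $\bar X$ with respect to $x_k^*$. This gives
\[
\operatorname{dist}_{\psi}^{x_{k+1}^*}(x_{k+1},\bar X)^2\le D_{\psi}^{x_{k+1}^*}(x_{k+1},\hat z)\le \operatorname{dist}_{\psi}^{x_{k}^*}(x_{k},\bar X)^2-\frac{\mu}{2L_{\max}^2L_{\operatorname{adapt}}^{k,\tau}}\sum_{i\in J_k} w_{i,k}\|\nabla f_i(x_k)\|^2 .
\]
Since $L_{\operatorname{adapt}}^{k,\tau}\le L_{\tau}^{\operatorname{block}}$, the factor $\frac{1}{L_{\operatorname{adapt}}^{k,\tau}}$ may be replaced by $\frac{1}{L_\tau^{\operatorname{block}}}$. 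If $\sum_{i\in J_k} w_{i,k}\|\nabla f_i(x_k)\|^2=0$, the step is trivial.

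\textbf{Step 2: conditional expectation.} We take expectation conditional on $\mathcal F_k$. Under Assumption \ref{assume.sampling}, identity (\ref{eq3.6}) gives $\mathbb E[\sum_{i\in J_k}\frac1\tau\|\nabla f_i(x_k)\|^2]=\frac1m\sum_i\|\nabla f_i(x_k)\|^2=\mathbb E_i\|\nabla f_i(x_k)\|^2$. Definition \ref{def1} then yields $\mathbb E_i\|\nabla f_i(x_k)\|^2\ge \frac1L\|\nabla F(x_k)\|^2$.

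\textbf{Step 3: from gradient to distance.} This is the main step. We need $\|\nabla F(x)\|^2\ge c\,\operatorname{dist}_\psi^{x^*}(x,\bar X)^2$ with $c=\gamma^2\mu/2$. The PL inequality (\ref{eq4:9}) (remark (b)) gives $\|\nabla F(x)\|^2\ge \frac{\gamma\mu}{2}(F(x)-\bar F)$, and BDGC gives $F(x)-\bar F\ge \gamma\,\operatorname{dist}_\psi^{x^*}(x,\bar X)^2$. Together these give $\|\nabla F(x_k)\|^2\ge \frac{\gamma^2\mu}{2}\operatorname{dist}_\psi^{x_k^*}(x_k,\bar X)^2$. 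Multiplying by $\frac{\mu}{2L_{\max}^2L_\tau^{\operatorname{block}}L}$ produces exactly the claimed rate $\frac{\gamma^2\mu^2}{4L_{\max}^2L_\tau^{\operatorname{block}}L}$, which confirms this route. If the PL implication needs justification, it follows from convexity: $F(x)-\bar F\le \langle\nabla F(x),x-\bar x\rangle\le\|\nabla F(x)\|\operatorname{dist}(x,\bar X)$ together with the QGC $F-\bar F\ge\frac{\gamma\mu}{2}\operatorname{dist}^2$ from remark (a).

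\textbf{Step 4: unroll.} Taking total expectation and iterating the contraction gives the linear rate. The contraction factor lies in $[0,1)$ because the descent inequality already forces it to be nonnegative.

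The hardest step is Step 3, together with correctly handling the data-dependent factor $1/L_{\operatorname{adapt}}^{k,\tau}$ inside the expectation. Bounding it uniformly by $1/L_\tau^{\operatorname{block}}$ before taking expectations avoids any correlation issue.
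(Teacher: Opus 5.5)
Your proposal is correct and follows the paper's proof essentially step for step. Like the paper, you use Lemma \ref{lemma5.1}(b) with $L_{\operatorname{adapt}}^{k,\tau}\le L_{\tau}^{\operatorname{block}}$, then the sampling identity together with the definition of $L$, then the PL inequality deduced from BDGC followed by BDGC itself, and finally the tower property. Your explicit derivation of PL from the quadratic growth condition and your remark on the nonnegativity of the contraction factor are details the paper leaves implicit.
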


Theorems \ref{th4:2} and \ref{th5.5} establish linear convergence in expectation of Algorithm \ref{al1} to the inner minimizer set $\bar{X}$ in terms of the Bregman distance, providing a broader generalization of the convergence results for stochastic block projection method in \cite{necoara2019randomized,necoara2022stochastic}. However, these results do not guarantee convergence to the solution of the bilevel problem (\ref{eq3.3}). Under additional assumptions, we further establish linear convergence in expectation to the unique solution of (\ref{eq3.3}) in the consistent case, by adapting the proof of Theorem 1 in \cite{akhtiamov2026implicit} to our setting, as stated below. 

\begin{proposition}
	\label{th5.6}
	Under Assumptions \ref{assume.1}--\ref{assume.projective}, assume further that $\psi$ has an $L_{\psi}$-Lipschitz continuous gradient and each $f_i$ is $\mu_f$-strongly convex with modulus $\mu_f>0$ for $i\in [m]$. Let $\hat{x}$ be the unique solution of the bilevel problem (\ref{eq3.3}).
	Let $L_{\max}=\max_{i=1}^m L_i$ and $\{x_k\}_{k\geq 0}$ be generated by Algorithm \ref{al1} with the stepsize $t_k\in (0,\frac{\mu}{L_{\max}}]$.
	Then the sequence $\{x_k\}_{k\geq 0}$ converges in expectation to the unique solution $\hat{x}$ of the bilevel optimization problem (\ref{eq3.3}) with a linear rate
	$$\mathbb{E}D_{\psi}(x_{k+1},\hat{x})\leq
	\left(1- \frac{\mu_f}{L_{\psi}}t_k\right)\mathbb{E}D_{\psi}(x_{k},\hat{x}).$$
\end{proposition}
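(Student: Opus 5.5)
The plan is to prove a \emph{pathwise} one-step contraction of $D_{\psi}(x_k,\hat{x})$ and then take expectations. First I will pin down $\hat{x}$. By Assumption \ref{assume.projective}, $C\neq\emptyset$, each $f_i\geq 0$ vanishes exactly on $C_i$, and $\bar{X}=C$. Since every $f_i$ is $\mu_f$-strongly convex, its zero set $C_i$ is a single point. Hence $C$ is a singleton, $\bar{X}=C=\{\hat{x}\}$, and this point is trivially the unique bilevel solution of (\ref{eq3.3}). As noted after Assumption \ref{assume.projective}, $\nabla f_i(\hat{x})=0$ for all $i\in[m]$. Because $\psi$ is differentiable, I will write $D_{\psi}(x_k,\hat{x})$ for $D_{\psi}^{x_k^*}(x_k,\hat{x})$.

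The starting point is the mirror-descent estimate derived in Section \ref{sec3.3.1}, before convexity of $f_i$ was used. It relies only on the $1/\mu$-Lipschitz continuity of $\nabla\psi^*$. With $g_k=\sum_{i\in J_k}w_{i,k}\nabla f_i(x_k)$ and $x=\hat{x}$ it reads
\[
D_{\psi}(x_{k+1},\hat{x})\leq D_{\psi}(x_k,\hat{x})-t_k\langle g_k,x_k-\hat{x}\rangle+\frac{t_k^2}{2\mu}\|g_k\|^2 .
\]
The key step is a lower bound on the inner product that combines strong convexity with co-coercivity (\ref{eq3.1}). Since $\nabla f_i(\hat{x})=0$, averaging the two inequalities $\langle\nabla f_i(x_k),x_k-\hat{x}\rangle\geq\mu_f\|x_k-\hat{x}\|^2$ and $\langle\nabla f_i(x_k),x_k-\hat{x}\rangle\geq\frac{1}{L_i}\|\nabla f_i(x_k)\|^2$ gives
\[
\langle g_k,x_k-\hat{x}\rangle\geq\frac{\mu_f}{2}\|x_k-\hat{x}\|^2+\frac{1}{2L_{\max}}\sum_{i\in J_k}w_{i,k}\|\nabla f_i(x_k)\|^2 .
\]
By Jensen's inequality, $\|g_k\|^2\leq\sum_{i\in J_k}w_{i,k}\|\nabla f_i(x_k)\|^2$. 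The gradient terms then carry the coefficient $-t_k\bigl(\frac{1}{2L_{\max}}-\frac{t_k}{2\mu}\bigr)$, which is nonpositive exactly when $t_k\leq\mu/L_{\max}$. This is precisely the stated stepsize range, so these terms can be dropped.

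It remains to convert $\|x_k-\hat{x}\|^2$ back into a Bregman distance. The $L_{\psi}$-Lipschitz gradient of $\psi$ gives $D_{\psi}(x_k,\hat{x})\leq\frac{L_{\psi}}{2}\|x_k-\hat{x}\|^2$. Therefore
\[
-\frac{t_k\mu_f}{2}\|x_k-\hat{x}\|^2\leq-\frac{t_k\mu_f}{L_{\psi}}D_{\psi}(x_k,\hat{x}),
\]
which yields $D_{\psi}(x_{k+1},\hat{x})\leq\bigl(1-\frac{\mu_f}{L_{\psi}}t_k\bigr)D_{\psi}(x_k,\hat{x})$ for every realization of $J_k$. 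Taking expectations gives the claim.

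The main subtlety I expect is the expectation step when $t_k$ is random, for instance when it is adaptive and depends on $J_k$. In that case I would either treat $t_k$ as deterministic, or state the contraction pathwise and use a deterministic lower bound on $t_k$ before taking expectations. The factor $1-\frac{\mu_f}{L_{\psi}}t_k$ lies in $[0,1)$ because $\mu_f\leq L_{\max}$ and $\mu\leq L_{\psi}$. The other delicate point is the half-and-half splitting of the inner product: it must leave just enough of the co-coercivity term to absorb $\frac{t_k^2}{2\mu}\|g_k\|^2$ under $t_k\leq\mu/L_{\max}$.
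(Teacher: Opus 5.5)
Your proof is correct, but it reaches the key descent inequality by a different route than the paper.

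The paper applies the three-points identity to both $\psi$ and $f_{J_k}$ and subtracts the two. It then uses two facts: $\psi-t_kf_{J_k}$ is convex for $t_k\le\mu/L_{\max}$, and $f_{J_k}(\hat{x})=0\le f_{J_k}(x_{k+1})$. Together these give $D_{\psi}(x_{k+1},\hat{x})\le D_{\psi}(x_k,\hat{x})-t_kD_{f_{J_k}}(x_k,\hat{x})$. Only after that does it invoke strong convexity, via $D_{f_{J_k}}(x_k,\hat{x})\ge\frac{\mu_f}{2}\|x_k-\hat{x}\|^2$.

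You instead start from the mirror-descent estimate of Section~\ref{sec3.3.1}, which rests on the $1/\mu$-smoothness of $\psi^*$. You split $\langle g_k,x_k-\hat{x}\rangle$ half into strong monotonicity and half into co-coercivity (\ref{eq3.1}), using $\nabla f_i(\hat{x})=0$. Jensen then absorbs $\frac{t_k^2}{2\mu}\|g_k\|^2$ exactly when $t_k\le\mu/L_{\max}$. Both arguments reach the same intermediate bound $D_{\psi}(x_{k+1},\hat{x})\le D_{\psi}(x_k,\hat{x})-\frac{\mu_f t_k}{2}\|x_k-\hat{x}\|^2$, and both finish identically with the $L_{\psi}$-smoothness of $\psi$.

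The trade-offs between the two routes are as follows.
\begin{itemize}
\item \emph{Paper's route.} It produces an exact identity valid for every $x$, and a descent term $t_kD_{f_{J_k}}(x_k,\hat{x})$ that needs only convexity and smoothness of the $f_i$; strong convexity enters only at the very end. The price is that $\psi$ must be differentiable throughout, and the argument needs the function-value facts $f_{J_k}\ge 0=f_{J_k}(\hat{x})$.
\item \emph{Your route.} It reuses tools already in the paper and needs only the gradient condition $\nabla f_i(\hat{x})=0$. It works with subgradient-based Bregman distances until the final conversion step. The price is that strong convexity is already used inside the descent step.
\end{itemize}

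Your side remarks are also correct and go beyond the paper. First, the hypotheses force every $C_i$, and hence $\bar{X}=C$, to be the single point $\hat{x}$, so the bilevel selection is vacuous here. Second, the final ``take expectations'' step, which the paper performs without comment, is only literally valid when $t_k$ is deterministic.
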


In Proposition \ref{th5.6}, by the definitions of smoothness and strong convexity for $\psi$ and $f_i$, we have $L_{\psi}>\mu$ and $L_i> \mu_f$. Hence, for any $t_k\in (0,\frac{\mu}{L_{\max}}]$, it holds that $1- \frac{\mu_f}{L_{\psi}}t_k>0$.
For inconsistent CFPs, we leave a rigorous convergence analysis showing that SBBP converges to a solution of the bilevel problem (\ref{eq3.3}) for future work.

\begin{table}[h]
	\caption{The key convergence results for SBBP with different choices of block sizes and stepsizes under Assumption \ref{assume.1} and Assumption \ref{assume.2}. For the projective stepsizes, Assumption \ref{assume.projective} is additionally required. Assume that $F$ is $L_F$-smooth. Let $\mathcal{E}_k=\lambda_k\left(1-\frac{\lambda_k}{2}\right)$ with $\lambda_k=\frac{1}{c\sqrt{k+1}},~c>0$ and $\mathcal{D}_k=\operatorname{dist}_{\psi}^{x_{k}^*}(x_{k},\bar{X})^2$ for simplicity.
	}
	\label{table3}
	\begin{tabular*}{\textwidth}{@{\extracolsep{\fill}}cccc@{\extracolsep{\fill}}}
		\toprule
		Block size   & Stepsize & Allows $C=\emptyset$? & Convergence rates 
		\\
		\midrule
		$|J_k|=m$ \cite{dai2024cut}  & $t_k=\frac{\mu}{L_{F}}$ & \ding{55} &	$  \mathcal{D}_k
		\leq
		\left(1-\frac{\gamma^2\mu^2}{4L_{F}^2}\right)^{k}\displaystyle\mathcal{D}_{0}$ 
		\\
		\hline
		\multirow{2}{*}{$|J_k|=\tau$}	& (\ref{eq4:1}) & \ding{51} &$
		\mathbb{E}[\mathcal{D}_{k+1}]
		\leq
		(1-\mathcal{E}_k\gamma \tilde{L})\mathbb{E}[\mathcal{D}_k]
		+\frac{\gamma_b\sigma_{\tau}^2}{\lambda_0 c}\cdot\frac{1}{\sqrt{k+1}}
		$ 
		\\
		\cmidrule{ 2 - 4}
		&  (\ref{eq4.6}), (\ref{eq6.9}) & \ding{55} &$ \mathbb{E}[\mathcal{D}_k]
		\leq
		\left(1-\frac{\gamma^2\mu^2}{4L_{\max}^2L_{\tau}^{\operatorname{block}}L}\right)^{k}\mathcal{D}_{0}$
		\\
		\hline
		$|J_k|=1$ & $t_k=\frac{\mu}{L_{i_k}}$ & \ding{55} & $\mathbb{E}[\mathcal{D}_k]\leq\left(1-\frac{\gamma^2\mu^2}{4L_{\max}^2L}\right)^k\mathcal{D}_0$ 
		\\
		\botrule
	\end{tabular*}	
\end{table}

Table \ref{table3} summarizes the convergence results of SBBP for different block sizes $|J_k|$. In the full-batch case ($J_k=[m]$), the SBBP method reduces to the deterministic Bregman projection method driven by $\nabla F(x_k)$ in \cite{dai2024cut}, and Theorem \ref{th5.5} recovers Theorem 4.4 in \cite{dai2024cut}. In the single-sample case ($J_k=\{i_k\}$), the SBBP method reduces to the stochastic Bregman projection method driven by $\nabla f_{i_k}(x_k)$. Notably, SBBP with DecmSPS (\ref{eq4:1}) extends the theory to the inconsistent case $C=\emptyset$, yielding linear convergence in expectation to the set $\bar{X}$ up to a decaying convergence horizon. Moreover, when $L_{\tau}^{\operatorname{block}}<1$ (and it may even occur that $L_{\tau}^{\operatorname{block}}\thickapprox 0$), using blocks ($|J_k|=\tau$) can be faster than the single-sample case ($|J_k|=1$).

\section{Application examples}
\label{sec5}
This section presents two representative applications to illustrate how to instantiate our algorithmic framework and apply the associated convergence results.

\subsection{The linear feasibility problems}
\label{sec5.1}

As a first application, we consider sparse solutions to linear feasibility problems
\begin{equation}
\label{eq6:6}
\text{find}~x\in C=\{x\mid A_{\mathcal{I}}x=b_{\mathcal{I}},
A_{\mathcal{J}}x\leq b_{\mathcal{J}}\},
\end{equation}
where $A_{\mathcal{I}}\in\mathbb{R}^{m_{\mathcal{I}}\times n}$ and $A_{\mathcal{J}}\in\mathbb{R}^{m_{\mathcal{J}}\times n}$. Let $A=[A_{\mathcal{I}};A_{\mathcal{J}}]\in\mathbb{R}^{m\times n}$ and $b=[b_{\mathcal{I}};b_{\mathcal{J}}]\in\mathbb{R}^{m}$, where $m=m_{\mathcal{I}}+m_{\mathcal{J}}$ and $[m]=\mathcal{I}\cup \mathcal{J}$. 
Motivated by the simplicity and efficiency of the randomized Kaczmarz method, the authors of \cite{leventhal2010randomized} proposed a variant for consistent linear inequalities and established linear convergence under the Hoffman error bound; see also \cite{briskman2015block,zhang2025block,morshed2020accelerated,de2017sampling,morshed2022sampling} for related developments.

As described in Section \ref{sec3.1}, the bilevel stochastic reformulation of (\ref{eq6:6}) is given by
\begin{equation}
\label{eq6:7}
\min_{x\in\mathbb{R}^n}\psi(x)=\lambda\|x\|_1+\frac{1}{2}\|x\|_2^2,~s.t.~ x\in\arg\min_{x\in\mathbb{R}^n}F(x)=\mathbb{E}[f_i(x)],
\end{equation}
with the regularization parameter $\lambda>0$ and the deviation function 
\begin{equation}
\label{eq6:12}
f_i(x)=\frac{1}{2}\frac{\|e(Ax-b)_{i}\|^2}{\|a_{i}\|^2}.
\end{equation}
Here the function $e:\mathbb{R}^m\rightarrow \mathbb{R}^m$ is defined componentwise by
$$e(x)_i=\left\{
\begin{aligned}
&x_i,~i\in \mathcal{I},\\
&\max\{x_i,0 \},~i\in \mathcal{J}.
\end{aligned}
\right.$$ 
Each $f_i$ is continuously differentiable with a gradient Lipschitz constant $L_i=1$ for $i\in [m]$, and the strong convexity constant of $\psi$ is $\mu=1$.
Applying the SBBP method to (\ref{eq6:7}) yields the randomized sparse Kaczmarz method with inequalities
\begin{equation}
\label{eq6:1}
\left\{
\begin{aligned}
x_{k+1}^*&=x_k^*-t_k\sum_{i\in J_k}w_{i,k}\frac{e(Ax_k-b)_{i}}{\|a_{i}\|^2}a_{i},\\
x_{k+1}&=S_{\lambda}(x_{k+1}^*),
\end{aligned}
\right.
\end{equation}
where the stepsize $t_k$ is chosen either by DecmSPS or projective stepsizes in Lemma \ref{lemma5.1}, and the soft-thresholding operator is defined as $S_{\lambda}(x)=\max\{|x|-\lambda,0\}\cdot \text{sign}(x)$.

When the system (\ref{eq6:6}) is consistent, the deviation function (\ref{eq6:12}) satisfies Assumption \ref{assume.projective}, and therefore $\bar{X}=C$.
Under the Bregman distance growth condition on $F$, Theorem \ref{th4:2} implies that the scheme (\ref{eq6:1}) with DecmSPS converges to $\bar{X}$ for both inconsistent and consistent systems. In the consistent case, Theorem \ref{th5.5} further yields linear convergence in expectation for (\ref{eq6:1}) equipped with projective stepsizes.

\begin{example}
	\label{example3}
	Consider the setting where $\psi(x)=\frac{1}{2}\|x\|^2$ and $|J_k|=1$, and we focus on the consistent linear feasibility problem (\ref{eq6:6}), i.e., $C\neq \emptyset$ and $C=\bar{X}$. 
	Then for the deviation function in (\ref{eq6:12}) we have $f_{i}(x)=\frac{1}{2}\|\nabla f_{i}(x)\|^2,~i\in [m]$, thus DecmSPS in (\ref{eq7:6}) with $\lambda_k=1/c$, $\gamma_b\geq 1/(2c)$ and $l_{i_k}^{*}=0$ simplifies to a constant stepsize $t_k=1/(2c)$ for all $k\geq 0$.
	Consequently, the scheme (\ref{eq6:1}) reduces to the randomized iterated projections method in \cite{leventhal2010randomized}.	
Assume that the rows of $A$ are normalized so that $\|a_i\|=1$ for all $i\in [m]$.
Under the sampling rule $\textbf{P}(i)=1/m,~i\in [m]$, the Bregman distance growth condition with $\gamma$ reduces to the Hoffman error bound with $\beta=1/\sqrt{m\gamma}$ of the form 
\begin{equation}
\label{eq6:10}
\operatorname{dist}(x,C)\leq \beta\|e(Ax-b)\|_2,
\end{equation}
see \cite{hoffman2003approximate_selected,leventhal2010randomized} for more references.
	In particular, by specializing the proof of Theorem \ref{th5.5} to the linear inequalities, we can obtain the linear convergence rate in Theorem 4.3 of \cite{leventhal2010randomized} for the randomized iterated projections method, i.e.,
$$\mathbb{E}[\operatorname{dist}(x_{k+1},C)^2]	\leq	\left(1-\frac{1}{\beta^2m}\right)\mathbb{E}[\operatorname{dist}(x_{k},C)^2].$$
\end{example}


\begin{remark}
	\label{remark5}
\begin{enumerate}[(a)]
	\item The linear convergence of the randomized iterated projections method in \cite{leventhal2010randomized} requires $C\neq\emptyset$. In contrast, our theoretical framework replaces the Hoffman error bound with the more general Bregman distance growth condition, which only requires $\bar{X}\neq\emptyset$ and therefore allows the scheme (\ref{eq6:1}) to handle inconsistent linear inequalities. 	
\item The new scheme (\ref{eq6:1}) is a sparse generalization of the randomized iterated projections method in \cite{leventhal2010randomized}, and its convergence is guaranteed by Theorem \ref{th4:2} and Theorem \ref{th5.5}. By incorporating Bregman projections, the scheme can approximate the solution with sparse structure for both inconsistent and consistent linear inequalities, which is a byproduct of this paper.
\end{enumerate} 
\end{remark}

\begin{example}
	\label{example4}
When $\mathcal{J}=\emptyset$, the linear feasibility problem (\ref{eq6:6}) reduces to the linear system $Ax=b$, and the scheme (\ref{eq6:1}) reduces to the randomized sparse Kaczmarz with averaging (RaSKA) method \cite{tondji2023faster} as follows
\begin{equation}
\label{eq6:14}
\left\{
\begin{aligned}
x_{k+1}^*&=x_k^*-t_k\sum_{i\in J_k}w_{i,k}\frac{a_{i}^Tx_k-b_i}{\|a_{i}\|^2}a_{i},\\
x_{k+1}&=S_{\lambda}(x_{k+1}^*).
\end{aligned}
\right.
\end{equation}
As shown in Example \ref{example2}, we have $\|Ax-b\|_2^2\geq \eta\cdot D_{\psi}^{x^*}(x,\hat{x})$ with $\eta$ given in (\ref{eq6:13}), where $\hat{x}$ is the unique solution of $\min_{x\in\mathbb{R}^n}\psi(x),~s.t.~ Ax=b$.
By Theorem \ref{th4:1}, RaSKA (\ref{eq6:14}) with DecmSPS admits the convergence guarantee for inconsistent linear
systems; related efforts include
\cite{zouzias2013randomized,epperly2025randomized,marshall2023optimal,tondji2024adaptive}. By Theorem \ref{th5.5}, we
further obtain the convergence of RaSKA with projective stepsizes for consistent linear systems, which has been studied in
\cite{tondji2023faster}. 
By specializing the proof of Theorem \ref{th5.5} to the linear systems, we can obtain the linear convergence rate in Theorem 3.2 of \cite{schopfer2019linear} for the
randomized sparse Kaczmarz (RaSK) method
$$\mathbb{E}D_{\psi}^{x_{k}^*}(x_{k},\hat{x})
\leq
\left(1-\frac{\eta}{2m}\right)^k D_{\psi}^{x_{0}^*}(x_{0},\hat{x}).$$
\end{example}

\begin{remark}
\label{remark6}
When the linear system is inconsistent, the convergence of RaSKA in \cite{tondji2023faster} is affected by the convergence horizon induced by errors. In contrast, by extending the DecSPS for SGD to DecmSPS for RaSKA, Theorem \ref{th4:2} establishes exact expected convergence in the inconsistent case.
\end{remark}

\subsubsection{The optimal block size ($\tau\in [1,m]$)}
\label{sec5.1.1}
Assume that the linear feasibility problem (\ref{eq6:6}) is consistent, which implies the intersection $C$ in (\ref{eq6:6}) is nonempty. Building on the analysis in \cite{necoara2022stochastic}, we consider partition sampling with a fixed block size $|J_k|=\tau,~\forall k\geq 0$ for the scheme (\ref{eq6:1}). Let $\kappa$ be a permutation of $[m]$, and assume that $l=m/\tau$ is a positive integer. We define a random row partition $\{J_1,\ldots,J_l\}$ of $[m]$ into $l$ blocks of size $\tau$ by
\begin{equation*}
J_i=\{\kappa(r):~r=(i-1)\tau+1,\ldots,i\tau\},\quad i=1,\ldots,l.
\end{equation*}
At each iteration, we sample an index $i_k\in[l]$ uniformly and set $J_k=J_{i_k}$. For a row-normalized matrix $A$, Lemma 4.1 in \cite{necoara2022stochastic} provides a high-probability upper bound on $\sigma_{\max}^2(A_J)$ for the blocks $J\in\{J_1,\ldots,J_l\}$ in such a random partition.

\begin{lemma}[Lemma 4.1, \cite{necoara2022stochastic}]
	\label{lemma5:1}
Let $A$ be a normalized matrix with $m$ rows and let $\{J_1,\ldots,J_l\}$ be a randomized partition of the row indices with $l\geq \sigma_{\max}^2(A)$. Then, with probability at least $1-m^{-1}$, the row partition satisfies
\begin{equation*}
\max_{J\in\{J_1,\ldots,J_l\}} \sigma_{\max}^2(A_J)\leq 6\log(1+m).
\end{equation*}
\end{lemma}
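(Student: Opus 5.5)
The plan is a matrix Chernoff bound applied to each block, followed by a union bound over the $l$ blocks. Write $\tau=m/l$. Since $\kappa$ is a uniformly random permutation, each block $J\in\{J_1,\ldots,J_l\}$ is, marginally, a uniformly random $\tau$-subset of $[m]$ drawn without replacement. We have
$$
\sigma_{\max}^2(A_J)=\lambda_{\max}(A_J^TA_J)=\lambda_{\max}\Big(\sum_{i\in J}a_ia_i^T\Big),
$$
a sum of $\tau$ positive semidefinite summands. By row normalization each summand satisfies $\lambda_{\max}(a_ia_i^T)=\|a_i\|^2=1$. All summands live in $\mathcal{R}(A^T)$, whose dimension is $d=\operatorname{rank}(A)\le m$. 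For one draw, $\mathbb{E}[a_ia_i^T]=\frac1m A^TA$, so the mean parameter is
$$
\mu_{\max}=\tau\,\frac{\sigma_{\max}^2(A)}{m}=\frac{\sigma_{\max}^2(A)}{l}\le 1,
$$
using the hypothesis $l\ge\sigma_{\max}^2(A)$.

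The first step is the tail bound for a single block. I would invoke Tropp's matrix Chernoff inequality for sampling without replacement from a finite family of PSD matrices. This is legitimate because the Gross--Nesme argument shows that the trace moment generating function for sampling without replacement is dominated by that for sampling with replacement, so the usual with-replacement bound applies. It gives, for $t\ge \mu_{\max}$ (and hence, by monotonicity, with $\mu_{\max}$ replaced by its upper bound $1$),
$$
\mathbf{P}\big(\sigma_{\max}^2(A_J)\ge t\big)\le d\Big(\frac{e\,\mu_{\max}}{t}\Big)^{t}\le m\Big(\frac{e}{t}\Big)^{t}.
$$

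The second step is to set $t=6\log(1+m)$. When $t\ge e^2$ we have $\log(e/t)\le -1$, so $(e/t)^t\le e^{-t}=(1+m)^{-6}$, and each block fails with probability at most $m(1+m)^{-6}$. A union bound over the $l\le m$ blocks then gives a total failure probability of at most $m^2(1+m)^{-6}\le m^{-1}$.

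The condition $t\ge e^2$, i.e.\ $6\log(1+m)\ge e^2\approx 7.39$, holds for $m\ge3$. For $m\in\{1,2\}$ I would argue deterministically: $\sigma_{\max}^2(A_J)\le\|A_J\|_F^2=\tau\le m\le 6\log(1+m)$, since $6\log 2\approx4.16$ already exceeds $2$. In those cases the bound holds with probability one.

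The main obstacle I expect is justifying the without-replacement Chernoff bound, since the summands within a block are dependent. I would handle it by citing Tropp's matrix Chernoff bound together with the Gross--Nesme domination result rather than reproving it. A secondary point is keeping the dimensional factor at most $m$ rather than $n$, which is why I restrict to $\mathcal{R}(A^T)$ and use $d=\operatorname{rank}(A)\le m$. The remaining steps are elementary numerics.
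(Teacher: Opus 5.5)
The paper gives no proof of this lemma; it quotes it from \cite{necoara2022stochastic}, where it goes back to Needell and Tropp's random row-paving bound, and your argument is essentially the standard proof of that bound: the without-replacement matrix Chernoff inequality (via Gross--Nesme) with $B=1$, $\mu_{\max}=\sigma_{\max}^2(A)/l\le 1$ and dimension factor $\operatorname{rank}(A)\le m$, followed by a union bound over the $l\le m$ blocks. Your details check out, including $(e/t)^t\le e^{-t}$ once $t=6\log(1+m)\ge e^2$ (i.e.\ $m\ge 3$), the estimate $m^2(1+m)^{-6}\le m^{-1}$, and the deterministic bound $\sigma_{\max}^2(A_J)\le\|A_J\|_F^2=\tau\le m\le 6\log(1+m)$ for $m\le 2$.
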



\begin{corollary}
	\label{coro5..2}
Under the assumptions of Lemma \ref{lemma5:1}, we choose the \textsl{optimal block size} as
	\begin{equation}
	\label{eq6.10}
	\tau^{*}=\left\lfloor\frac{m}{\sigma_{\max}^2(A)}\right\rfloor.
	\end{equation}
	Assume that $l=m/\tau^{*}$ is a positive integer.
	Then, with probability at least $1-m^{-1}$, we have the following convergence results.
	\begin{enumerate}[(a)]
		\item Let $\{x_k,x_k^*\}$ be generated by the scheme (\ref{eq6:1}) with DecmSPS (\ref{eq7:6}). Then for $c>\frac{1}{2}$ it holds that
		$$
		\mathbb{E}\!\left[\operatorname{dist}_{\psi}^{x_{k}^*}(x_{k},\bar{X})^2\right]
		\leq
		\left(1-\frac{1}{c}\left(1-\frac{1}{2c}\right)\gamma\tilde{L}\right)^k
		\operatorname{dist}_{\psi}^{x_{0}^*}(x_{0},\bar{X})^2,
		$$
	where $\tilde{L}=\min\left\{\frac{\tau^*}{12\log(1+m)},c\gamma_b\right\}$.
		\item Let $\{x_k,x_k^*\}$ be generated by the scheme (\ref{eq6:1}) with either the \textsl{exact projective stepsize} or the \textsl{block adaptive extrapolated stepsize}. Under Assumption \ref{assume.projective}, it holds that
		$$ \mathbb{E}\!\left[\operatorname{dist}_{\psi}^{x_{k}^*}(x_{k},\bar{X})^2\right]
		\leq
		\left(1-\frac{m\gamma}{6\sigma_{\max}^2(A)\log(1+m)}\right)^k\operatorname{dist}_{\psi}^{x_{0}^*}(x_{0},\bar{X})^2.$$
	\end{enumerate}
\end{corollary}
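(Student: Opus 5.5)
Corollary \ref{coro5..2} follows by inserting block-dependent constants into Corollary \ref{coro5}(a) and Theorem \ref{th5.5}. The only problem-specific input is an estimate of the block smoothness constant $L_{\tau^*}^{\operatorname{block}}$, plus an effective smoothness constant for the averaged block function $f_{J}=\frac{1}{\tau}\sum_{i\in J}f_i$, both controlled by $\sigma_{\max}^2(A_J)$ through Lemma \ref{lemma5:1}. Throughout I condition on the event of probability at least $1-m^{-1}$ on which $\max_{J}\sigma_{\max}^2(A_J)\leq 6\log(1+m)$. This is legitimate because $\tau^*=\lfloor m/\sigma_{\max}^2(A)\rfloor$ gives $l=m/\tau^*\geq\sigma_{\max}^2(A)$. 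I also take the rows of $A$ normalized, so that $\mu=1$ and $L_i=1$.

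First, the key estimate. For a block $J$ of size $\tau$, $\nabla f_i(x)=e(Ax-b)_i a_i$, so
\[
\Big\|\tfrac{1}{\tau}\sum_{i\in J}\nabla f_i(x)\Big\|^2=\tfrac{1}{\tau^2}\|A_J^T e(Ax-b)_J\|^2\leq \tfrac{\sigma_{\max}^2(A_J)}{\tau^2}\|e(Ax-b)_J\|^2 .
\]
Since $\tfrac{1}{\tau}\sum_{i\in J}\|\nabla f_i(x)\|^2=\tfrac{1}{\tau}\|e(Ax-b)_J\|^2$, we get $L_{\operatorname{adapt}}^{k,\tau}\leq \sigma_{\max}^2(A_J)/\tau$. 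On the good event this gives $L_{\tau^*}^{\operatorname{block}}\leq 6\log(1+m)/\tau^*$.

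Part (b). I rerun the proof of Theorem \ref{th5.5} with the sharper $L_{\operatorname{adapt}}$ bound inserted, rather than just plugging into its stated rate. By Lemma \ref{lemma5.1}(b), the per-step decrease is $\frac{\mu}{2L_{\operatorname{adapt}}}\cdot\frac{1}{\tau}\|e(Ax_k-b)_{J_k}\|^2\geq \frac{1}{12\log(1+m)}\|e(Ax_k-b)_{J_k}\|^2$. Partition sampling gives $\mathbb{E}\|e(Ax-b)_{J_k}\|^2=\frac{\tau}{m}\|e(Ax-b)\|^2=2\tau F(x)$, so the expected decrease is at least $\frac{\tau^* F(x_k)}{6\log(1+m)}$. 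Consistency gives $\bar{F}=0$, and BDGC (Assumption \ref{assume.2}) then bounds this below by $\frac{\tau^*\gamma}{6\log(1+m)}\operatorname{dist}_{\psi}^{x_k^*}(x_k,\bar X)^2$. Since $\tau^*\approx m/\sigma_{\max}^2(A)$, taking full expectation and iterating yields the stated factor $1-\frac{m\gamma}{6\sigma^2_{\max}(A)\log(1+m)}$. The main obstacle sits in this step. One must apply the descent with $x$ equal to the Bregman projection of $x_k$ onto $\bar X$, which lies in $H_k$ because $\bar X=C\subseteq H_k$. One must also use $\operatorname{dist}_\psi^{x_{k+1}^*}(x_{k+1},\bar X)^2\leq D_\psi^{x_{k+1}^*}(x_{k+1},\Pi_{\bar X}(x_k))$. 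Finally, the floor in $\tau^*$ has to be absorbed; I take $m/\tau^*=l$ exactly, as the statement does, and replace $\tau^*$ by $m/\sigma_{\max}^2(A)$.

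Part (a). I apply Corollary \ref{coro5}(a); consistency gives $\sigma_{\tau}^2=0$, since $l^*_J=0$ and $f_i$ vanishes on $C$. The only change is the lower bound on the stepsize in Lemma \ref{lemma4.1}. Its constant $\mu/(2L_{\max})$ came from $\frac{f_J(x)-l_J^*}{\|\nabla f_J(x)\|^2}\geq \frac{1}{2L_J}$, where $L_J$ is the smoothness constant of $f_J$. Here $f_J(x)=\frac{1}{2\tau}\|e(A_Jx-b_J)\|^2$, and $e$ is $1$-Lipschitz and monotone, so $\nabla f_J$ is $\sigma_{\max}^2(A_J)/\tau$-Lipschitz. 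Hence $L_J\leq 6\log(1+m)/\tau^*$. With $\lambda_k=1/c$, the mSPS term is therefore at least $\frac{1}{c}\cdot\frac{\tau^*}{12\log(1+m)}$, while the cap is $\gamma_b$. This gives $\tilde L=\min\{\frac{\tau^*}{12\log(1+m)},c\gamma_b\}$ in the normalization of Theorem \ref{th4:2} (the $\lambda_k$ factor is pulled out). Substituting into Corollary \ref{coro5}(a) finishes the argument.
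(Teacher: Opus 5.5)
Your proposal is correct and follows the route the paper evidently intends (the paper gives no separate proof of this corollary). You bound $L_{\operatorname{adapt}}^{k,\tau}$ and the smoothness constant of $f_J$ by $\sigma_{\max}^2(A_J)/\tau\le 6\log(1+m)/\tau^*$ via Lemma~\ref{lemma5:1}, sharpen the stepsize lower bound of Lemma~\ref{lemma4.1} for (a), and for (b) rerun the descent of Theorem~\ref{th5.5} using BDGC directly rather than through the PL inequality, which is exactly what produces a rate linear in $\gamma$.

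The one caveat is your final substitution in (b). Integrality of $l$ does not give $\tau^*=m/\sigma_{\max}^2(A)$, and since $\tau^*\le m/\sigma_{\max}^2(A)$ the replacement goes in the unfavorable direction. What you actually prove is the factor $1-\frac{\tau^*\gamma}{6\log(1+m)}=1-\frac{m\gamma}{6l\log(1+m)}$, which matches the stated one only when $l=\sigma_{\max}^2(A)$. This imprecision is already present in the statement itself.
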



Corollary \ref{coro5..2} establishes linear convergence of the block Bregman projection method for linear feasibility problems and provides an explicit choice of the block size $\tau$ in (\ref{eq6.10}).
It not only extends the linear convergence result of Corollary 4.1 in \cite{necoara2022stochastic} from the block orthogonal projection method to the block Bregman projection method, but also shows that, in the Euclidean setting ($\psi(x)=\frac{1}{2}\|x\|^2$), Corollary \ref{coro5..2} (b) recovers the convergence rate in Corollary 4.1 of \cite{necoara2022stochastic} under $m\gamma=1/\beta^2$ in Example \ref{example3}.
Moreover, in view of Corollary \ref{coro5..2} (a), we obtain a principled choice of the parameters $\gamma_b$ and $c$ for DecmSPS (\ref{eq7:6}).

\begin{corollary}
	\label{corol5.3}
	To ensure that the DecmSPS rule in (\ref{eq7:6}) is adaptive, the parameters $\gamma_b$ and $c$ should satisfy
	\begin{equation}
	\label{eq7:7}
	c\gamma_b\geq \frac{\tau}{12\log(1+m)}.
	\end{equation}
\end{corollary}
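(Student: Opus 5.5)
The plan is to trace where the lower bound $\tilde{L}$ in Corollary \ref{coro5..2}(a) comes from, and to read Corollary \ref{corol5.3} as the requirement that the mSPS branch of (\ref{eq7:6}) can actually be the active one, rather than the cap $t_{k-1}\le\gamma_b$. With $\lambda_k=1/c$, the stepsize (\ref{eq7:6}) is $t_k=\min\{\frac{\mu}{c}\,s_k,\,t_{k-1}\}$, where $s_k=\frac{f_{J_k}(x_k)-l_{J_k}^*}{\|\nabla f_{J_k}(x_k)\|^2}$ and $t_{-1}=\gamma_b$. Since $t_k\le t_{k-1}\le\gamma_b$, the rule collapses to the constant stepsize $t_k\equiv\gamma_b$ whenever $\frac{\mu}{c}s_k\ge\gamma_b$ for every admissible $k$. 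In that regime nothing adaptive ever happens. So it suffices to show that $\gamma_b$ must exceed the guaranteed lower bound of $\frac{\mu}{c}s_k$. This is exactly the point of Remark \ref{remark4.1}(a), specialized to the block setting.

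First, I would compute the block smoothness constant for the scheme (\ref{eq6:1}) under the partition sampling of Section \ref{sec5.1.1}, with uniform weights $1/\tau$ and normalized rows. The block function is $f_J(x)=\frac{1}{2\tau}\|e(A_Jx-b_J)\|^2$, whose gradient is $\frac{1}{\tau}A_J^Te(A_Jx-b_J)$. Because $e$ is componentwise $1$-Lipschitz and monotone, $\nabla f_J$ is Lipschitz with constant $L_J=\sigma_{\max}^2(A_J)/\tau$. By Lemma \ref{lemma5:1}, which applies because $\tau=\tau^*$ gives $l\ge\sigma_{\max}^2(A)$, we get $L_J\le 6\log(1+m)/\tau$ with probability at least $1-m^{-1}$.

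Second, I would bound $s_k$ from below using the standard Polyak inequality for an $L_J$-smooth convex function with infimum at least $l_J^*$, namely $\|\nabla f_J(x)\|^2\le 2L_J(f_J(x)-l_J^*)$. This gives $s_k\ge\frac{1}{2L_J}\ge\frac{\tau}{12\log(1+m)}$, using $\mu=1$. This reproduces the first entry of $\tilde{L}$ in Corollary \ref{coro5..2}(a), after multiplication by $c$, consistent with $\tilde{L}=\min\{\frac{\mu}{2L_{\max}},\frac{\gamma_b}{\lambda_0}\}$ from Lemma \ref{lemma4.1}. Consequently $\frac{\mu}{c}s_k\ge\frac{\tau}{12c\log(1+m)}$.

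Third, I would conclude as follows. If $\gamma_b<\frac{\tau}{12c\log(1+m)}$, then every mSPS candidate is at least the cap, so $t_k=\gamma_b$ for all $k$ and the rule is non-adaptive. Adaptivity therefore requires $\gamma_b\ge\frac{\tau}{12c\log(1+m)}$, which is (\ref{eq7:7}). Equivalently, the minimum defining $\tilde{L}$ is attained at the smoothness term $\frac{\tau}{12\log(1+m)}$ and not at $c\gamma_b$.

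The main obstacle is the inequality $\|\nabla f_J\|^2\le 2L_J(f_J-l_J^*)$. It needs $l_J^*$ to be a true lower bound for $\inf f_J$, which holds here since $f_J\ge 0$ and we take $l_J^*=0$. The other delicate point is the inequality direction at the boundary, where the non-strict form in (\ref{eq7:7}) is a convention. I would also state explicitly that "adaptive" is meant in the sense of Remark \ref{remark4.1}(a), and that the whole statement holds on the high-probability event of Lemma \ref{lemma5:1}.
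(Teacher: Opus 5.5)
Your proposal is correct and is essentially the paper's own reasoning, which the paper leaves implicit. The paper obtains (\ref{eq7:7}) by combining Remark~\ref{remark4.1}(a) (if $\gamma_b$ does not exceed the guaranteed lower bound of the Polyak term, then (\ref{eq7:6}) collapses to $t_k\equiv\gamma_b$) with the block-smoothness bound behind $\tilde{L}$ in Corollary~\ref{coro5..2}(a); you make that bound explicit through $L_J=\sigma_{\max}^2(A_J)/\tau\le 6\log(1+m)/\tau$ and $\|\nabla f_J(x)\|^2\le 2L_J\,(f_J(x)-l_J^*)$, and your induction shows the stepsize stays constant even when $c\gamma_b=\frac{\tau}{12\log(1+m)}$, so adaptivity forces the strict inequality and (\ref{eq7:7}) follows \emph{a fortiori} with no boundary convention needed.
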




\subsection{The split feasibility problems}
\label{sec5.2}
As a second application, we consider the split feasibility problem \cite{lorenz2014linearized,schopfer2019linear} of the form
\begin{equation}
\label{eq7:5}
\operatorname{find}~x\in C=\cap_{i=1}^{m}C_i:=\{x\in\mathbb{R}^n\mid A_ix\in Q_i\},
\end{equation}
where $A_i\in\mathbb{R}^{m_i\times n}$ is a matrix, $Q_i\subset \mathbb{R}^{m_i}$ is a closed convex set, and $\sum_{i=1}^{m}m_i=M$. In the special case $Q_i=\{b_i\}$ with $b_i\in\mathbb{R}^{m_i}$, the split feasibility problem simplifies to the linear system $Ax=b$ with $A=[A_1;\ldots;A_m]\in\mathbb{R}^{M\times n}$ and $b=[b_1;\ldots;b_m]\in\mathbb{R}^{M}$.

As described in Section \ref{sec3.1}, the stochastic reformulation of (\ref{eq7:5}) is given by
\begin{equation}
\label{eq5:3}
\min_{x\in\mathbb{R}^{n}}
\psi(x),~s.t.~x\in \arg\min_{x\in\mathbb{R}^{n}} F(x)=\mathbb{E}[f_i(x)],
\end{equation}
	where $f_i(x)= \frac{1}{2}\operatorname{dist}(A_i x,Q_i)^2$. The gradient Lipschitz constant of $f_i$ is $L_i=\|A_i\|_2^2$ for $i\in [m]$.
Applying the SBBP method to solve (\ref{eq7:5}) yields
\begin{equation}
\label{eq6:17}
\left\{
\begin{aligned}
x_{k+1}^*&=x_k^*-t_k\sum_{i\in J_k} w_{i,k}A_i^T( A_i x_k -P_{Q_i}(A_ix_k)),\\
x_{k+1}&=\nabla\psi^{*}(x_{k+1}^*),
\end{aligned}
\right.
\end{equation}
where the stepsize $t_k$ may be chosen as the DecmSPS in (\ref{eq4:1}), \textsl{exact projective stepsize} computed by (\ref{eq4.6}), or \textsl{inexact projective stepsize} in (\ref{eq6.9}).

The algorithmic scheme (\ref{eq6:17}) can be viewed as a variant of the randomized Bregman projection (RBP) method in \cite{lorenz2014linearized,schopfer2019linear}. From the cut-and-project perspective, the main difference between the scheme (\ref{eq6:17}) and RBP lies in the construction of the separating halfspace.
In detail, let $|J_k|=1$ and $c_k=A_{i_k}x_k-P_{Q_{i_k}}(A_{i_k}x_k)$, then the separating halfspace of (\ref{eq6:17}) simplifies to
$$H_k=\left\{x\in \mathbb{R}^n\mid \langle A_{i_k}^Tc_k ,x\rangle
\leq \langle A_{i_k}^Tc_k ,x_k\rangle -\frac{\|A_{i_k}^Tc_k\|^2}{\|A_{i_k}\|_2^2}\right\},$$
while the RBP method employs	$$\hat{H}_k=\left\{x\in \mathbb{R}^n\mid \langle A_{i_k}^Tc_k ,x\rangle
\leq \langle A_{i_k}^Tc_k ,x_k\rangle -\|c_k\|^2\right\}.$$
Since $\|A_{i_k}^Tc_k\|^2\leq \|A_{i_k}\|^2_2\|c_k\|^2$, we have $\bar{X}\subset\hat{H}_k\subset H_k$ for any $A_{i_k}\in\mathbb{R}^{m_i\times n}$, and $\hat{H}_k= H_k$ for $m_i=1$ and $A_{i_k}^T\in\mathbb{R}^{n}$.
RBP constructs a geometrically tighter separating halfspace at each iteration.
Moreover, we endow the scheme (\ref{eq6:17}) with DecmSPS and the mini-batch technique.

For the scheme (\ref{eq6:17}), Theorem \ref{th4:1} and Theorem \ref{th3} establish sublinear convergence rates in expectation; by combining Theorem \ref{th4:2} with Theorem \ref{th5.5}, we obtain linear convergence of the scheme (\ref{eq6:17}).
The convergence analysis for RBP in \cite{schopfer2019linear} and for the induced scheme (\ref{eq6:17}) are carried out in different analytical settings.
The analysis of RBP is formulated under feasibility and regularity assumptions on the underlying sets, whereas our analysis is derived from the stochastic reformulation framework and the property of the inner objective $F$. This reformulation allows consistent and inconsistent CFPs to be treated within a unified framework, which is especially relevant in practice since whether a CFP is consistent is often unknown in advance. This difference in modeling and analysis is one of the main theoretical contributions of the paper.

\section{Numerical experiments}
\label{sec6}
In this section, we conduct numerical experiments to illustrate the impact of the stochastic block Bregman projection method versus the stochastic block projection method \cite{necoara2022stochastic} and the randomized Bregman projection method \cite{schopfer2019linear}.
Section \ref{sec6.1} investigates the performance of Algorithm \ref{al1} for solving linear inequalities.
In Section \ref{sec6.2}, Algorithm \ref{al1} is adapted to split feasibility problems by means of suitable choices of deviation functions. 
In each test, we compare three different stepsizes as follows:

\begin{enumerate}[(a)]	
	\item The DecmSPS is given by $t_k=\lambda_k \min\left\{ \mu\cdot\frac{\sum_{i\in J_k} w_{i,k} (f_i(x_k)-l_{J_k}^{*}) }{\|\sum_{i\in J_k} w_{i,k}\nabla f_i(x_k)\|^2},\frac{t_{k-1}}{\lambda_{k-1}}\right\}$ for $\lambda_k$ according to Corollary \ref{coro4:1}.	
	\item The \textsl{exact projective stepsize} as described in Theorem \ref{th4.1}.	
	\item The \textsl{block adaptive extrapolated stepsize}	$t_k=\frac{\mu}{L_{\max}L_{\operatorname{adapt}}^{k,\tau}}$ with 	$L_{\operatorname{adapt}}^{k,\tau}=\frac{\|\sum_{i\in J_k} w_{i,k}\nabla f_i(x_k)\|^2}{\sum_{i\in J_k} w_{i,k}\|\nabla f_i(x_k)\|^2}$ and $L_{\max}=\max_{i\in [m]}L_i$.\end{enumerate}

Throughout this section, we set $\lambda=1,|J_k|=\tau,w_{i,k}=\frac{1}{\tau}$ and the sampling distribution $\textbf{P}$ is chosen to be the uniform distribution, that is, $\textbf{P}(i)=1/m$ for all $i\in[m]$ and $p_i=\textbf{P}(i\in J_k)=\tau/m$. All algorithms are started from the initial vector $x_0=0\in\mathbb{R}^n$.

\subsection{Linear inequalities}
\label{sec6.1}

We evaluate the performance of Algorithm \ref{al1} on consistent linear inequalities of the form $Ax\le b$, following the formulation in Section \ref{sec5.1}. The measurement matrices $A\in\mathbb{R}^{m\times n}$ are generated using the MATLAB function \texttt{randn} and normalized by rows. The sparse ground-truth vector $\hat{x}\in\mathbb{R}^{n}$ is generated similarly with sparsity $s$, and we set $b=A\hat{x}$. In each test, we report the median of the relative error $\|x_k-\hat{x}\|/\|\hat{x}\|$ and the residual $\|\max\{Ax_k-b,0\}\|$ over 50 trials.

\subsubsection{The effect of parameters $c$ and $\gamma_b$}
\label{sec6.1.1}
We examine how the parameters $c>0$ and $\gamma_b>0$ affect the behavior of SBBP with DecmSPS on linear feasibility problems.
Following Corollary \ref{coro5} (a), we use $\lambda_k=1/c$ and set $t_{-1}=\gamma_b$ in the DecmSPS. Let $s=20$ and $\tau=50$.

In the first test, we fix $\gamma_b=100$ and vary the values of $c$ in $\{0.1,0.2,0.5,1\}$. As shown in Fig.~\ref{fig2}, all choices of $c$ lead to convergence, but smaller values of $c$ yield a faster decay in terms of both the relative error and residual. In particular, the curves for $c=0.1$ and $c=0.2$ decay faster than those for $c=0.5$ and $c=1$. A similar empirical trend was also observed in \cite{loizou2021stochastic}.

\begin{figure}[H]
	\centering
	\subfigure[$A\in\mathbb{R}^{400\times 100}$]{\includegraphics[width=0.23\linewidth,height=0.22\textwidth]{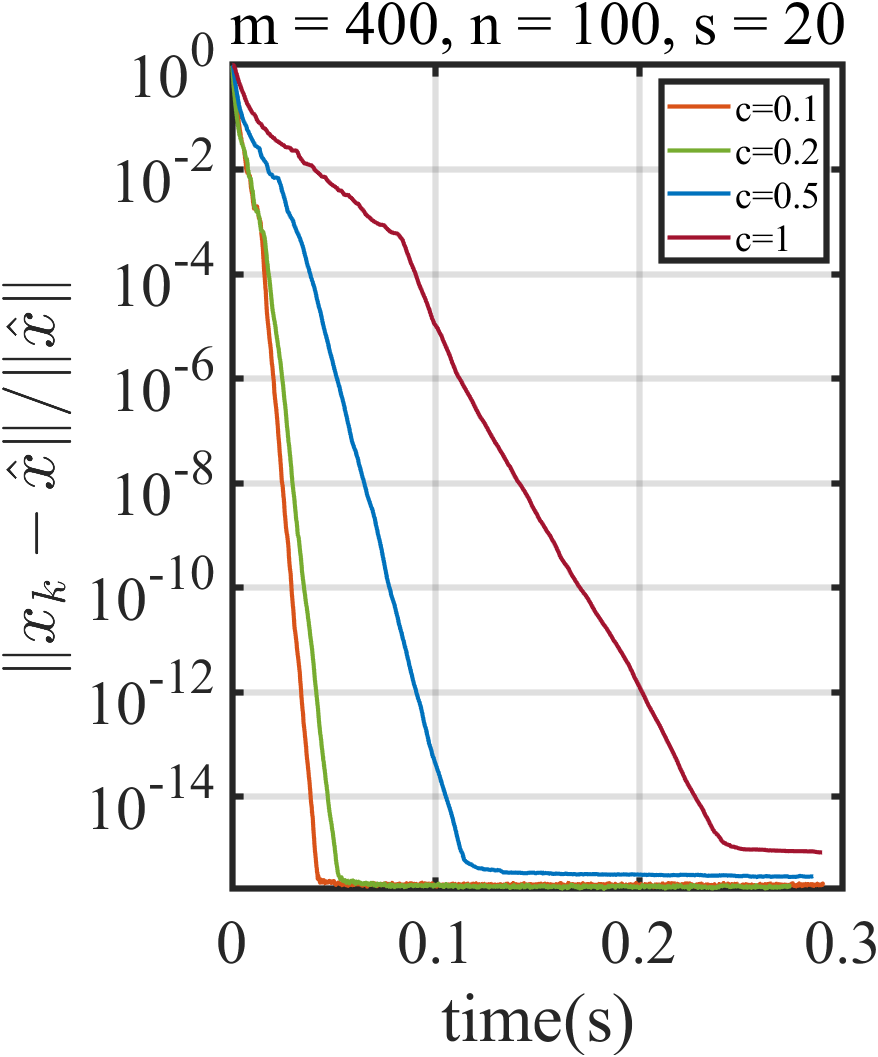}
		\includegraphics[width=0.23\linewidth,height=0.22\textwidth]{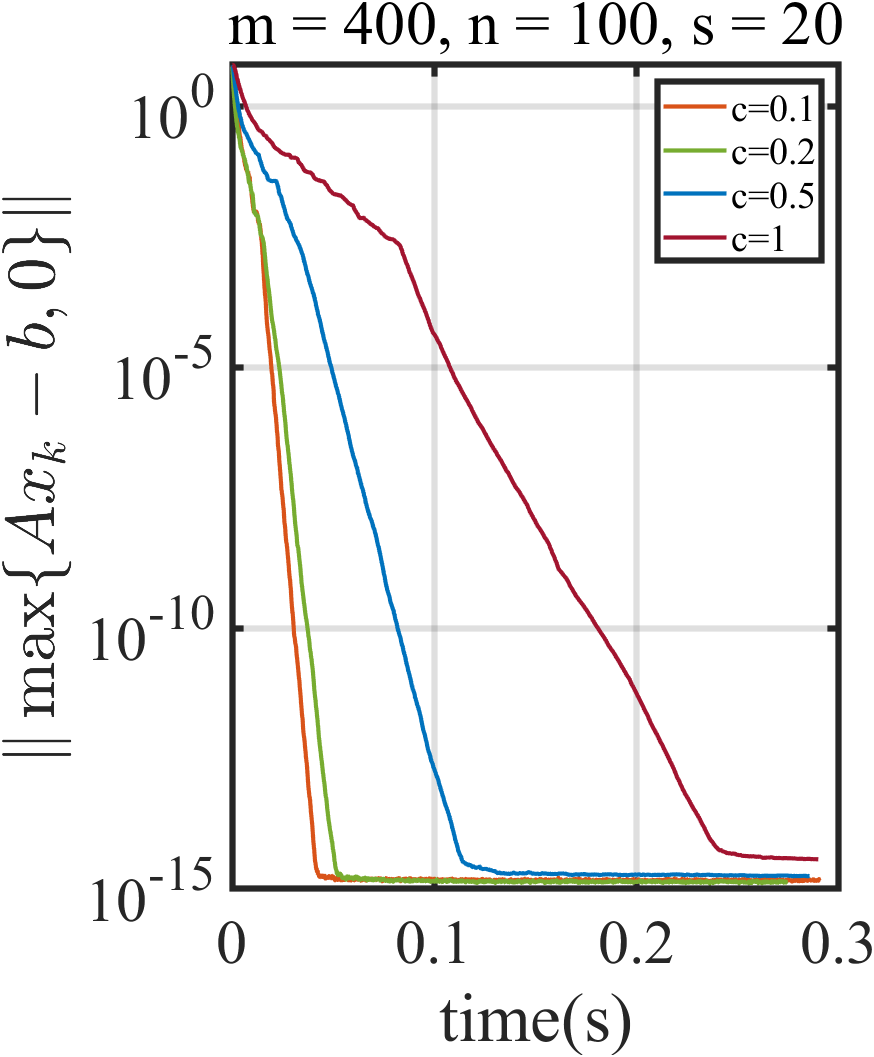}}
	\subfigure[$A\in\mathbb{R}^{800\times 200}$]{
		\includegraphics[width=0.23\linewidth,height=0.22\textwidth]{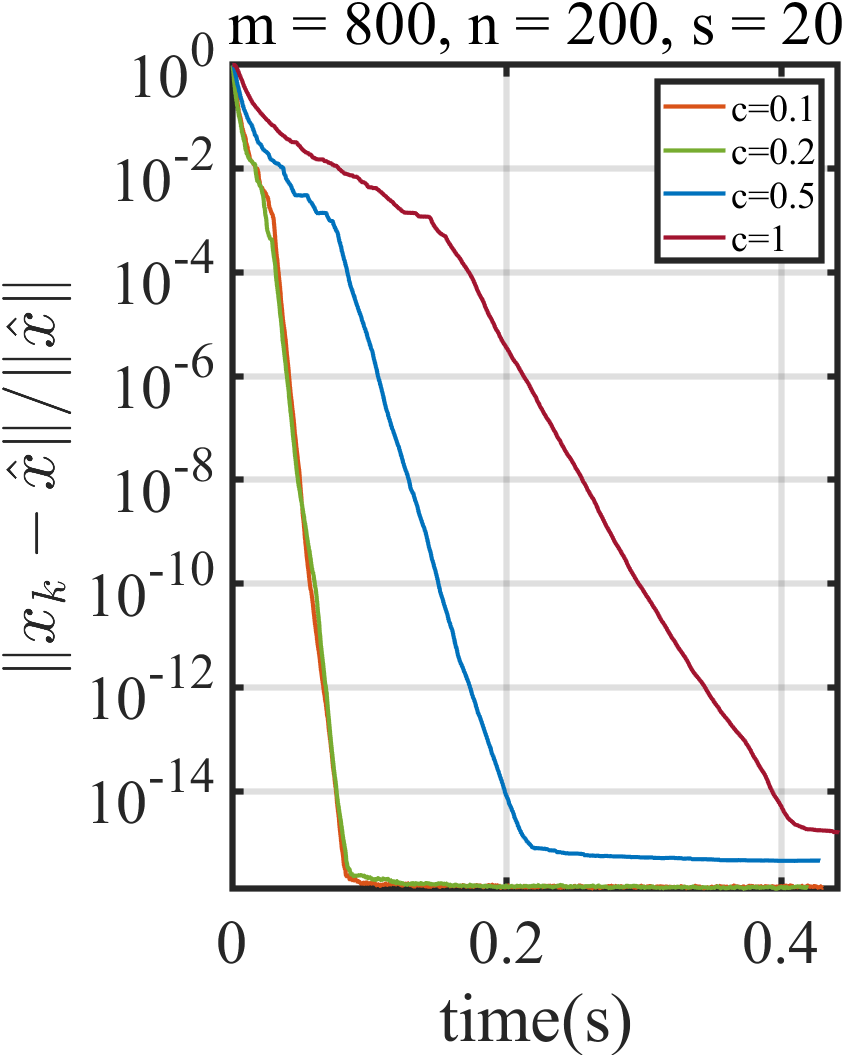}
		\includegraphics[width=0.23\linewidth,height=0.22\textwidth]{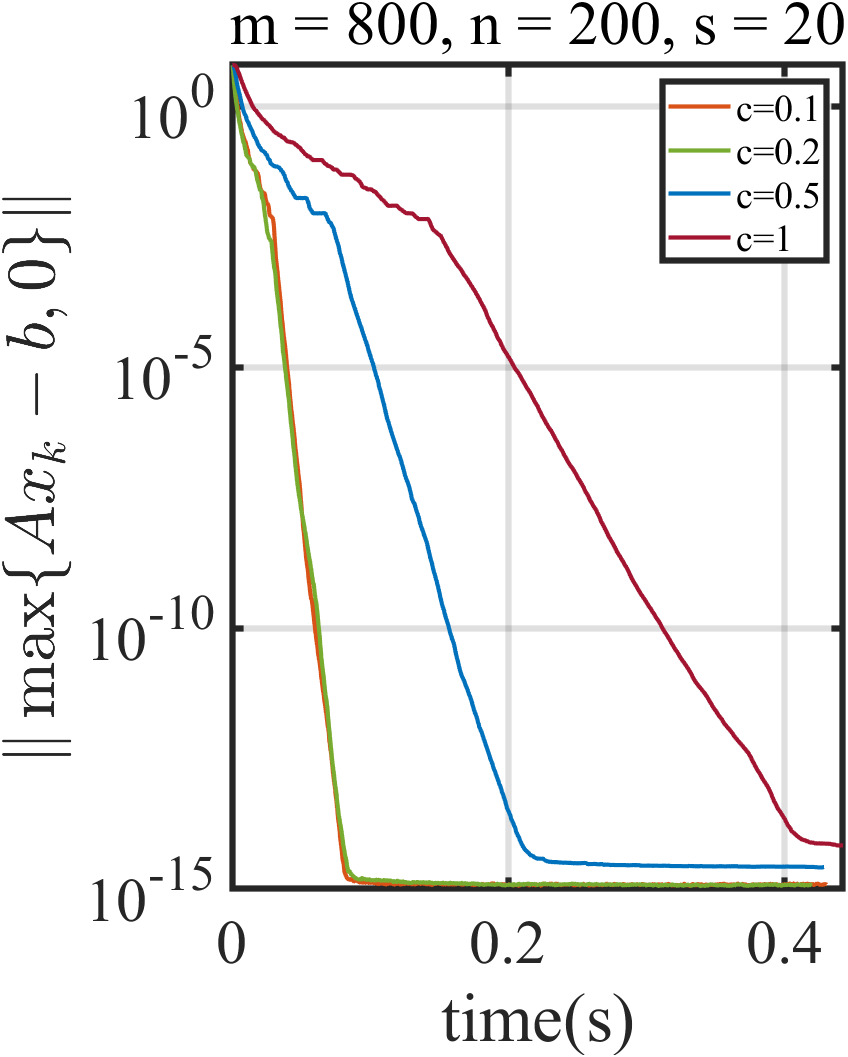}}
	\caption{The performance of SBBP with DecmSPS for $\gamma_b=100$ and varying $c\in\{0.1,0.2,0.5,1\}$}
	\label{fig2}
\end{figure}

In the second test, we fix $c=0.2$ and vary the values of $\gamma_b$ in $\{2,20,50,100\}$.
According to Fig.~\ref{fig3}, we find that larger values of $\gamma_b$ lead to faster convergence, and the convergence speed of $\gamma_b=50$ almost coincides with that of $\gamma_b=100$, while both are faster than those for $\gamma_b=2$ and $\gamma_b=20$.  
As discussed in Remark \ref{remark4.1}, if $\gamma_b$ is chosen too small, the \textsl{decreasing mirror stochastic Polyak stepsize} is truncated and degenerates to a constant stepsize. Once $\gamma_b$ is large enough to activate the stochastic Polyak stepsize, further increasing $\gamma_b$ has essentially no additional benefit. 

\begin{figure}[H]
	\centering
	\subfigure[$A\in\mathbb{R}^{400\times 100}$]{\includegraphics[width=0.23\linewidth,height=0.22\textwidth]{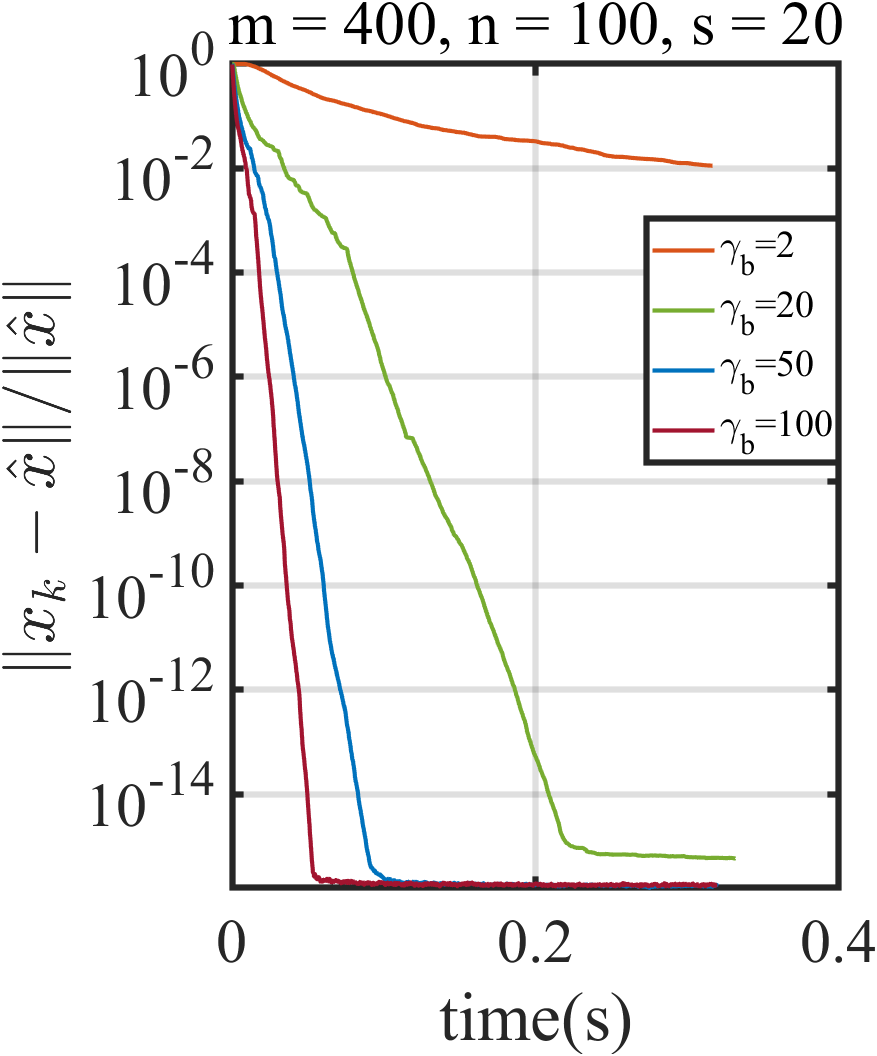}
		\includegraphics[width=0.23\linewidth,height=0.22\textwidth]{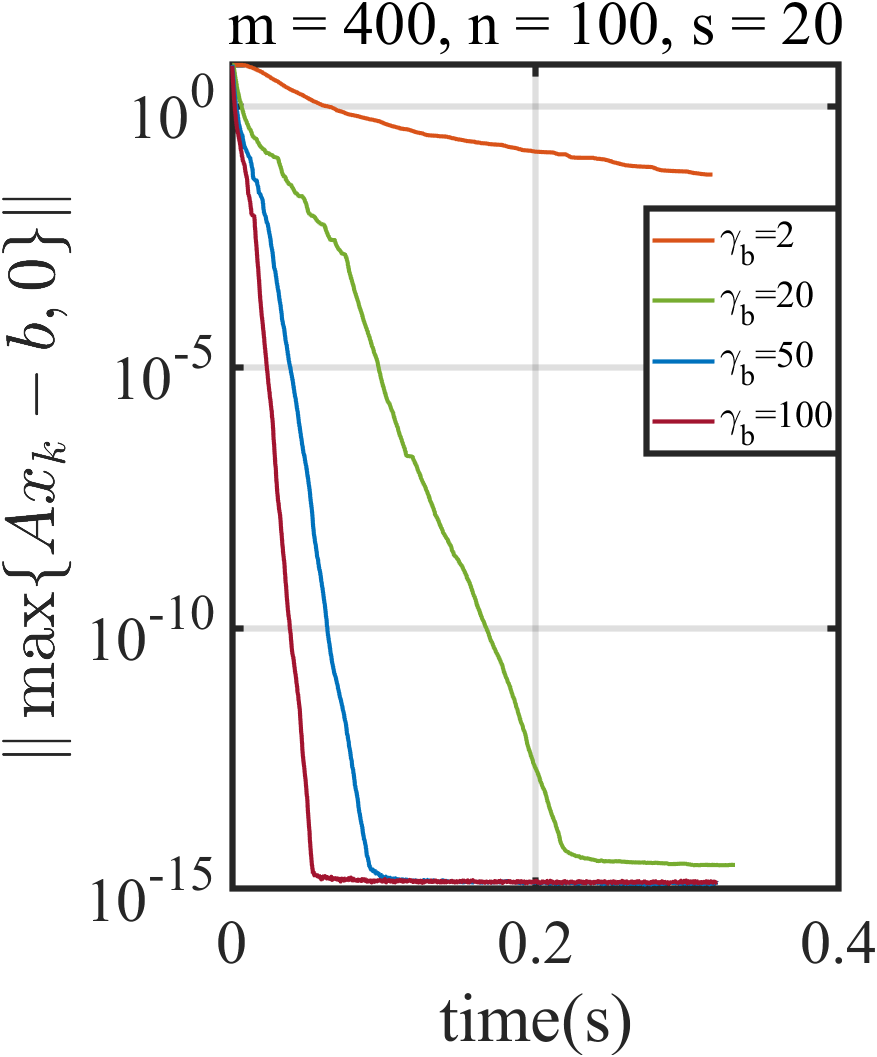}}
	\subfigure[$A\in\mathbb{R}^{800\times 200}$]{
		\includegraphics[width=0.23\linewidth,height=0.22\textwidth]{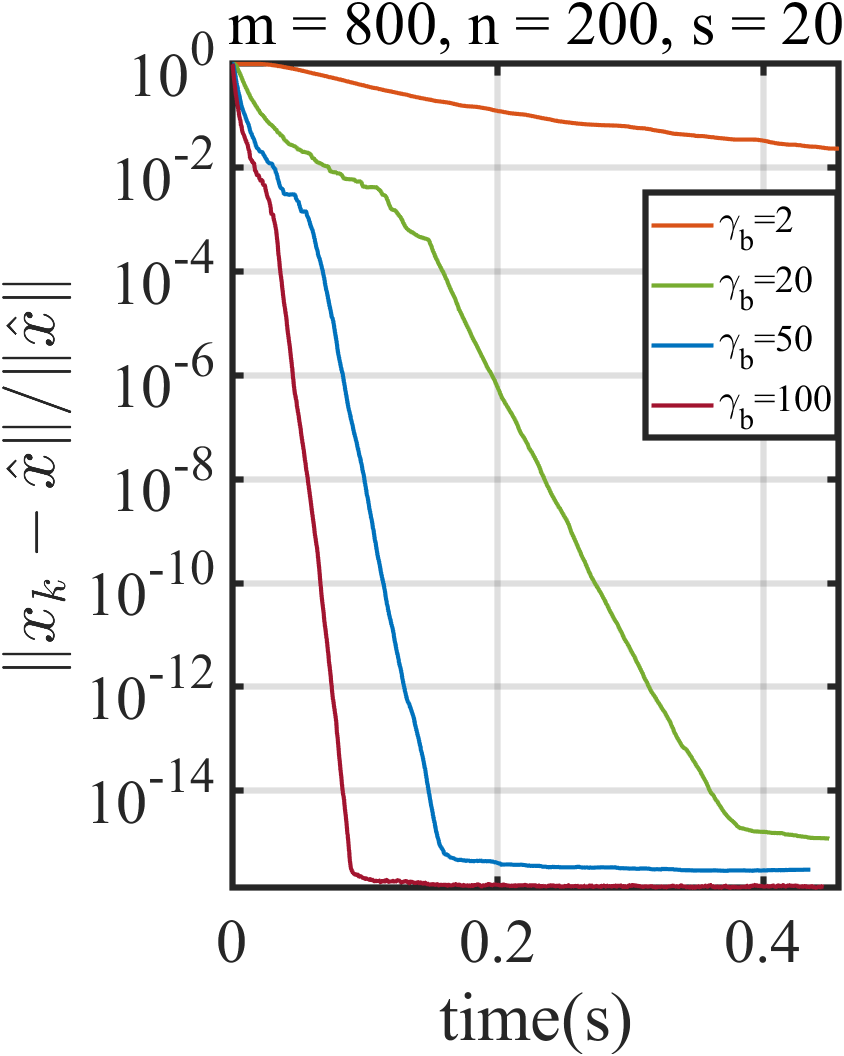}
		\includegraphics[width=0.23\linewidth,height=0.22\textwidth]{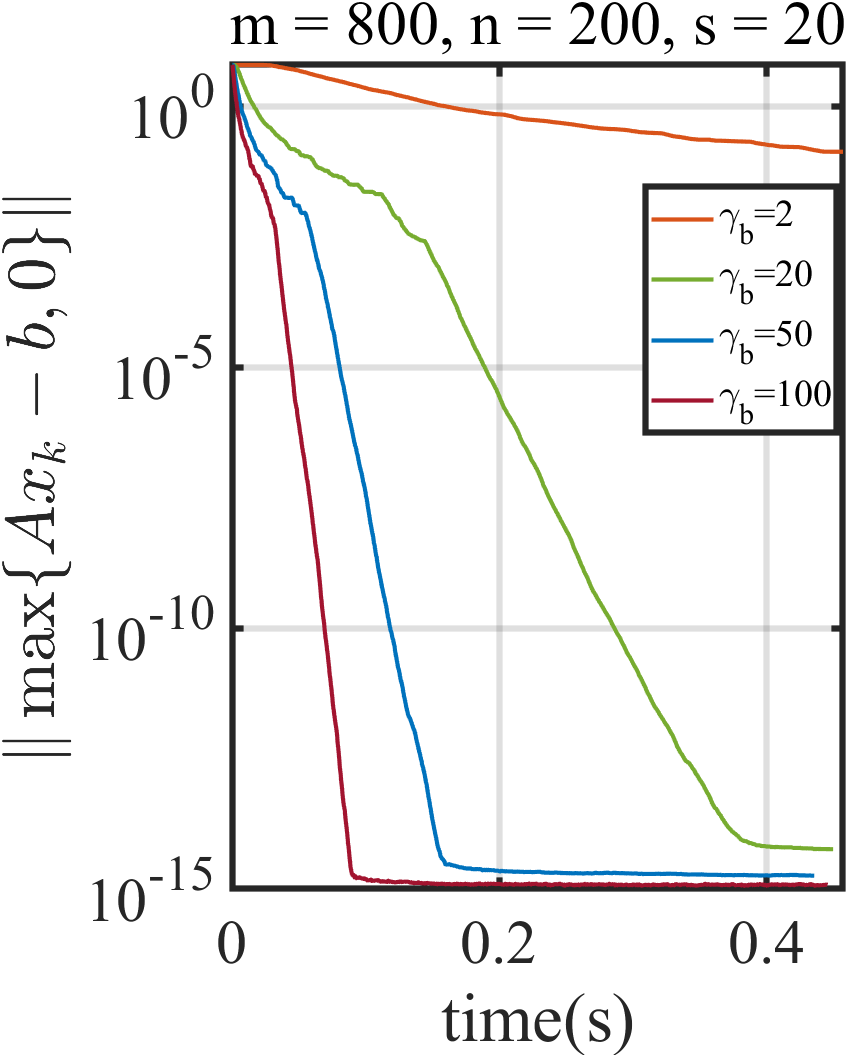}}
	\caption{The performance of SBBP with DecmSPS under $c=0.2$ and $\gamma_b\in\{2,20,50,100\}$}
	\label{fig3}
\end{figure}

\subsubsection{The effect of block size $\tau$}
\label{sec6.1.2}
Here we explore the influence of the block size $\tau$ on Algorithm \ref{al1}; in particular, we test the effectiveness of the optimal block size $\tau^*$ from Corollary \ref{coro5..2}.
For each of the three stepsizes, we run (\ref{eq6:1}) with different block sizes $\tau\in\{20,100,200\}$ together with the optimal block size $\tau^*$. Let $c=0.5$ and $\gamma_b=100$ in DecmSPS.

As shown in Fig.~\ref{fig1}, we observe that the curves for $\tau^*$ exhibit the fastest decay in terms of both relative error and residual, while both smaller and larger block sizes lead to slower convergence. The advantage of $\tau^*$ is especially pronounced for the \textsl{exact projective stepsize}, while for the \textsl{block adaptive extrapolated stepsize} the improvement is more moderate. These results provide numerical support for Corollary \ref{coro5..2} and suggests that the optimal block size $\tau^*$ is effective and feasible in practice.

\begin{figure}[H]
	\centering
	\subfigure[\textsl{decreasing mirror stochastic Polyak stepsize}]{\includegraphics[width=0.23\linewidth,height=0.22\textwidth]{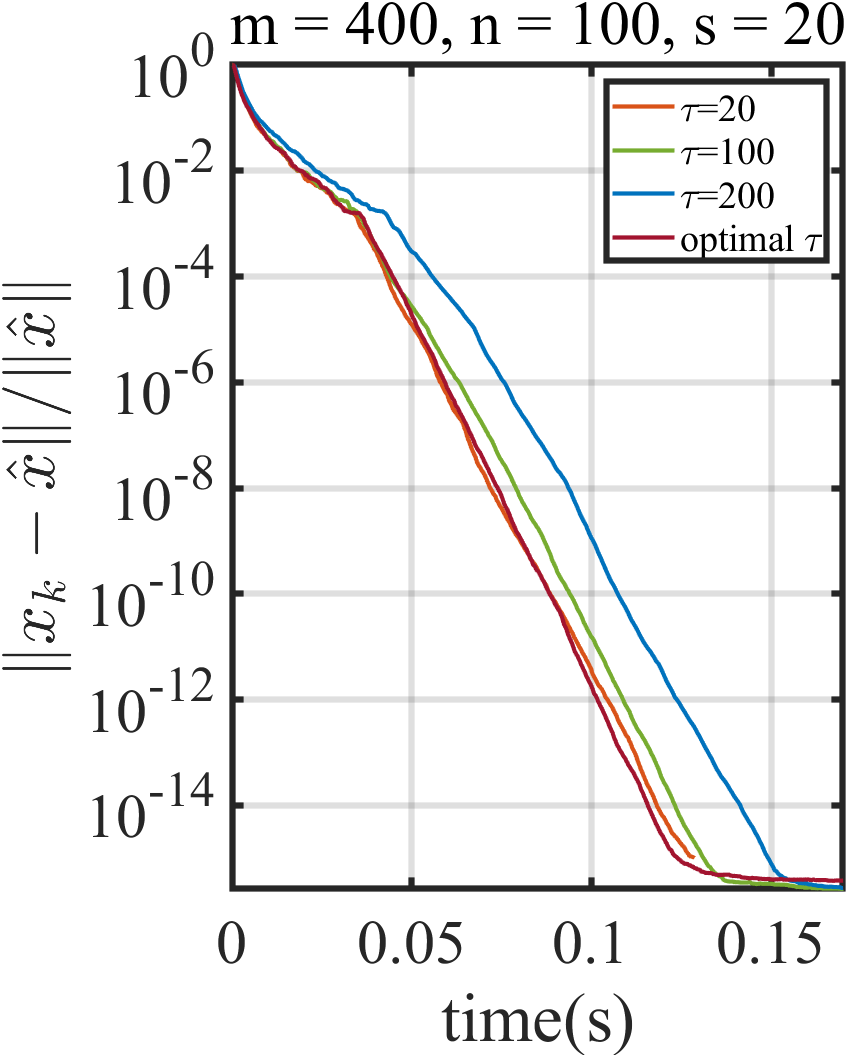}
		\includegraphics[width=0.23\linewidth,height=0.22\textwidth]{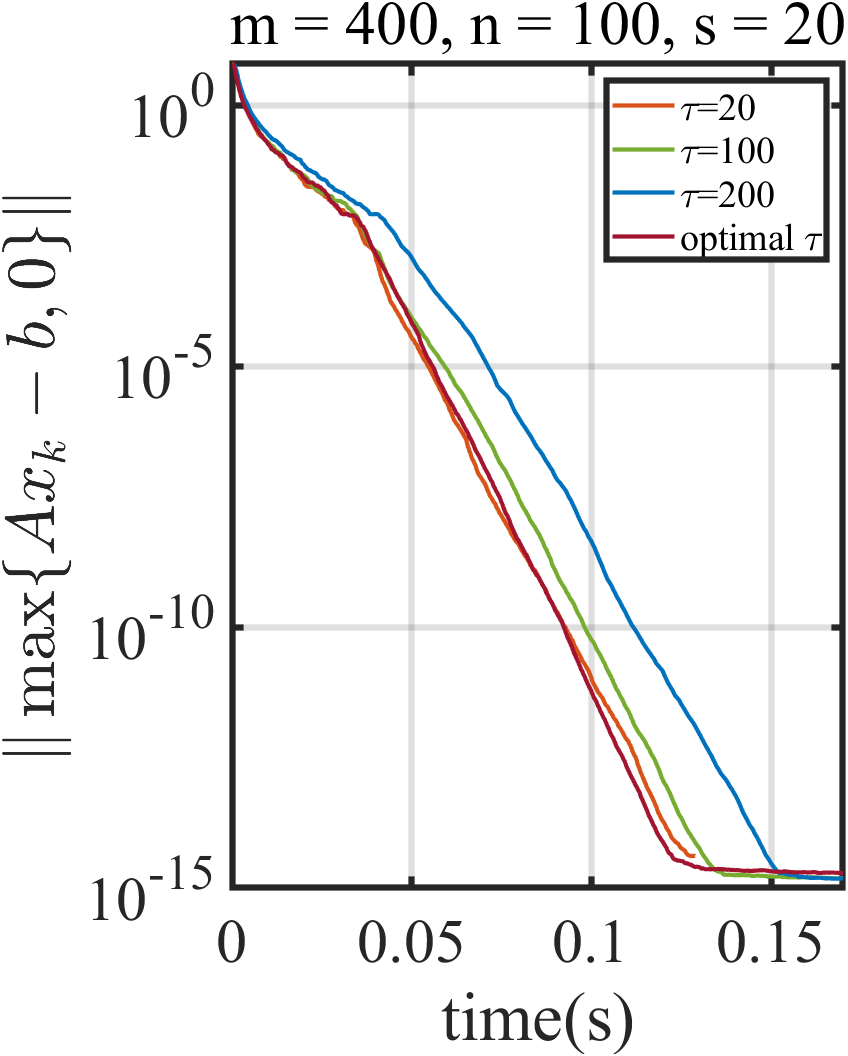}}
	\subfigure[\textsl{block adaptive extrapolated stepsize}]{
		\includegraphics[width=0.23\linewidth,height=0.22\textwidth]{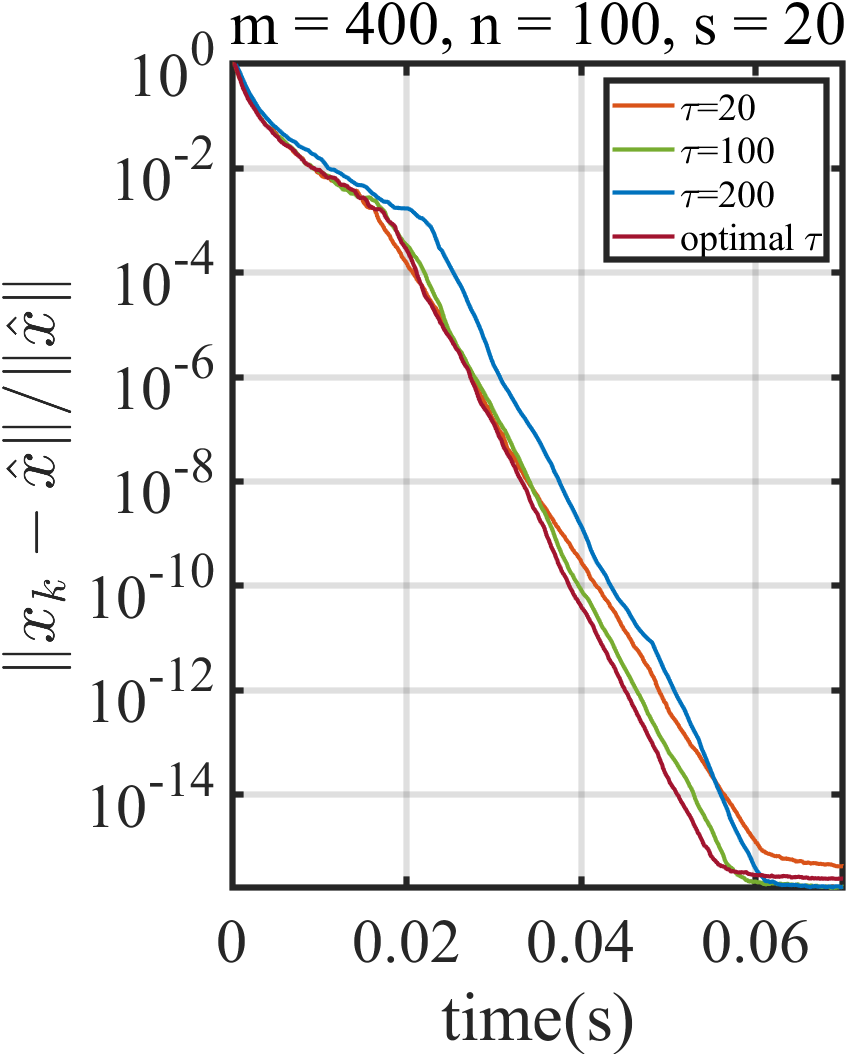}
		\includegraphics[width=0.23\linewidth,height=0.22\textwidth]{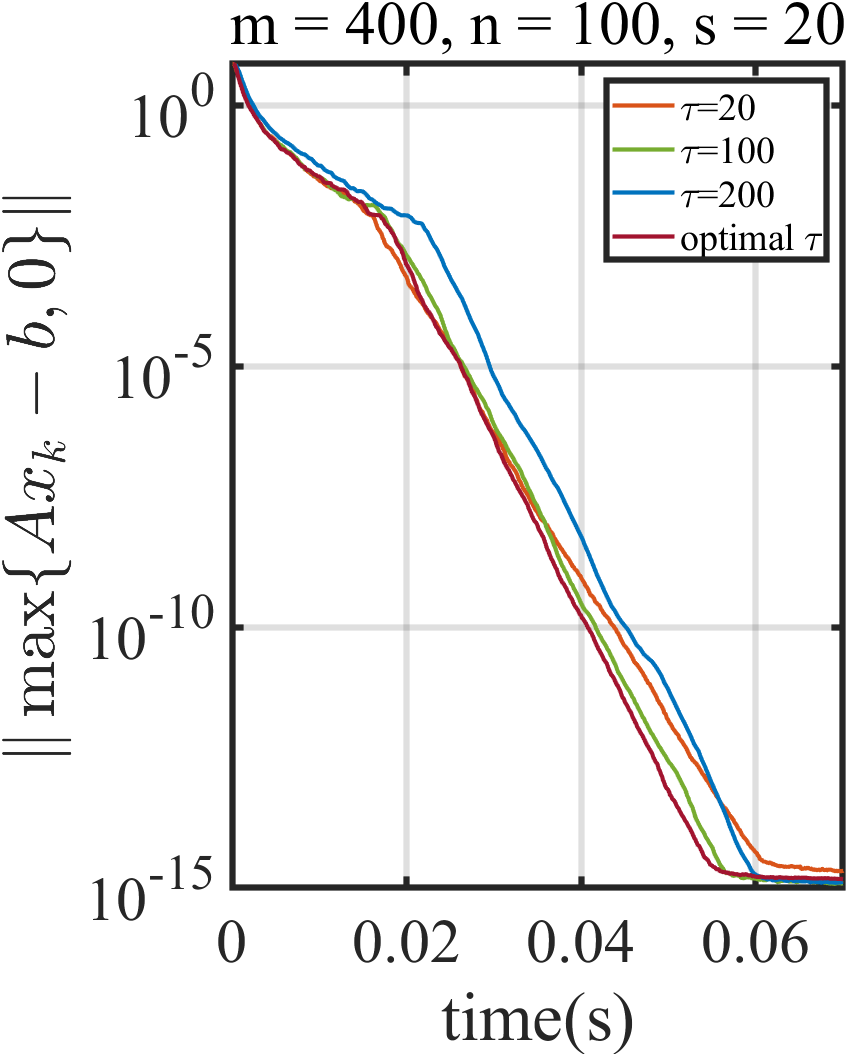}}
	\subfigure[\textsl{exact projective stepsize}]{
		\includegraphics[width=0.23\linewidth,height=0.22\textwidth]{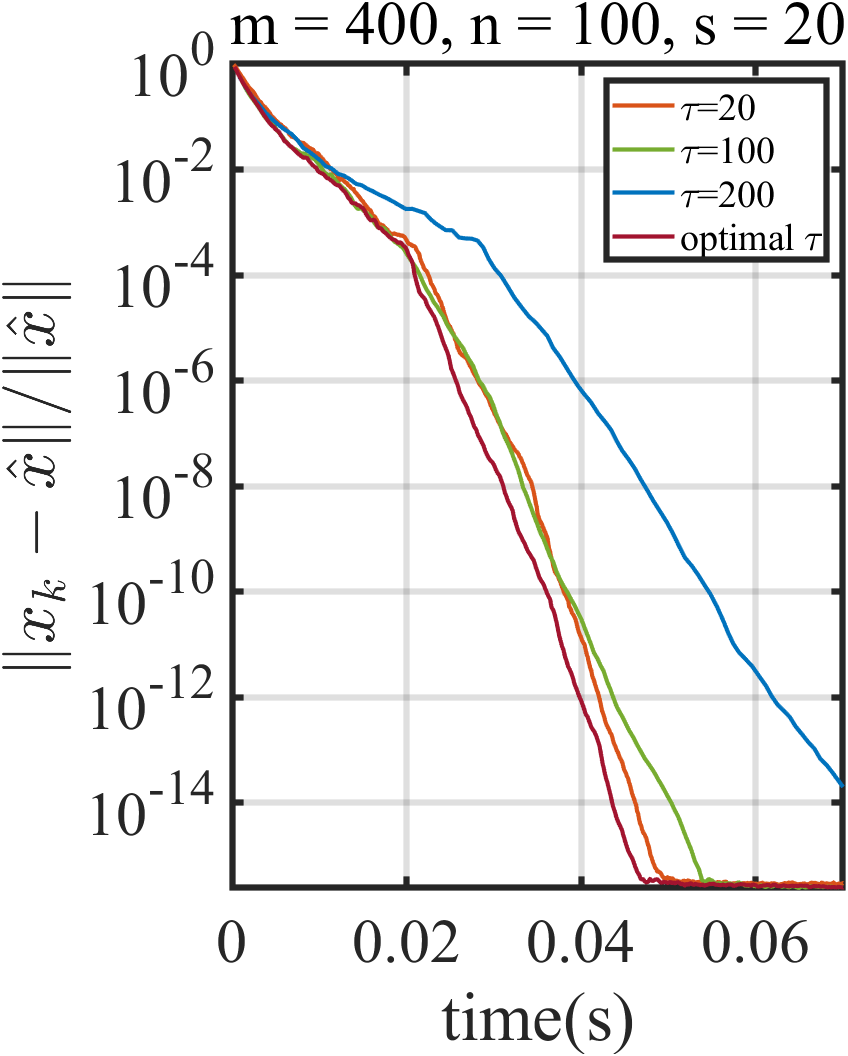}
		\includegraphics[width=0.23\linewidth,height=0.22\textwidth]{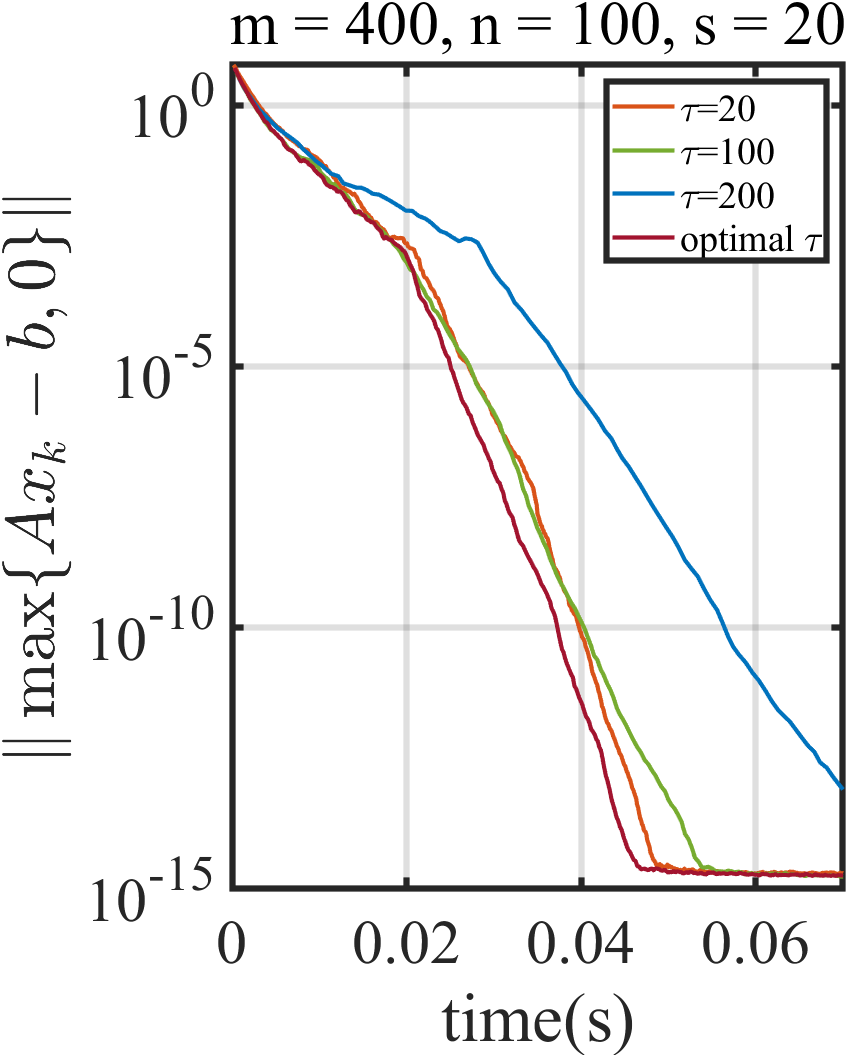}}
	\caption{The performance of SBBP with different stepsizes under $\gamma_b=100$ and $\tau\in\{20,100,200,\tau^*\}$.}
	\label{fig1}
\end{figure}

\subsubsection{The advantages of Bregman projections}

In this experiment, we set $\tau=20$ and $s=20$ and compare SBP ($\lambda=0$) in \cite{necoara2022stochastic} with SBBP ($\lambda>0$) equipped with three stepsizes. As suggested in Section \ref{sec6.1.1}, we set $c=0.2$ and $\gamma_b=100$.

Table \ref{table2} records the median number of iterations (IT) and computing time (CPU) of SBP and SBBP with three stepsizes over 50 trials. The iteration is terminated once the relative error is less than $\varepsilon=10^{-10}$ or the maximum number of iterations reaches $20000$.
To make the results more intuitive, Fig. \ref{fig6} plots the convergence behavior of SBP and SBBP. We find that SBBP with exact projective stepsize yields the fastest decrease in terms of both relative error and the residual, followed by SBBP with DecmSPS, while SBBP with \textsl{block adaptive extrapolated stepsize} is slower but still ahead of SBP with block stepsize.

\begin{table}[h]
			\caption{IT and CPU of different methods for random matrices when relative error $<10^{-10}$ (median IT and CPU over 50 trials; ``-'' indicates that the stopping criterion is not met within the iteration budget).}
			\label{table2}
			\begin{tabular*}{\textwidth}{@{\extracolsep{\fill}}lccccc@{\extracolsep{\fill}}}
				\toprule
				\multicolumn{2}{c}{ $m\times n$ }  
				&  $500\times 200$ & $1000\times 200$ & $1000\times 500$ & $2000\times 200$ 
				\\	
				\midrule
				\multirow{2}{*}{ SBP with block stepsize \cite{necoara2022stochastic}} & IT  &  -  & 3231 & - & 1601    
				\\
				\cmidrule{2-6}
				& CPU    & -  &  0.2670  & - & 0.1665  
				\\
				\midrule
				\multirow{2}{*}{ SBBP with block stepsize } & IT  & 2168 &  1151   & 3503 & 1167      
				\\
				\cmidrule{2-6}
				& CPU & 0.1636  & 0.0939 & 0.5121  & 0.1247      
				\\
				\midrule
				\multirow{2}{*}{SBBP with exact stepsize } & IT   & \textbf{771}  & \textbf{252} & \textbf{670} & \textbf{240}     
				\\
				\cmidrule{2-6}
				& CPU   & \textbf{0.0818}  &  \textbf{0.0300} & \textbf{0.1829} & \textbf{0.0358}      
				\\
				\midrule
				\multirow{2}{*}{SBBP with DecmSPS} & IT   & 1249  & 633 & 1647 & 607      
				\\
				\cmidrule{2-6}
				& CPU & 0.1171  & 0.0644 & 0.2583  & 0.0747      
				\\
				\botrule
			\end{tabular*}
\end{table}

\begin{figure}[H]
	\centering
	\subfigure[$A\in\mathbb{R}^{500\times 200}$]{
		\includegraphics[width=0.23\linewidth,height=0.22\textwidth]{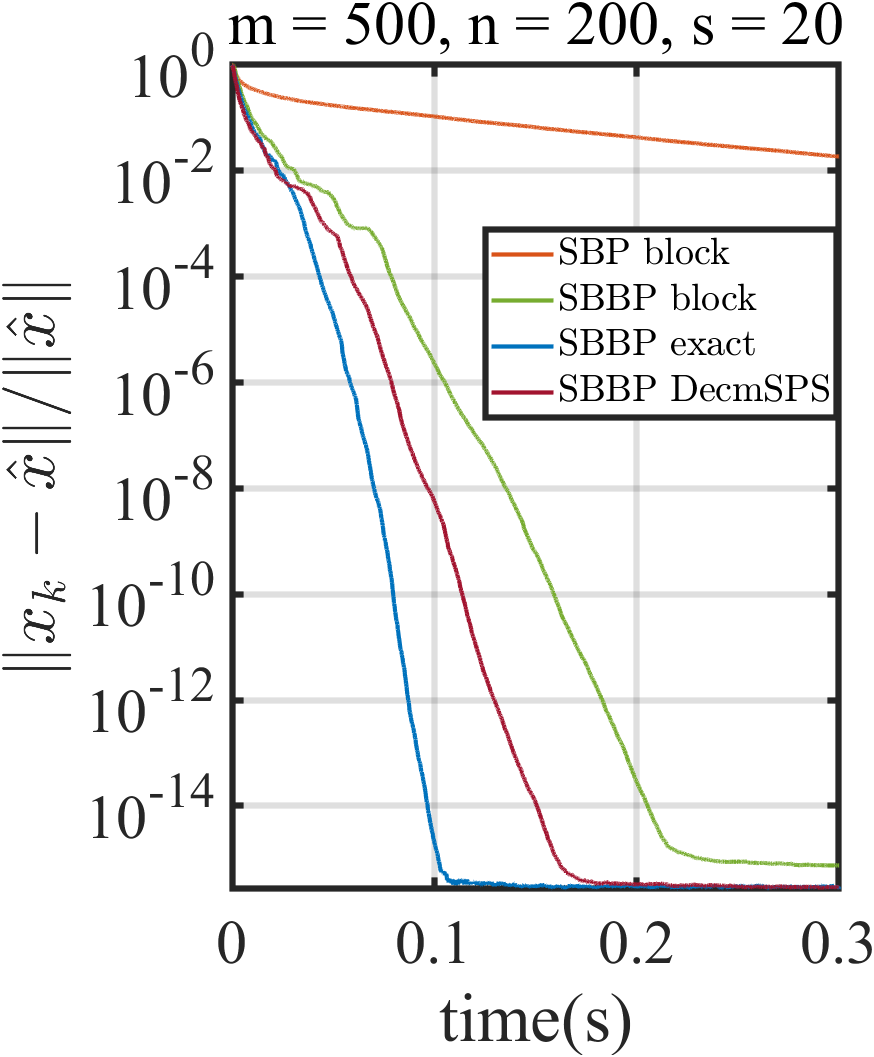}
		\includegraphics[width=0.23\linewidth,height=0.22\textwidth]{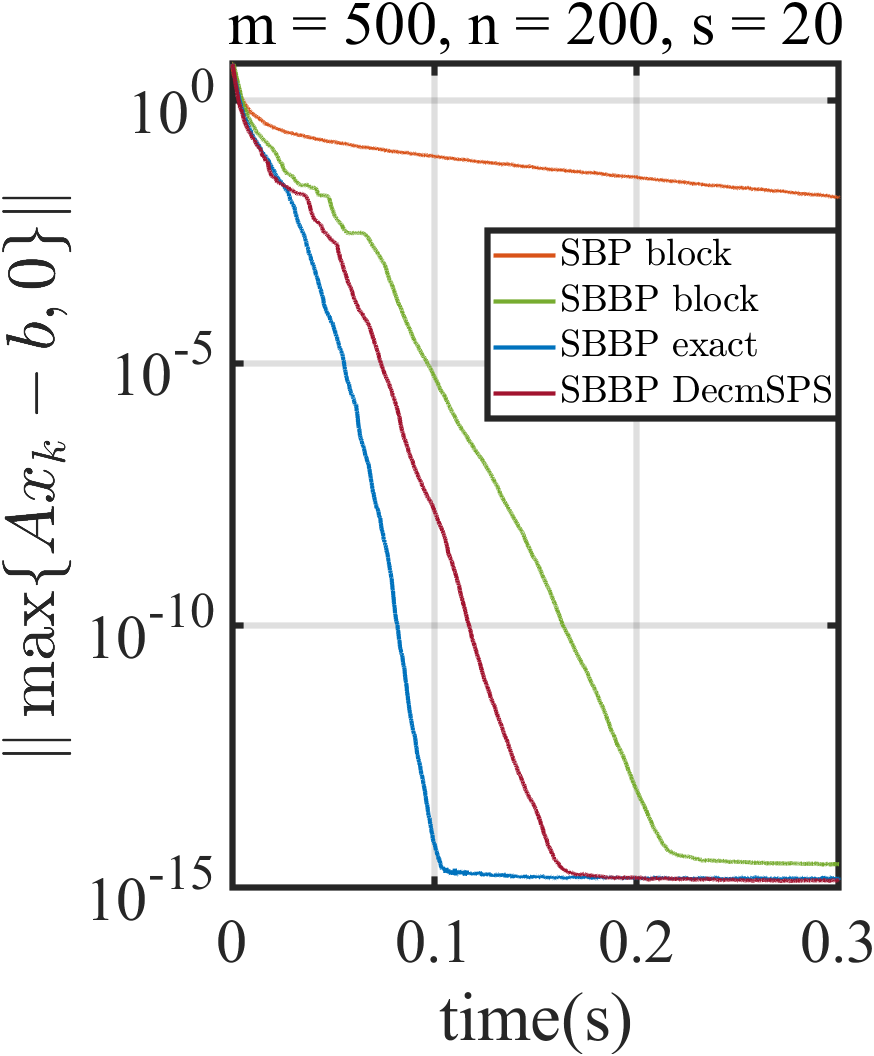}}
	\subfigure[$A\in\mathbb{R}^{1000\times 200}$]{
		\includegraphics[width=0.23\linewidth,height=0.22\textwidth]{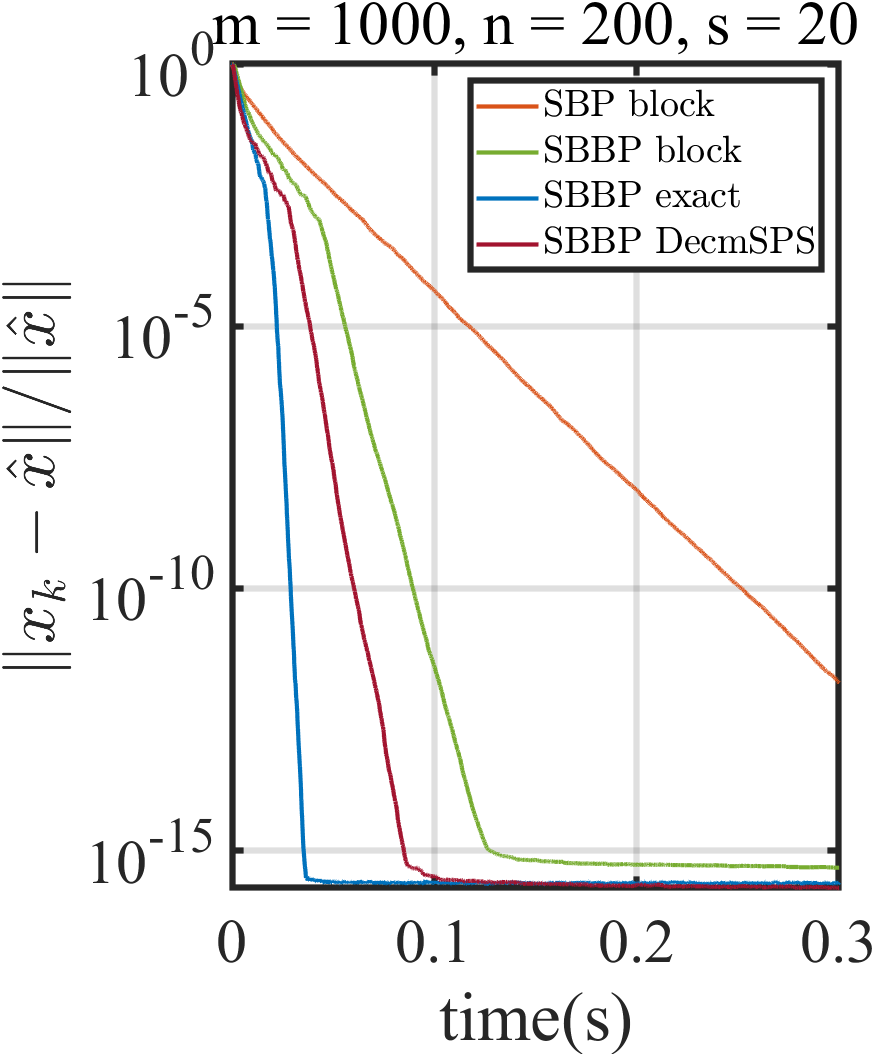}
		\includegraphics[width=0.23\linewidth,height=0.22\textwidth]{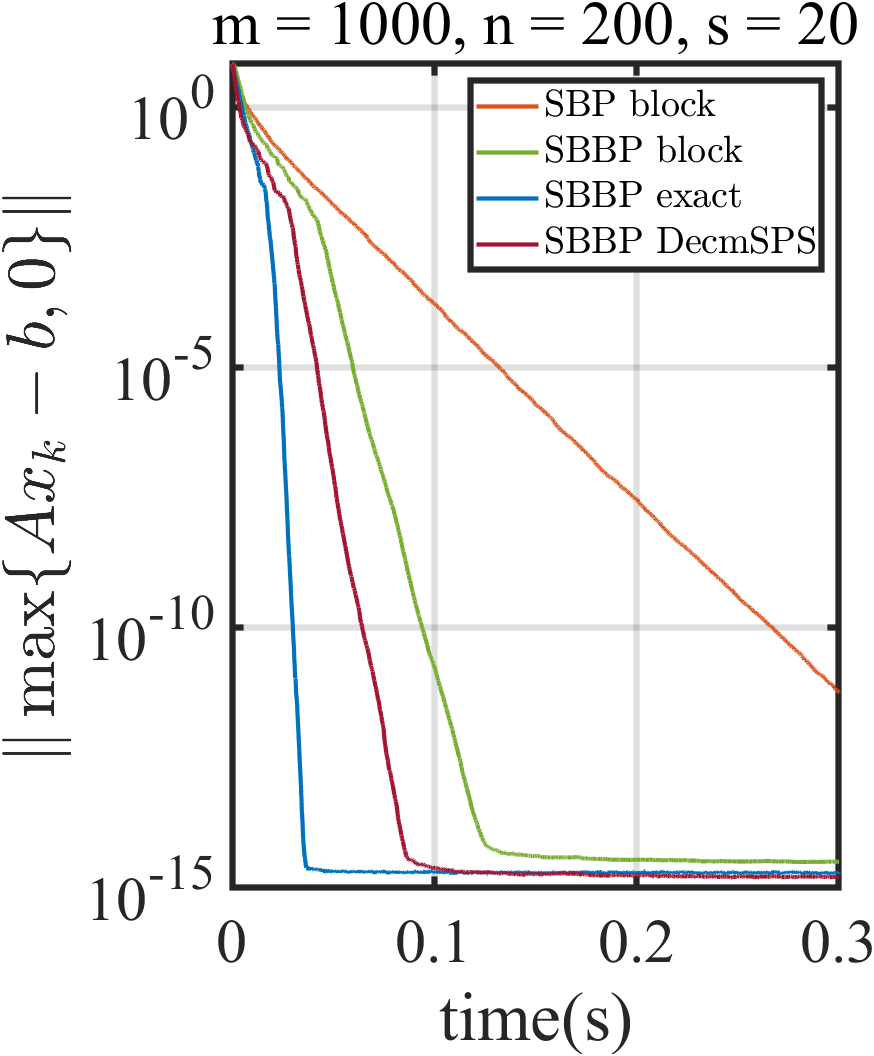}}
	\subfigure[$A\in\mathbb{R}^{1000\times 500}$]{
		\includegraphics[width=0.23\linewidth,height=0.22\textwidth]{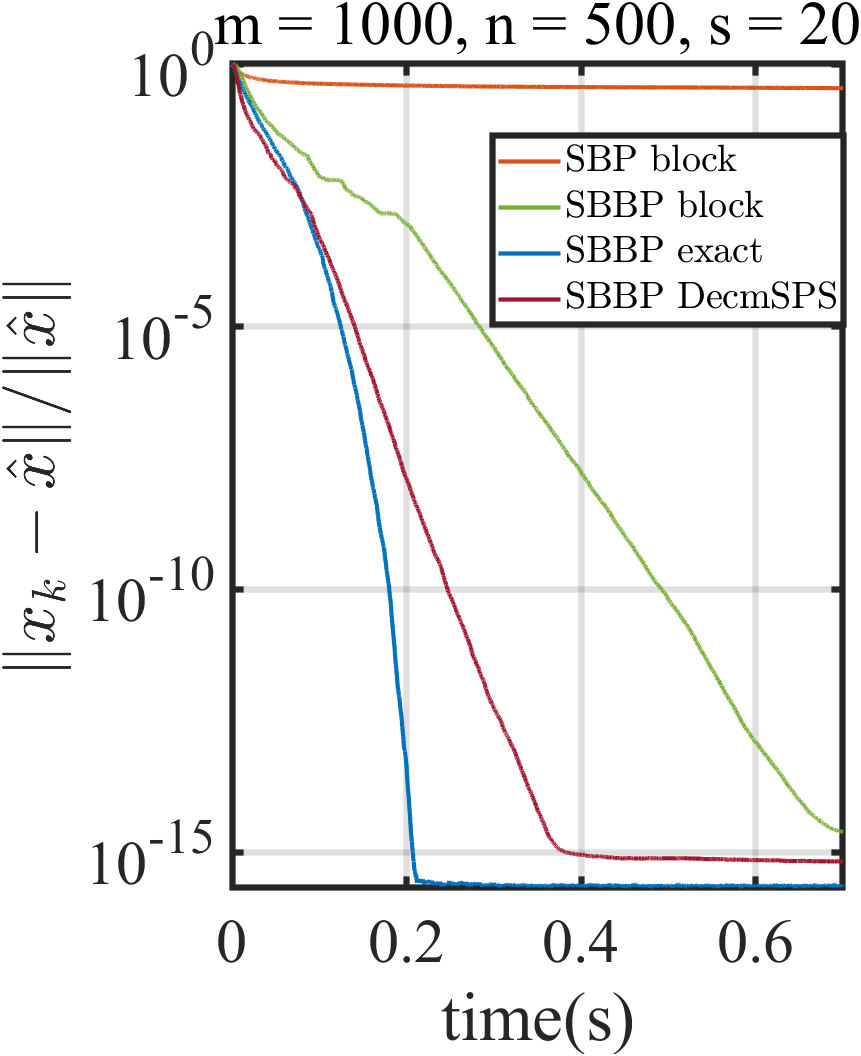}
		\includegraphics[width=0.23\linewidth,height=0.22\textwidth]{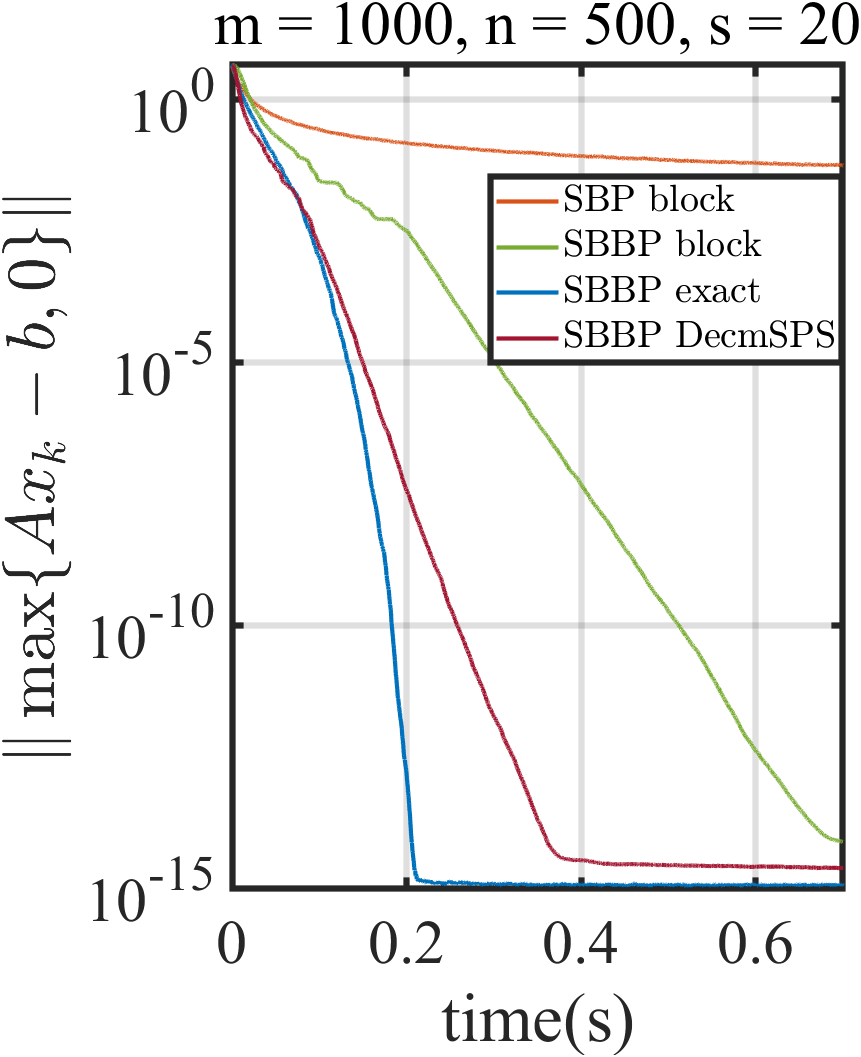}}
	\subfigure[$A\in\mathbb{R}^{2000\times 200}$]{
		\includegraphics[width=0.23\linewidth,height=0.22\textwidth]{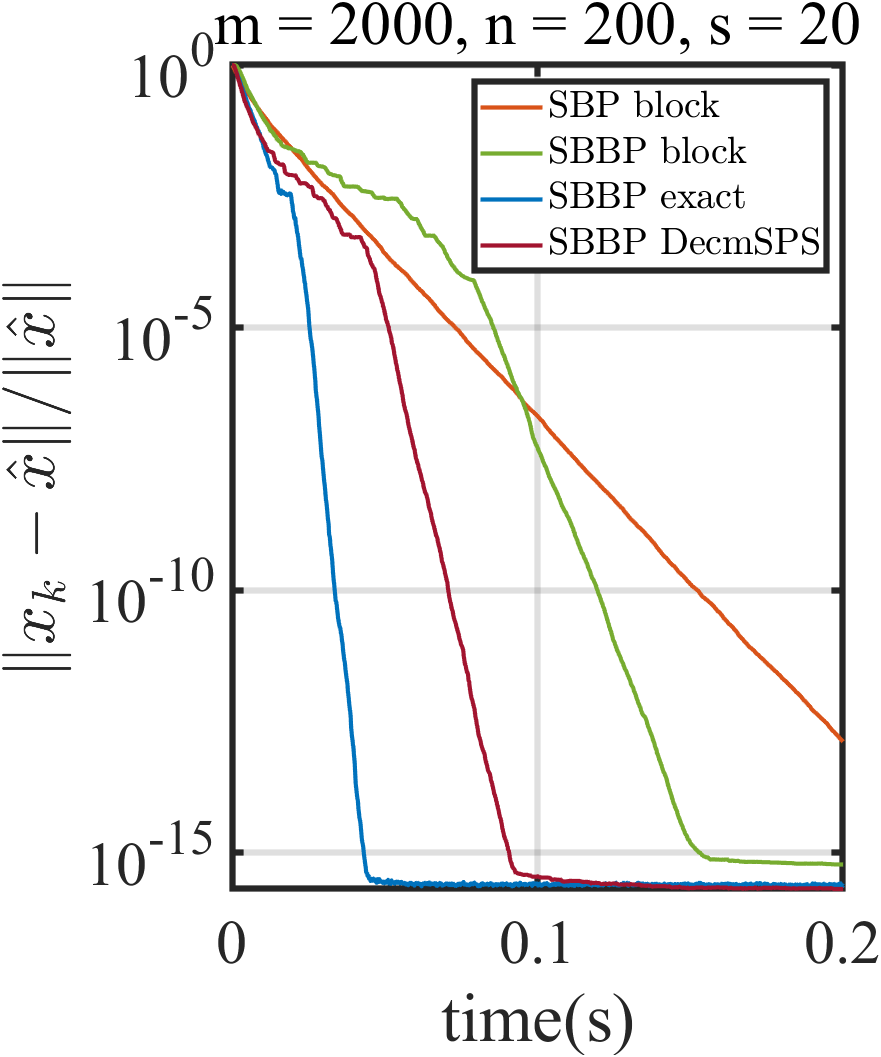}
		\includegraphics[width=0.23\linewidth,height=0.22\textwidth]{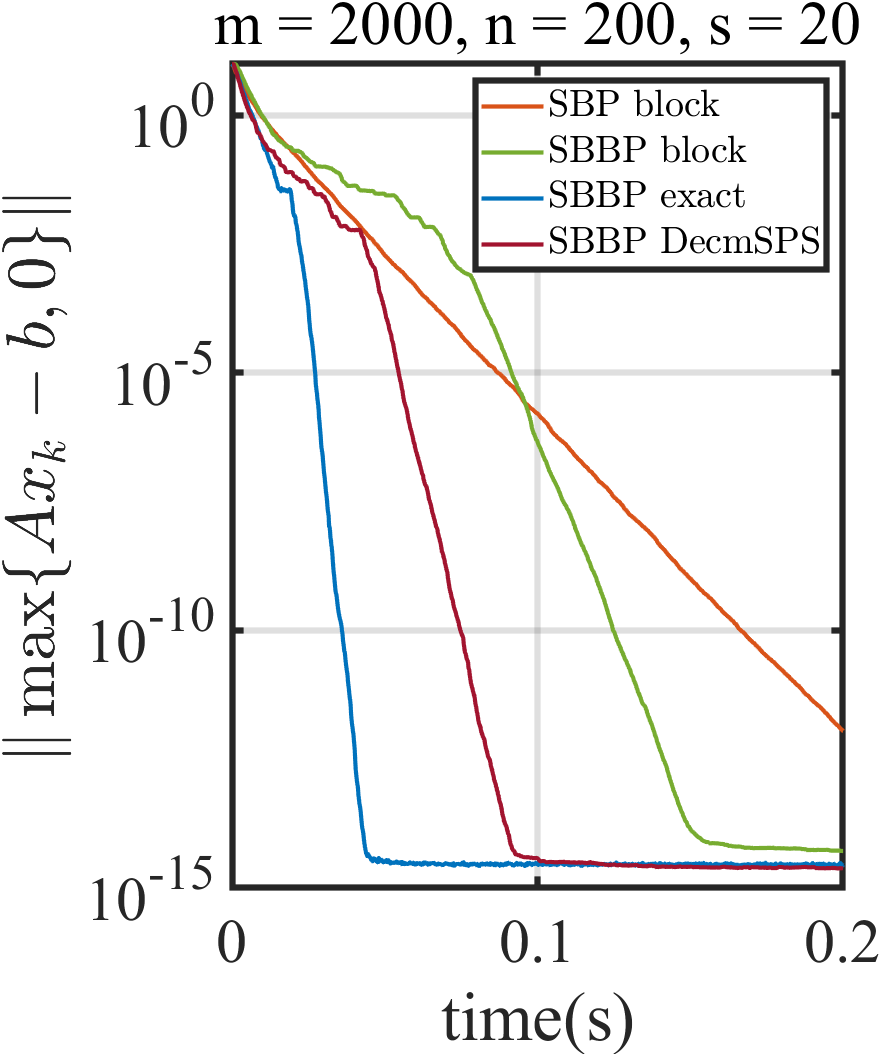}}
	\caption{The convergence of SBP and SBBP with different stepsizes for several matrix sizes}
	\label{fig6}
\end{figure}

\subsection{Split feasibility problems}
\label{sec6.2}

In this subsection, we consider sparse recovery from linear measurements corrupted by Gaussian noise and compare SBBP with the randomized Bregman projection method \cite{schopfer2019linear}.
Recall in Section \ref{sec5.2}, we use deviation functions $f_i(x)=\frac{1}{2}\|A_ix-P_{Q_i}(A_ix)\|_2^2$ and take a constant block size $m_i=\xi$, where $Q_i\subset\mathbb{R}^{\xi}$ are closed convex sets and $A_i\in\mathbb{R}^{\xi\times n}$ are row blocks of $A\in\mathbb{R}^{M\times n}$.
The matrix $A$ and the nonzero entries of an $s$-sparse ground truth $\hat{x}$ are drawn i.i.d. from $\mathcal{N}(0,1)$; the support of $\hat{x}$ is chosen uniformly at random, and $b=A\hat{x}$.
We partition $(A,b)$ into $m=M/\xi$ consecutive row blocks $(A_i,b_i)$.
For a noise level $\sigma>0$, we construct consistent and inconsistent instances as follows.
 



\textbf{Consistent SFPs.} Let $\varepsilon\sim\mathcal{N}(0,I_{M})$ and set $b^{\sigma}=b+\sigma\cdot\operatorname{sign}(\varepsilon)$.
For each block $i\in[m]$, let $b_i^{\sigma}\in\mathbb{R}^{\xi}$ be the corresponding segment of $b^{\sigma}$ and define $Q_i=\{y\in\mathbb{R}^{\xi}\mid \|y-b_i^{\sigma}\|_2\leq r\}$ with $r=\sqrt{\xi}\sigma$.
Then $\hat{x}$ is feasible and satisfies $\|A_i\hat{x}-b_i^{\sigma}\|_2=r$ for all $i\in[m]$.

\textbf{Inconsistent SFPs.} Choose a unit vector $v\in\mathcal{N}(A^T)$ and set $b^{\sigma}=b+\sigma\sqrt{M}\,v$.
Define $Q_i=\{y\in\mathbb{R}^{\xi}\mid \|y-b_i^{\sigma}\|_2\leq r\}$ with $r=\sqrt{\xi}\sigma/2$.
Since $\operatorname{dist}(b^{\sigma},\mathcal{R}(A))=\sigma\sqrt{M}>\sqrt{m}\,r$, the feasibility system is inconsistent.

Here SBBP reduces to (\ref{eq6:17}); we let $\psi(x)=\lambda\|x\|_1+\frac{1}{2}\|x\|_2^2$.
The projection onto $Q_i$ is explicit and
$$
A_ix_k-P_{Q_i}(A_ix_k)= \max\left\{0,1-\frac{r}{\|A_i x_k-b_i^{\sigma}\|_2} \right\}\cdot (A_i x_k-b_i^{\sigma}).
$$
We set $M=800,m=80,n=200,\xi=10,\tau=20$, and $s=20$, and test noise levels $\sigma\in\{0.01,0.1\}$.
We take $c=0.2,\gamma_b=100$ for the consistent case and $c=0.1,\gamma_b=100$ for the inconsistent case.
We report the median over 50 trials of the relative error $\|x_k-\hat{x}\|_2/\|\hat{x}\|_2$ and the relative residual $\|Ax_k-b\|_2/\|b\|_2$.

The recovery results of SBBP and RBP are summarized in Fig. \ref{fig8} for $\sigma=0.01$ and in Fig. \ref{fig9} for $\sigma=0.1$.
As the noise level increases, the recovery performance deteriorates for all methods; the degradation is much more pronounced for RBP in the consistent case.
As illustrated in Fig. \ref{fig8} (a) and Fig. \ref{fig9} (a), for consistent problems, SBBP with the exact projective stepsize performs best, followed by SBBP with DecmSPS. Notably, as the noise level $\sigma$ increases, the convergence rate of RBP with the exact stepsize deteriorates significantly.
This robustness highlights the advantage of the mini-batch strategy in mitigating the impact of noise.
Regarding the inconsistent problems shown in Fig. \ref{fig8} (b) and Fig. \ref{fig9} (b), SBBP with DecmSPS exhibits the superior performance.
Overall, the RBP method is more sensitive to noise than SBBP in both consistent and inconsistent cases, whereas SBBP with all three stepsizes remains robust.
In particular, SBBP with DecmSPS demonstrates the most effective balance between convergence speed and noise robustness in challenging inconsistent feasibility problems.


\begin{figure}[H]
	\centering
	\subfigure[consistent]{
		\includegraphics[width=0.23\linewidth,height=0.22\textwidth]{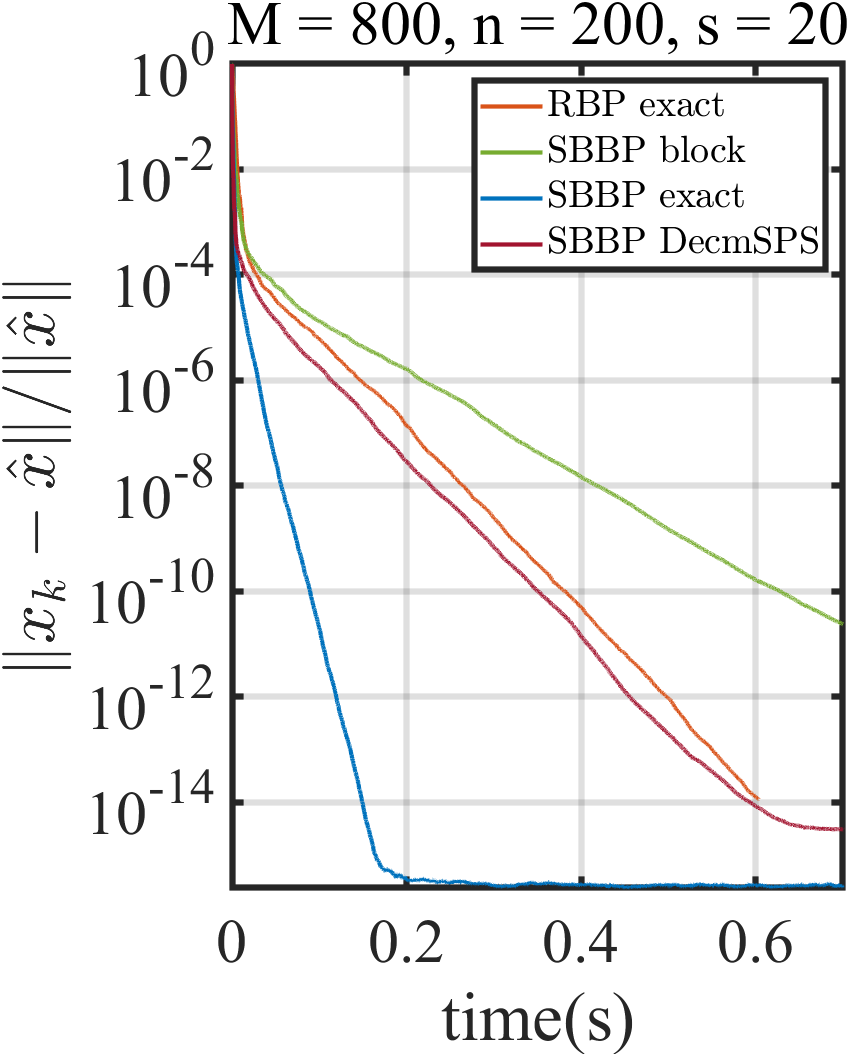}
        \includegraphics[width=0.23\linewidth,height=0.22\textwidth]{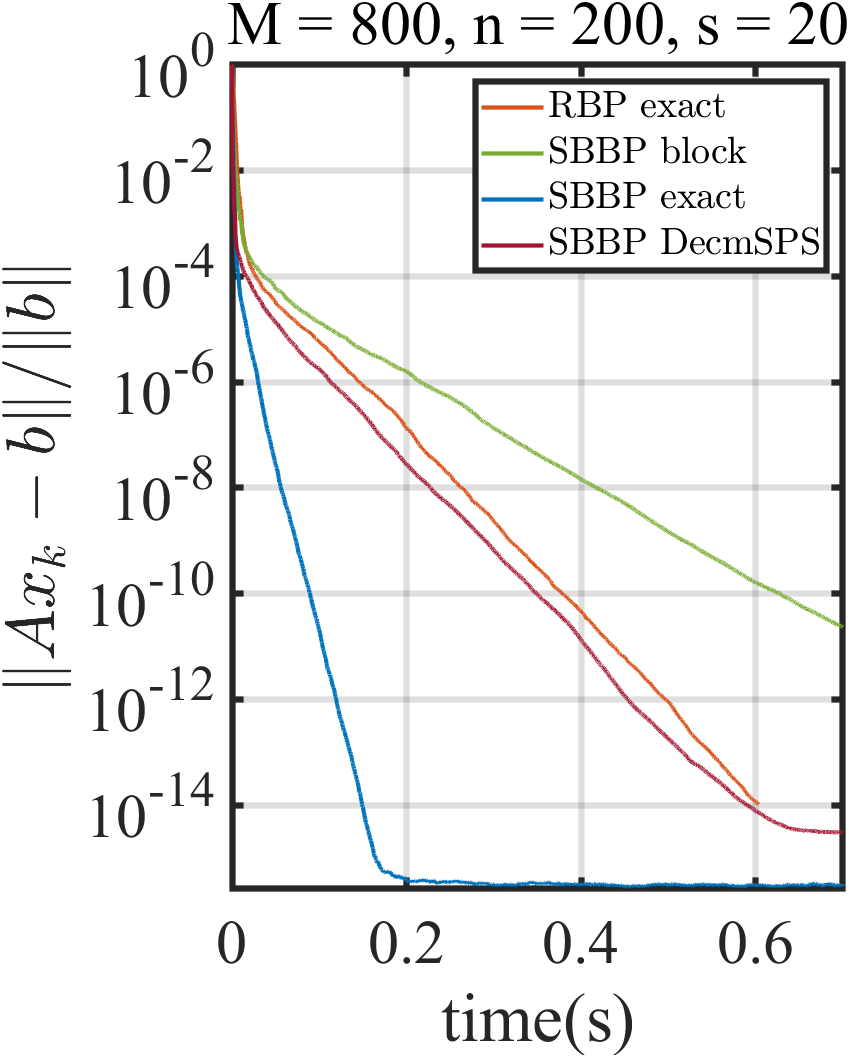}}
	\subfigure[inconsistent]{
		\includegraphics[width=0.23\linewidth,height=0.22\textwidth]{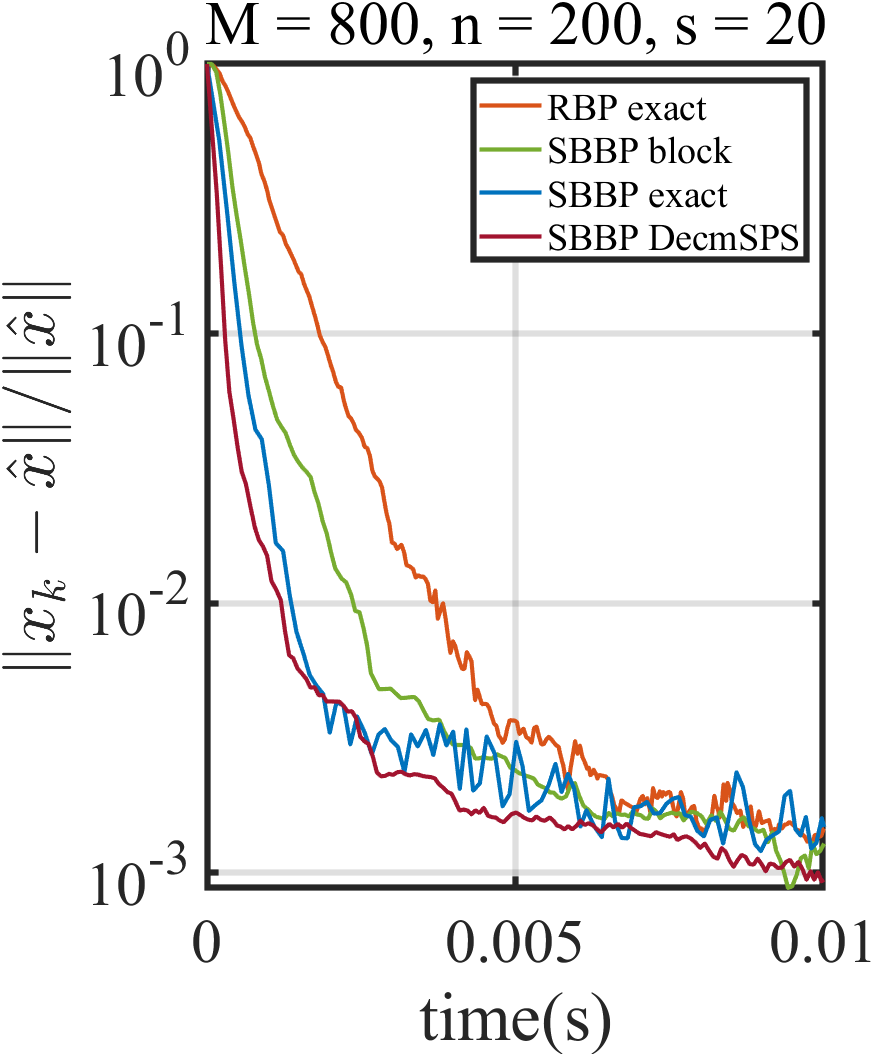}
		\includegraphics[width=0.23\linewidth,height=0.22\textwidth]{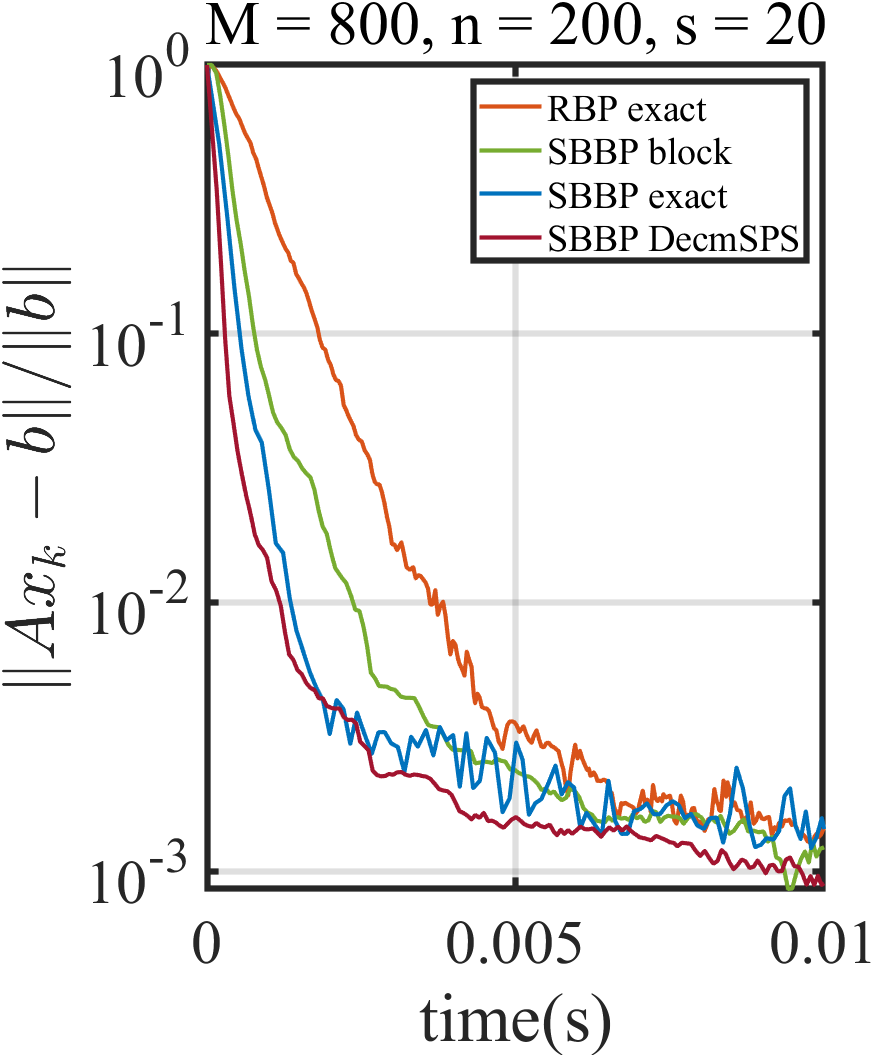}}
	\caption{The performance of RBP and SBBP with different stepsizes with $\sigma=0.01$}
	\label{fig8}
\end{figure}

\begin{figure}[H]
	\centering
	\subfigure[consistent]{
		\includegraphics[width=0.23\linewidth,height=0.22\textwidth]{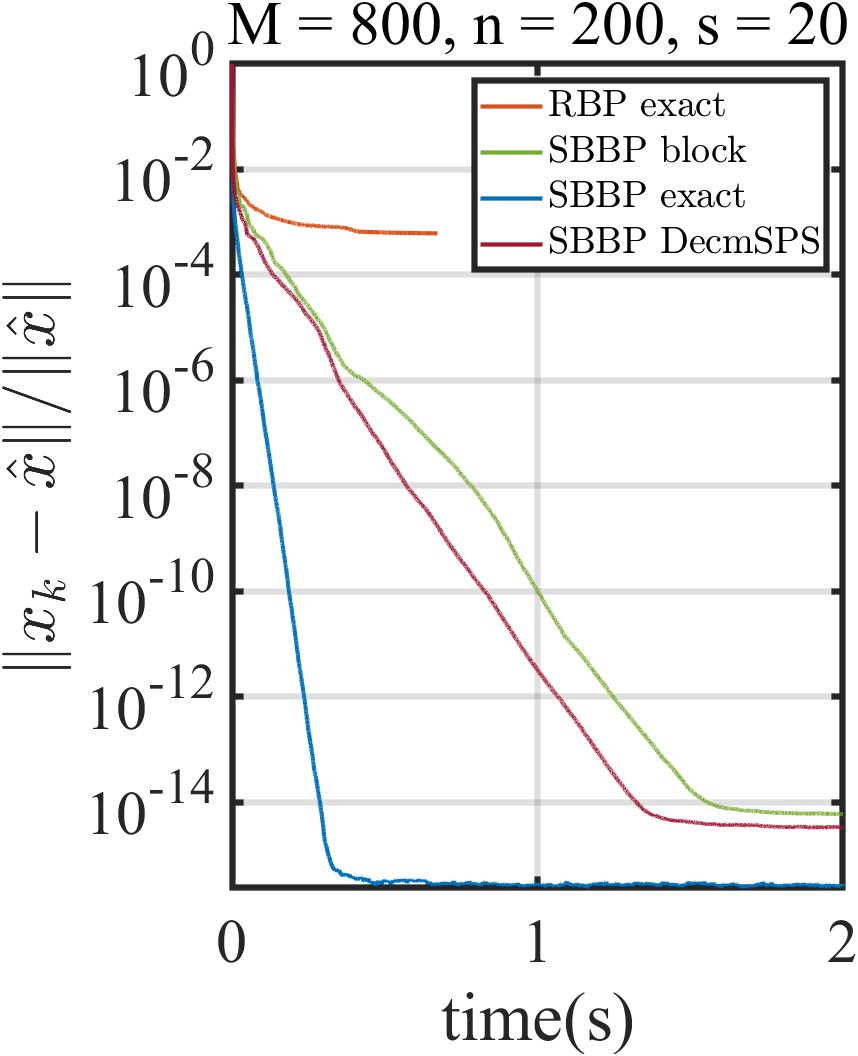}
		\includegraphics[width=0.23\linewidth,height=0.22\textwidth]{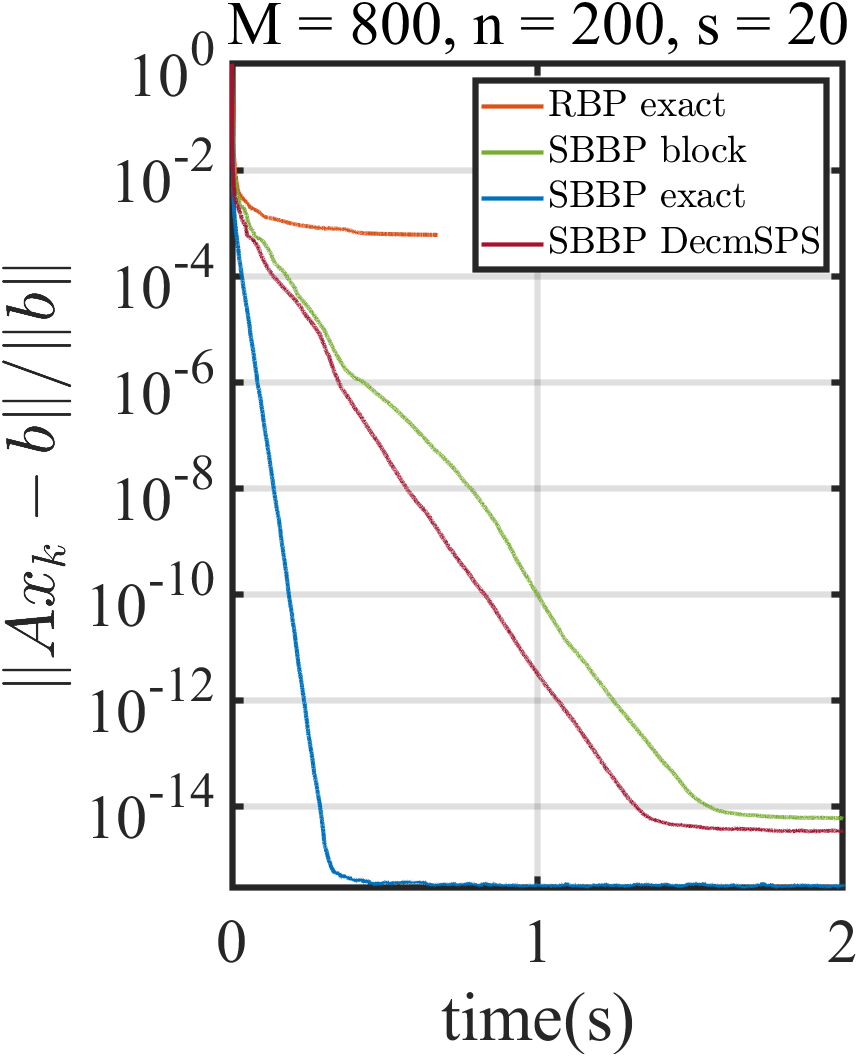}}
	\subfigure[inconsistent]{
		\includegraphics[width=0.23\linewidth,height=0.22\textwidth]{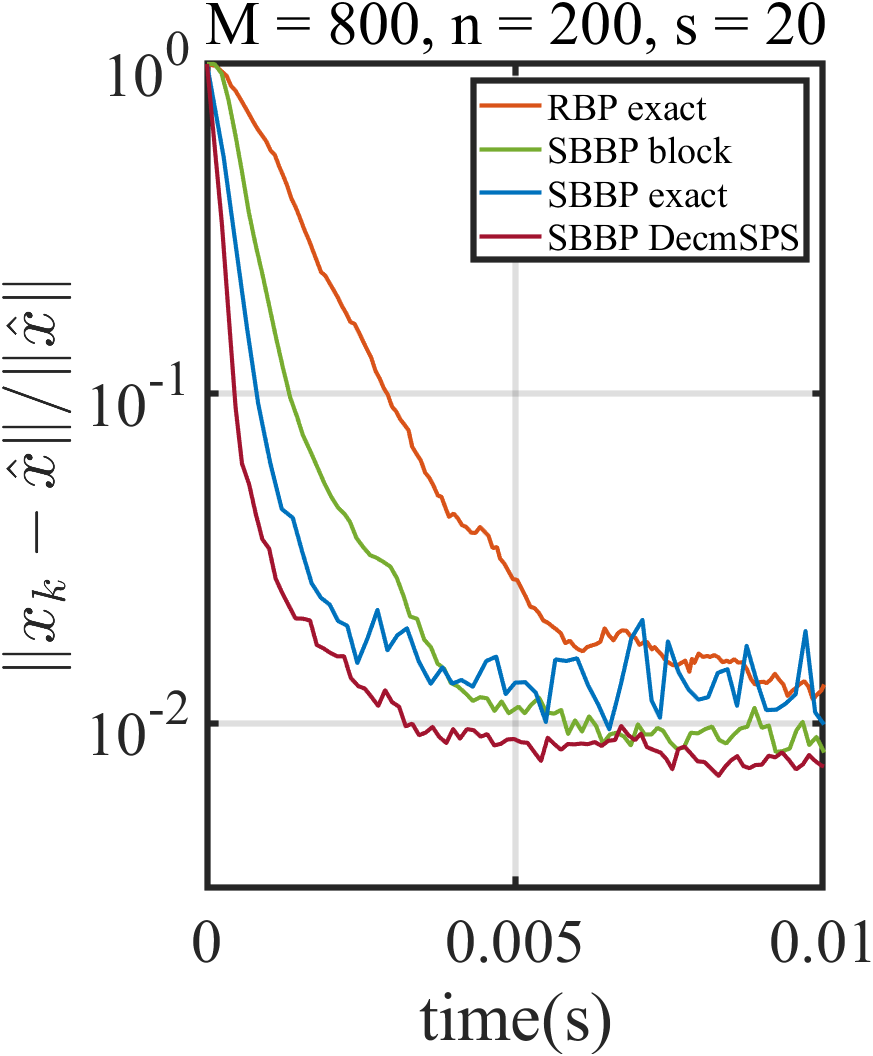}
		\includegraphics[width=0.23\linewidth,height=0.22\textwidth]{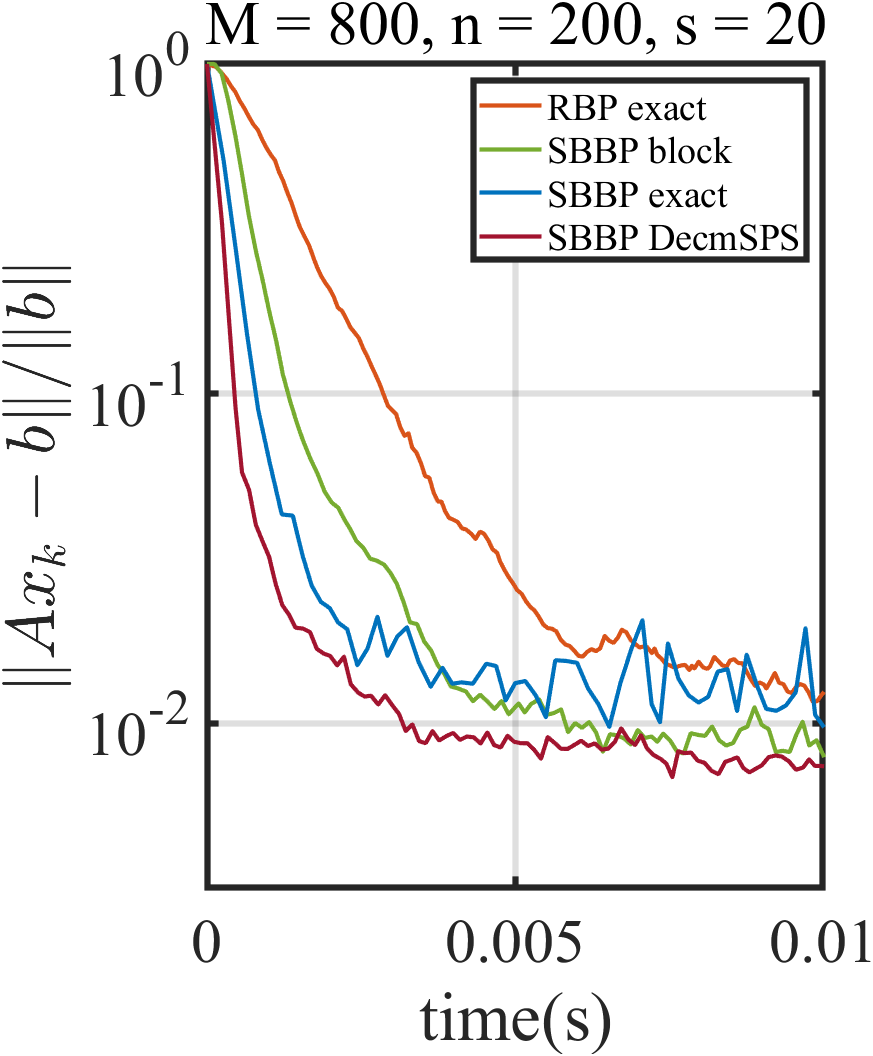}}
	\caption{The performance of RBP and SBBP with different stepsizes with $\sigma=0.1$}
	\label{fig9}
\end{figure}

\section{Conclusion}
\label{sec7}
In this paper, we developed a stochastic bilevel reformulation for convex feasibility problems, which provides a unified treatment of both consistent and inconsistent convex feasibility problems. Within this reformulation, we proposed the stochastic block Bregman projection method with Polyak-like stepsize and projective stepsizes, and established ergodic sublinear and linear convergence in expectation to the inner minimizer set $\bar{X}$ under assumptions on the inner function. The proposed method also recovers and extends several existing projection-based methods, and induces effective algorithms for linear and split feasibility problems. Numerical experiments further demonstrated the effectiveness of the proposed method.


In the future, we will focus on further accelerating the proposed framework by incorporating Nesterov's momentum \cite{nesterov1983method,nesterov2013gradient}, exploring advanced Polyak-like stepsizes such as the AdaSPS \cite{jiang2023adaptive}, and investigating efficient sampling methods. Another direction is to modify the SBBP method so as to guarantee convergence to the solution of the proposed bilevel optimization problem. Furthermore, we plan to extend the SBBP framework to the more general common fixed point problem, which involves finding a point in the intersection of the fixed point sets of a finite family of operators \cite{censor2022generalized}.

\backmatter


\bmhead{Acknowledgements}
This work was funded by the National Natural Science Foundation of China (No.12471300, No.61977065), and the China Scholarship Council program (Project ID: 202406110004).
The authors declare no competing interests. All authors contributed to the study's conception and design. The first draft of the
manuscript was written by Lu Zhang, and all authors commented on previous versions of the manuscript.
All authors read and approve the final manuscript and are all aware of the current submission.



\begin{appendices}

\section{The Proof in Section \ref{sec3}}\label{secA1}

\subsection*{The proof of Theorem \ref{th4.1}}
\begin{proof}
	For the inexact stepsize $\tilde{t}_k$, the update (\ref{eq4.5}) takes the form
	\begin{equation*}
	\label{eq4.9}
	\left\{\begin{aligned}
	\tilde{x}_{k+1}^*&=x_k^*-\tilde{t}_k\cdot \alpha_k,\\
	\tilde{x}_{k+1}&=\nabla\psi^*(\tilde{x}_{k+1}^*).
	\end{aligned}\right.
	\end{equation*} 
	Thus we have $\tilde{x}_{k+1}^*\in\partial\psi(\tilde{x}_{k+1})$. By the uniqueness of the Bregman projection, we need to show that $\tilde{x}_{k+1}$ is the Bregman projection of $x_k$ onto the halfspace $H_k$. It follows from Lemma \ref{lemma2.1} that we only need to prove 
	\begin{equation}
	\label{eq4.10}
	\langle \tilde{x}_{k+1}^*-x_k^{*},x-\tilde{x}_{k+1}\rangle \geq 0,\forall x\in H_k.
	\end{equation}
	Let $L_{\max}=\max_{i\in [m]}L_i$. For any $x\in H_k$, we have 
	\begin{equation*}
	\begin{aligned}
	&~~~~\langle \tilde{x}_{k+1}^*-x_k^{*},x-\tilde{x}_{k+1}\rangle\\
	&=\langle -\tilde{t}_k\cdot \alpha_k,x-\nabla\psi^*(x_k^*-\tilde{t}_k\cdot \alpha_k)\rangle\\
	&=\tilde{t}_k(\langle \alpha_k,\nabla\psi^*(x_k^*-\tilde{t}_k\cdot \alpha_k)\rangle-\langle \alpha_k,x\rangle)\\
	&\geq \tilde{t}_k\left(\langle \alpha_k,\nabla\psi^*(x_k^*-\tilde{t}_k\cdot \alpha_k)\rangle-\langle \alpha_k,x_k\rangle+\sum_{i\in J_k} \frac{w_{i,k}}{L_i}\|\nabla f_i(x_k)\|^2\right)\\
	&=\tilde{t}_k\left(\langle \alpha_k,\nabla\psi^*(x_k^*-\tilde{t}_k\cdot \alpha_k)-\nabla\psi^*(x_k^*)\rangle+\sum_{i\in J_k} \frac{w_{i,k}}{L_i}\|\nabla f_i(x_k)\|^2\right)\\
	&\geq \tilde{t}_k\left(
	-\|\alpha_k\|\cdot\|\nabla\psi^*(x_k^*-\tilde{t}_k\cdot \alpha_k)-\nabla\psi^*(x_k^*)\|+\sum_{i\in J_k} \frac{w_{i,k}}{L_i}\|\nabla f_i(x_k)\|^2
	\right)\\
	&\geq 
	\tilde{t}_k\left(
	-\frac{\tilde{t}_k}{\mu}\|\sum_{i\in J_k} w_{i,k}\nabla f_i(x_k)\|^2+
	\frac{1}{L_{\max}}\sum_{i\in J_k} w_{i,k}\|\nabla f_i(x_k)\|^2
	\right),
	\end{aligned}
	\end{equation*}
	where the first inequality uses $\langle \alpha_k,x\rangle \leq \beta_k,\forall x\in H_k$, the second follows from the Cauchy-Schwarz inequality, and the third from $\tilde{t}_k>0$ and the Lipschitz continuity of $\nabla \psi^*$ with $1/\mu$.
	Hence, we conclude that the condition (\ref{eq4.10}) holds for all stepsize satisfying
	$$
	0< \tilde{t}_k\leq \frac{\mu}{L_{\max}}\frac{\sum_{i\in J_k} w_{i,k}\|\nabla f_i(x_k)\|^2}{\|\sum_{i\in J_k} w_{i,k}\nabla f_i(x_k)\|^2}=\frac{\mu}{L_{\max}L_{\operatorname{adapt}}^{k,\tau}}.
	$$
	This completes the proof.
\end{proof}

\subsection*{The proof of Lemma \ref{lemma5.1}}

\begin{proof}
	Firstly, we consider the iterate sequence $\{(x_k,x_k^*)\}$ generated by Algorithm \ref{al1} with the exact stepsize $t_k=\hat{t}_k$, which is the exact solution to the minimization problem (\ref{eq4.6}). 
	It follows from the definition of Bregman distance and the update in Algorithm \ref{al1} that for $x_{k+1}^*\in\partial \psi(x_{k+1})$ and $\forall~x\in\mathbb{R}^n$ we have 
	\begin{equation*}
	\begin{aligned}
	D_{\psi}^{x_{k+1}^*}(x_{k+1},x)
	&=\psi^*(x_{k+1}^*)+\psi(x)-\langle x_{k+1}^*,x\rangle\\
	&=\psi^*(x_k^*-t_k\sum_{i\in J_k} w_{i,k}\nabla f_i(x_k))+\psi(x)-\langle x_k^*-t_k\sum_{i\in J_k} w_{i,k}\nabla f_i(x_k),x\rangle\\
	&=\psi^*(x_k^*-t_k\sum_{i\in J_k} w_{i,k}\nabla f_i(x_k))+t_k\langle \sum_{i\in J_k} w_{i,k}\nabla f_i(x_k),x\rangle
	+\psi(x)-\langle x_k^*,x\rangle.
	\end{aligned}
	\end{equation*}
	For any $x\in H_k$, we have $\langle \sum_{i\in J_k} w_{i,k}\nabla f_i(x_k),x\rangle\leq \beta_k$; combining with the fact that $t_k$ is the exact solution to the minimization problem (\ref{eq4.6}), we can further derive that for any $x\in H_k$ and $t>0$, it holds that
	\begin{equation}
	\begin{aligned}
	\label{eq5.1}
	&~~~~D_{\psi}^{x_{k+1}^*}(x_{k+1},x)\\
	&\leq \psi^*(x_k^*-t_k\sum_{i\in J_k} w_{i,k}\nabla f_i(x_k))+t_k\beta_k
	+\psi(x)-\langle x_k^*,x\rangle\\
	&\leq \psi^*(x_k^*-t\sum_{i\in J_k} w_{i,k}\nabla f_i(x_k))+t\beta_k
	+\psi(x)-\langle x_k^*,x\rangle\\
	&\leq \psi^*(x_k^*)+\langle -t\sum_{i\in J_k} w_{i,k}\nabla f_i(x_k),x_k\rangle+\frac{t^2}{2\mu}\|\sum_{i\in J_k} w_{i,k}\nabla f_i(x_k)\|^2+t\beta_k+\psi(x)-\langle x_k^*,x\rangle\\
	&=
	D_{\psi}^{x_{k}^*}(x_{k},x)
	-t \sum_{i\in J_k} \frac{w_{i,k}}{L_i}\|\nabla f_i(x_k)\|^2 +\frac{t^2}{2\mu}\|\sum_{i\in J_k} w_{i,k}\nabla f_i(x_k)\|^2.
	\end{aligned}
	\end{equation}
	where the third inequality holds due to the fact that $\psi^*$ is $1/\mu$-gradient-Lipschitz-continuous.
	It follows from
	$\|\sum_{i\in J_k} w_{i,k}\nabla f_i(x_k)\|^2=L_{\operatorname{adapt}}^{k,\tau}\sum_{i\in J_k} w_{i,k}\|\nabla f_i(x_k)\|^2$ and $L_i\leq L_{\max},i\in [m]$ that (\ref{eq5.1}) can be rewritten into
	\begin{equation}
	\begin{aligned}
	\label{eq5.2}
	D_{\psi}^{x_{k+1}^*}(x_{k+1},x)
	&\leq 
	D_{\psi}^{x_{k}^*}(x_{k},x)
	-\left(\frac{t}{L_{\max}}-\frac{t^2}{2\mu}L_{\operatorname{adapt}}^{k,\tau}\right) \sum_{i\in J_k} w_{i,k}\|\nabla f_i(x_k)\|^2, 
	\end{aligned}
	\end{equation}
	we observe that when $t\in (0,\frac{2\mu}{L_{\max}L_{\operatorname{adapt}}^{k,\tau}})$, the term $\left(\frac{t}{L_{\max}}-\frac{t^2}{2\mu}L_{\operatorname{adapt}}^{k,\tau}\right)$ is nonnegative so that (\ref{eq5.2}) is always a descent property.

	Secondly, for any $x\in H_k$ we have
	$$\begin{aligned}
	D_{\psi}^{x_{k+1}^*}(x_{k+1},x)
	&\leq 
	D_{\psi}^{x_{k}^*}(x_{k},x)
	+\inf_{t>0}\left(-\frac{t}{L_{\max}}+\frac{t^2}{2\mu}L_{\operatorname{adapt}}^{k,\tau}\right) \sum_{i\in J_k} w_{i,k}\|\nabla f_i(x_k)\|^2\\
	&=
	D_{\psi}^{x_{k}^*}(x_{k},x)
	-\frac{\mu}{2L_{\max}^2L_{\operatorname{adapt}}^{k,\tau}} \sum_{i\in J_k} w_{i,k}\|\nabla f_i(x_k)\|^2.
	\end{aligned}$$
	The descent property can also be obtained by letting $t=\frac{\mu}{L_{\max}L_{\operatorname{adapt}}^{k,\tau}}$. The proof is completed.
\end{proof}

\section{The Proof in Section \ref{sec4}}\label{secA2}

\subsection*{The proof of Theorem \ref{th4:1}}
\begin{proof}	
	It follows from the definition of $t_k$ in (\ref{eq4:1}) that we have
	\begin{equation}
	\label{eq4:2}
	t_k^2\|\sum_{i\in J_k} w_{i,k}\nabla f_i(x_k)\|^2\leq \mu t_k\lambda_k\sum_{i\in J_k} w_{i,k} (f_i(x_k)-l_{J_k}^{*}).
	\end{equation}	
	Thus, for any $x\in\mathbb{R}^n$, (\ref{eq4.2}) can be reformulated as
	\begin{equation}
	\label{eq4:3}
	\begin{aligned}
	&~~~~D_{\psi}^{x_{k+1}^*}(x_{k+1},x)\\
	&\leq
	D_{\psi}^{x_{k}^*}(x_{k},x)
	-t_k\sum_{i\in J_k} w_{i,k} (f_i(x_k)-f_i(x))
	+\frac{t_k\lambda_k}{2}\sum_{i\in J_k} w_{i,k} (f_i(x_k)-l_{J_k}^{*})\\
	&=D_{\psi}^{x_{k}^*}(x_{k},x)
	-t_k\left(1-\frac{\lambda_k}{2}\right)\sum_{i\in J_k} w_{i,k} (f_i(x_k)-f_i(x))
	+\frac{t_k\lambda_k}{2} \sum_{i\in J_k} w_{i,k} (f_i(x)-l_{J_k}^{*}).
	\end{aligned}
	\end{equation}
	By a simple reformulation with $0<\lambda_k\leq 1$, we can deduce that
	\begin{equation}
	\label{eq4:22}
\frac{t_k}{2}\sum_{i\in J_k} w_{i,k} (f_i(x_k)-f_i(x))
	\leq 
	D_{\psi}^{x_{k}^*}(x_{k},x)-D_{\psi}^{x_{k+1}^*}(x_{k+1},x)+\frac{t_k\lambda_k}{2} \sum_{i\in J_k} w_{i,k} (f_i(x)-l_{J_k}^{*}).
	\end{equation}
	Dividing both sides of (\ref{eq4:22}) by $t_k>0$ implies
	$$
	\frac{1}{2}\sum_{i\in J_k} w_{i,k} (f_i(x_k)-f_i(x))
	\leq \frac{D_{\psi}^{x_{k}^*}(x_{k},x)}{t_k}-\frac{D_{\psi}^{x_{k+1}^*}(x_{k+1},x)}{t_k}+\frac{\lambda_k}{2} \sum_{i\in J_k} w_{i,k} (f_i(x)-l_{J_k}^{*}).
	$$	
	Summing from $k=0$ to $K-1$ obtains
	\begin{equation}
	\begin{aligned}
	&~~~~\frac{1}{2}\sum_{k=0}^{K-1}\sum_{i\in J_k} w_{i,k} (f_i(x_k)-f_i(x))\\
	&\leq \sum_{k=0}^{K-1}\frac{D_{\psi}^{x_{k}^*}(x_{k},x)}{t_k}-\sum_{k=0}^{K-1}\frac{D_{\psi}^{x_{k+1}^*}(x_{k+1},x)}{t_k}+\sum_{k=0}^{K-1}\frac{\lambda_k}{2}\sum_{i\in J_k} w_{i,k} (f_i(x)-l_{J_k}^{*})\\
	&=\frac{D_{\psi}^{x_{0}^*}(x_{0},x)}{t_0}+\sum_{k=1}^{K-1}\frac{D_{\psi}^{x_{k}^*}(x_{k},x)}{t_k}
	-\sum_{k=0}^{K-2}\frac{D_{\psi}^{x_{k+1}^*}(x_{k+1},x)}{t_k}
	-\frac{D_{\psi}^{x_{K}^*}(x_{K},x)}{t_{K-1}}\\
	&~~~~~~~~~~~~~~~~~~~~
	+\sum_{k=0}^{K-1}\frac{\lambda_k}{2}\sum_{i\in J_k} w_{i,k} (f_i(x)-l_{J_k}^{*})\\
	&\leq
	\frac{D_{\psi}^{x_{0}^*}(x_{0},x)}{t_0}
	+\sum_{k=0}^{K-2}\left(\frac{1}{t_{k+1}}-\frac{1}{t_{k}}\right)D_{\psi}^{x_{k+1}^*}(x_{k+1},x)
	+\sum_{k=0}^{K-1}\frac{\lambda_k}{2}\sum_{i\in J_k} w_{i,k} (f_i(x)-l_{J_k}^{*}).
	\end{aligned}
	\end{equation}
	For a fixed $\hat{x}\in\bar{X}$, define $D=\max_{k\in [K-1]} D_{\psi}^{x_{k}^*}(x_{k},\hat{x})$, then we have
	\begin{equation}
	\label{eq4:23}
	\begin{aligned}
	&~~~~\frac{1}{2}\sum_{k=0}^{K-1}\sum_{i\in J_k} w_{i,k} (f_i(x_k)-f_i(\hat{x}))\\
	&\leq 
	D\left[\frac{1}{t_0}+\sum_{k=0}^{K-2}\left(
	\frac{1}{t_{k+1}}-\frac{1}{t_k}
	\right) \right]
	+\sum_{k=0}^{K-1}\frac{\lambda_k}{2}\sum_{i\in J_k} w_{i,k} (f_i(\hat{x})-l_{J_k}^{*})\\
	&\leq 
	\frac{D}{t_{K-1}}+\sum_{k=0}^{K-1}\frac{\lambda_k}{2}\sum_{i\in J_k} w_{i,k} (f_i(\hat{x})-l_{J_k}^{*}),
\end{aligned}
\end{equation}
where the first inequality holds since $t_{k+1}\leq t_k,~k\in\mathbb{N}$ by Lemma \ref{lemma4.1}. Moreover, it follows from Lemma \ref{lemma4.1} that we have $t_k\geq \lambda_k \tilde{L}$. Thus (\ref{eq4:23}) turns into
	\begin{equation*}
	\begin{aligned}
	~~~~\sum_{k=0}^{K-1}\sum_{i\in J_k} w_{i,k} (f_i(x_k)-f_i(\hat{x}))
	&\leq
\frac{2D}{\lambda_{K-1}\tilde{L}}+\sum_{k=0}^{K-1}\lambda_k\sum_{i\in J_k} w_{i,k} (f_i(\hat{x})-l_{J_k}^{*}).
	\end{aligned}
	\end{equation*}

	Taking expectation on both sides yields
	\begin{equation}
	\begin{aligned}
	\label{eq4:4}
	\sum_{k=0}^{K-1} \mathbb{E}[\sum_{i\in J_k} w_{i,k} (f_i(x_k)-f_i(\hat{x}))]
	\leq
	\frac{2D}{\lambda_{K-1}\tilde{L}}+\sum_{k=0}^{K-1}\lambda_k\mathbb{E}[\sum_{i\in J_k} w_{i,k} (f_i(\hat{x})-l_{J_k}^{*})].
	\end{aligned}
	\end{equation} 
	Under Assumption \ref{assume.sampling}, we have for each $k\geq 0$
	\begin{equation}
	\label{eq4:21}
\mathbb{E}\left[\sum_{i\in J_k} w_{i,k} (f_i(x_k)-f_i(\hat{x}))\right]=\mathbb{E}[F(x_k)-\bar{F}].
	\end{equation}
	Moreover, by the definition of $\sigma_{\tau}^2$ in (\ref{eq4:8}), it holds that $\mathbb{E}\left[\sum_{i\in J_k} w_{i,k} (f_i(\hat{x})-l_{J_k}^{*})\right]=\sigma_\tau^2$.
	Combining (\ref{eq4:4}) and (\ref{eq4:21}), and dividing by $K$, we get
	\begin{equation*}
	\frac{1}{K}\sum_{k=0}^{K-1}\mathbb{E}[F(x_k)-\bar{F}]
	\leq
	\frac{1}{K}\frac{2D}{\lambda_{K-1}\tilde{L}}+\frac{1}{K}\sigma_{\tau}^2\sum_{k=0}^{K-1}\lambda_k.
	\end{equation*}
	Using $\bar{x}_{K}=\frac{1}{K}\sum_{k=0}^{K-1}x_k$ and convexity of $F$, Jensen's inequality yields
	$$
\mathbb{E}\left[F(\bar{x}_{K})-\bar{F}\right]
	\leq
	\frac{1}{K}\left(\frac{2D}{\lambda_{K-1}\tilde{L}}+\sigma_{\tau}^2 \sum_{k=0}^{K-1}\lambda_k\right).
	$$
 	The proof is completed.
\end{proof}

\subsection*{The Proof of Theorem \ref{th4:2}}

\begin{proof}
	Starting from (\ref{eq4:3}), for any $x\in \bar{X}$ we obtain
	\begin{equation}
	\label{eq4.4}
	\begin{aligned}
	&~~~~D_{\psi}^{x_{k+1}^*}(x_{k+1},x)\\
	&\leq
	D_{\psi}^{x_{k}^*}(x_{k},x)
	-t_k\left(1-\frac{\lambda_k}{2}\right)\sum_{i\in J_k} w_{i,k} (f_i(x_k)-f_i(x))
	+\frac{t_k\lambda_k}{2}\sum_{i\in J_k} w_{i,k} (f_i(x)-l_{J_k}^{*})\\
	&=
	D_{\psi}^{x_{k}^*}(x_{k},x)
	-t_k\left(1-\frac{\lambda_k}{2}\right)\sum_{i\in J_k} w_{i,k} (f_i(x_k)-l_{J_k}^{*})
	+t_k\sum_{i\in J_k} w_{i,k} (f_i(x)-l_{J_k}^{*}).
	\end{aligned}
	\end{equation}
	It follows from Lemma \ref{lemma4.1} that $\lambda_k \tilde{L} \leq t_k\leq \lambda_k\frac{\gamma_b}{\lambda_0}$. Since $\sum_{i\in J_k} w_{i,k} (f_i(x_k)-l_{J_k}^{*})\geq 0$ and $\sum_{i\in J_k} w_{i,k} (f_i(x)-l_{J_k}^{*})\geq 0$, for any $x\in\bar{X}$ and $0<\lambda_k<2$ we have
	\begin{equation*}
	\begin{aligned}
	&~~~~D_{\psi}^{x_{k+1}^*}(x_{k+1},x)\\
	&\leq
	D_{\psi}^{x_{k}^*}(x_{k},x)
	-\lambda_k \tilde{L}\left(1-\frac{\lambda_k}{2}\right)\sum_{i\in J_k} w_{i,k} \big(f_i(x_k)-l_{J_k}^{*}\big)
	+\lambda_k\frac{\gamma_b}{\lambda_0}\sum_{i\in J_k} w_{i,k} \big(f_i(x)-l_{J_k}^{*}\big).
	\end{aligned}
	\end{equation*}
	It follows that
	\begin{equation*}
	\begin{aligned}
	&~~~~\inf_{x\in \bar{X}}D_{\psi}^{x_{k+1}^*}(x_{k+1},x)\\
	&\leq
	D_{\psi}^{x_{k}^*}(x_{k},x)
	-\lambda_k \tilde{L}\left(1-\frac{\lambda_k}{2}\right)\sum_{i\in J_k} w_{i,k} \big(f_i(x_k)-l_{J_k}^{*}\big)
	+\lambda_k\frac{\gamma_b}{\lambda_0}\sum_{i\in J_k} w_{i,k} \big(f_i(x)-l_{J_k}^{*}\big).
	\end{aligned}
	\end{equation*}

	Taking expectation conditional to $\mathcal{F}_k=\{J_0,\ldots,J_{k-1}\}$, for any $x\in\bar{X}$ we obtain
	\begin{equation}
	\label{eq6:2}
	\begin{aligned}
	\mathbb{E}\!\left[\inf_{x\in \bar{X}}D_{\psi}^{x_{k+1}^*}(x_{k+1},x)\mid \mathcal{F}_k\right]
	&\leq
	D_{\psi}^{x_{k}^*}(x_{k},x)
	-\lambda_k \tilde{L}\left(1-\frac{\lambda_k}{2}\right)\mathbb{E}\!\left[\sum_{i\in J_k} w_{i,k} \big(f_i(x_k)-l_{J_k}^{*}\big)\mid \mathcal{F}_k\right]\\
	&\quad\quad\quad\quad\quad\quad
	+\lambda_k\frac{\gamma_b}{\lambda_0}\mathbb{E}\!\left[\sum_{i\in J_k} w_{i,k} \big(f_i(x)-l_{J_k}^{*}\big)\mid \mathcal{F}_k\right].
	\end{aligned}
	\end{equation}
	By Assumption \ref{assume.sampling} and $x\in\bar{X}$, it holds that
	$$\begin{aligned}
	&~~~~\mathbb{E}\left[\sum_{i\in J_k} w_{i,k} \big(f_i(x_k)-l_{J_k}^{*}\big)\mid \mathcal{F}_k\right]\\
	&=\mathbb{E}\left[\sum_{i\in J_k} w_{i,k} \big(f_i(x_k)-f_i(x)\big)\mid \mathcal{F}_k\right]+\mathbb{E}\left[\sum_{i\in J_k} w_{i,k} \big(f_i(x)-l_{J_k}^{*}\big)\mid \mathcal{F}_k\right]\\
	&=F(x_k)-\bar{F}+\sigma_{\tau}^2,
	\end{aligned}$$
	and 
	$$\mathbb{E}\!\left[\sum_{i\in J_k} w_{i,k} \big(f_i(x)-l_{J_k}^{*}\big)\mid \mathcal{F}_k\right]=\sigma_{\tau}^2.$$
	Thus, for any $x\in\bar{X}$ (\ref{eq6:2}) can be rewritten as
	\begin{equation}
	\label{eq6:3}
	\begin{aligned}
	\mathbb{E}\!\left[\inf_{x\in \bar{X}}D_{\psi}^{x_{k+1}^*}(x_{k+1},x)\mid \mathcal{F}_k\right]
	&\leq
	D_{\psi}^{x_{k}^*}(x_{k},x)
	-\lambda_k \tilde{L}\left(1-\frac{\lambda_k}{2}\right)(F(x_k)-\bar{F}+\sigma_{\tau}^2)
	+\lambda_k\frac{\gamma_b}{\lambda_0}\sigma_{\tau}^2\\
	&\leq D_{\psi}^{x_{k}^*}(x_{k},x)
	-\lambda_k \tilde{L}\left(1-\frac{\lambda_k}{2}\right)(F(x_k)-\bar{F})
	+\lambda_k\frac{\gamma_b}{\lambda_0}\sigma_{\tau}^2.
	\end{aligned}
	\end{equation}
	Taking the infimum over $x\in\bar{X}$ yields
	\begin{equation}
	\label{eq6:4}
	\begin{aligned}
	\mathbb{E}\!\left[\inf_{x\in \bar{X}}D_{\psi}^{x_{k+1}^*}(x_{k+1},x)\mid \mathcal{F}_k\right]
	&\leq \inf_{x\in \bar{X}}D_{\psi}^{x_{k}^*}(x_{k},x)
	-\lambda_k \tilde{L}\left(1-\frac{\lambda_k}{2}\right)(F(x_k)-\bar{F})
	+\lambda_k\frac{\gamma_b}{\lambda_0}\sigma_{\tau}^2.
	\end{aligned}
	\end{equation}
	By Assumption \ref{assume.2}, we have $F(x_k)-\bar{F}\geq\gamma\,\operatorname{dist}_{\psi}^{x_{k}^*}(x_{k},\bar{X})^2$. Hence, taking the full expectation on (\ref{eq6:4}) yields
	\begin{equation*}
	\mathbb{E}\!\left[\operatorname{dist}_{\psi}^{x_{k+1}^*}(x_{k+1},\bar{X})^2\mid \mathcal{F}_k\right]
	\leq
	\left[1-\lambda_k\left(1-\frac{\lambda_k}{2}\right)\gamma \tilde{L}\right]\operatorname{dist}_{\psi}^{x_{k}^*}(x_{k},\bar{X})^2
	+\lambda_k\frac{\gamma_b}{\lambda_0}\sigma_{\tau}^2.
	\end{equation*}
Applying the tower property of conditional expectation yields
\begin{equation*}
\mathbb{E}\!\left[\operatorname{dist}_{\psi}^{x_{k+1}^*}(x_{k+1},\bar{X})^2\right]
\leq
\left[1-\lambda_k\left(1-\frac{\lambda_k}{2}\right)\gamma \tilde{L}\right]\mathbb{E}\!\left[\operatorname{dist}_{\psi}^{x_{k}^*}(x_{k},\bar{X})^2\right]
+\lambda_k\frac{\gamma_b}{\lambda_0}\sigma_{\tau}^2.
\end{equation*}
This completes the proof.
\end{proof}

\subsection*{The Proof of Theorem \ref{th3}}

\begin{proof}
		Starting from Lemma \ref{lemma5.1} (b), for any $x\in H_k$ it holds that
		$$
		D_{\psi}^{x_{k+1}^*}(x_{k+1},x)
		\leq
		D_{\psi}^{x_{k}^*}(x_{k},x)
		-\frac{\mu}{2L_{\max}^2L_{\operatorname{adapt}}^{k,\tau}}\sum_{i\in J_k} w_{i,k}\|\nabla f_i(x_k)\|^2.
		$$
		It follows from $L_{\operatorname{adapt}}^{k,\tau}\leq L_{\tau}^{\operatorname{block}}$ that we obtain
		\begin{equation}
		\label{eq4:12}
		D_{\psi}^{x_{k+1}^*}(x_{k+1},x)
		\leq
		D_{\psi}^{x_{k}^*}(x_{k},x)
		-\frac{\mu}{2L_{\max}^2L_{\tau}^{\operatorname{block}}}\sum_{i\in J_k} w_{i,k}\|\nabla f_i(x_k)\|^2.
		\end{equation}
 Hence, for any $x\in\bar{X}$, taking the infimum over $x\in\bar{X}$ on (\ref{eq4:12}) and then taking expectation conditional on $\mathcal{F}_k=\{J_0,\ldots,J_{k-1}\}$ yield
		\begin{equation}
		\label{eq4:10}
		\begin{aligned}
		&~~~~\mathbb{E}\!\left[\operatorname{dist}_{\psi}^{x_{k+1}^*}(x_{k+1},\bar{X})^2\mid \mathcal{F}_k \right]\\
		&\leq
		\mathbb{E}\!\left[\operatorname{dist}_{\psi}^{x_{k}^*}(x_{k},\bar{X})^2\mid \mathcal{F}_k\right]
		-\frac{\mu}{2L_{\max}^2L_{\tau}^{\operatorname{block}}}\mathbb{E}\!\left[\sum_{i\in J_k} w_{i,k}\|\nabla f_i(x_k)\|^2\mid \mathcal{F}_k\right].
		\end{aligned}
		\end{equation}
		By Assumption \ref{assume.sampling}, the definition of $L$, we have
		\begin{equation}
		\label{eq4:7}
		\begin{aligned}
		\mathbb{E}\!\left[\sum_{i\in J_k} w_{i,k}\|\nabla f_i(x_k)\|^2\mid \mathcal{F}_k\right]
		&=\mathbb{E}[\|\nabla f_i(x_k)\|^2\mid \mathcal{F}_k]
		\geq
		\frac{1}{L}\|\nabla F(x_k)\|^2.
		\end{aligned}
		\end{equation}
		Combining (\ref{eq4:10}) and (\ref{eq4:7}) and taking full expectation on both sides, we obtain
		$$\mathbb{E}\operatorname{dist}_{\psi}^{x_{k+1}^*}(x_{k+1},\bar{X})^2
		\leq
		\mathbb{E}\operatorname{dist}_{\psi}^{x_{k}^*}(x_{k},\bar{X})^2
		-\frac{\mu}{2LL_{\max}^2L_{\tau}^{\operatorname{block}}}
		\mathbb{E}\|\nabla F(x_k)\|^2.$$
		Summing the above inequality from $k=0$ to $k=K-1$ gives
		$$
		\frac{\mu}{2LL_{\max}^2L_{\tau}^{\operatorname{block}}}\sum_{k=0}^{K-1}\mathbb{E}\|\nabla F(x_k)\|^2
		\leq
		\operatorname{dist}_{\psi}^{x_{0}^*}(x_{0},\bar{X})^2.
		$$
		Dividing both sides by $K$ yields
		$$
		\min_{0\leq k\leq K-1}\mathbb{E}\|\nabla F(x_k)\|^2
		\leq
		\frac{1}{K}\sum_{k=0}^{K-1}\mathbb{E}\|\nabla F(x_k)\|^2
		\leq
		\frac{2LL_{\max}^2L_{\tau}^{\operatorname{block}}}{\mu K}
		\operatorname{dist}_{\psi}^{x_{0}^*}(x_{0},\bar{X})^2.
		$$
		
		Additionally, by (\ref{eq4:7}) and the PL inequality on $F$, we have
		\begin{equation*}
		\label{eq4:7pl}
		\begin{aligned}
		\mathbb{E}\!\left[\sum_{i\in J_k} w_{i,k}\|\nabla f_i(x_k)\|^2\mid \mathcal{F}_k\right]
		&
		\geq
		\frac{1}{L}\|\nabla F(x_k)\|^2
        \geq \frac{\gamma\mu}{2L}[F(x_k)-\bar{F}].
		\end{aligned}
		\end{equation*}
		Similarly, we can deduce that
		$$
		\mathbb{E}[F(\bar{x}_K)-\bar{F}]
		\leq 
		\frac{4LL_{\max}^2L_{\tau}^{\operatorname{block}}}{\mu^2\gamma K}
		\operatorname{dist}_{\psi}^{x_{0}^*}(x_{0},\bar{X})^2,
		$$
		which proves the sublinear convergence rate.
\end{proof}

\subsection*{The proof of Theorem \ref{th5.5}}

\begin{proof}
	 Starting from (\ref{eq4:12}), (\ref{eq4:7}), and the sampling method in Assumption \ref{assume.sampling}, taking the infimum over $\bar{X}$ and then taking the conditional expectation with respect to $J_k$ given $\mathcal{F}_k$, we obtain
	\begin{equation}
	\label{eq6:5}
	\begin{aligned}
	\mathbb{E}\left[\inf_{x\in \bar{X}}D_{\psi}^{x_{k+1}^*}(x_{k+1},x)\mid \mathcal{F}_k\right]
	&\leq
	\inf_{x\in \bar{X}}D_{\psi}^{x_{k}^*}(x_{k},x)
	-\frac{\mu}{2L_{\max}^2L_{\tau}^{\operatorname{block}}}
	\mathbb{E}\left[\|\nabla f_i(x_k)\|^2\mid \mathcal{F}_k\right].
	\end{aligned}
	\end{equation}
	By the definition of $L$, the PL inequality (\ref{eq4:9}) deduced from Assumption \ref{assume.2}, and Assumption \ref{assume.2}, we obtain
	\begin{equation}
	\label{eq6:8}
	\begin{aligned}
	\mathbb{E}\left[\|\nabla f_i(x_k)\|^2\mid \mathcal{F}_k\right]	
	&\geq 
	\frac{1}{L}\|\nabla F(x_k)\|^2
	\geq 
	\frac{\gamma\mu}{2L}(F(x_k)-\bar{F})
	\geq
	\frac{\gamma^2\mu}{2L}\inf_{x\in \bar{X}}D_{\psi}^{x_k^*}(x_k,x).
	\end{aligned}
	\end{equation}	
	Combining (\ref{eq6:5}) and (\ref{eq6:8}) leads to
	$$\begin{aligned}
	\mathbb{E}\left[\inf_{x\in\bar{X}}D_{\psi}^{x_{k+1}^*}(x_{k+1},x)\mid \mathcal{F}_k\right]
	&\leq 
	\left(1-\frac{\gamma^2\mu^2}{4L_{\max}^2L_{\tau}^{\operatorname{block}}L}\right)
	\inf_{x\in\bar{X}}D_{\psi}^{x_{k}^*}(x_{k},x).
	\end{aligned}
	$$
	 Taking full expectation on both sides yields
	$$ \mathbb{E}[\inf_{x\in\bar{X}}D_{\psi}^{x_{k+1}^*}(x_{k+1},x)]
	\leq
	\left(1-\frac{\gamma^2\mu^2}{4L_{\max}^2L_{\tau}^{\operatorname{block}}L}\right)\mathbb{E}[\inf_{x\in\bar{X}}D_{\psi}^{x_{k}^*}(x_{k},x)].
	$$
	The proof is completed.
	
\end{proof}

\subsection*{The proof of Proposition \ref{th5.6}}

\begin{proof}
	According to the three-points identity in Lemma 3.1 of \cite{chen1993convergence}, for any convex and continuously differentiable function $\psi:\mathbb{R}^n\rightarrow \mathbb{R}$ and any three points $x,y,z\in\mathbb{R}^n$, the following identity holds
	\begin{equation*}
	D_{\psi}(x,y)+D_{\psi}(y,z)-D_{\psi}(x,z)=\langle \nabla \psi(x)-\nabla \psi(y),z-y\rangle.
	\end{equation*}
	Using the three-points identity, we have for any $x\in\mathbb{R}^n$
	\begin{equation*}
	D_{\psi}(x_k,x_{k+1})+D_{\psi}(x_{k+1},x)-D_{\psi}(x_k,x)=\langle \nabla \psi(x_k)-\nabla \psi(x_{k+1}),x-x_{k+1}\rangle.
	\end{equation*}
	Since $\psi$ is differentiable, we have $x_k^*=\nabla\psi(x_k)$ for all $k$. It follows from the SBBP update that $\nabla \psi(x_k)-\nabla \psi(x_{k+1})=t_k\nabla f_{J_k}(x_k)$, and therefore we arrive at
	\begin{equation}
	\label{eq7:2}
	D_{\psi}(x_k,x_{k+1})+D_{\psi}(x_{k+1},x)-D_{\psi}(x_k,x)=\langle t_{k}\nabla f_{J_k}(x_k),x-x_{k+1}\rangle.
	\end{equation}
	From the three-points identity applied to $f_{J_k}$, we have
	\begin{equation}
	\label{eq7:3}
	t_kD_{f_{J_k}}(x_k,x_{k+1})+t_kD_{f_{J_k}}(x_{k+1},x)-t_kD_{f_{J_k}}(x_k,x)=t_k\langle \nabla f_{J_k}(x_k)-\nabla f_{J_k}(x_{k+1}),x-x_{k+1}\rangle.
	\end{equation}
	When $t_k\in (0,\frac{\mu}{L_{\max}}]$, we can verify that $\psi-t_kf_{J_k}$ is a convex function since $\psi$ is $\mu$-strongly convex and each $f_i,~i=1,\cdots,m$ is $L_i$-smooth with $L_{\max}=\max_{i\in [m]}L_i$.
	Subtracting (\ref{eq7:3}) from (\ref{eq7:2}) we obtain
	$$
	D_{\psi-t_kf_{J_k}}(x_k,x_{k+1})+D_{\psi-t_kf_{J_k}}(x_{k+1},x)-D_{\psi-t_kf_{J_k}}(x_k,x)=\langle t_{k}\nabla f_{J_k}(x_{k+1}),x-x_{k+1}\rangle,
	$$
	which can be reformulated as
	\begin{equation}
	\begin{aligned}
	\label{eq7:4}
	&~~~~D_{\psi}(x_k,x)\\
	&=t_kD_{f_{J_k}}(x_k,x)+D_{\psi}(x_{k+1},x)-t_kD_{f_{J_k}}(x_{k+1},x)+D_{\psi-t_kf_{J_k}}(x_k,x_{k+1})\\
	&\qquad\qquad\qquad\qquad -\langle t_{k}\nabla f_{J_k}(x_{k+1}),x-x_{k+1}\rangle\\
	&=t_kD_{f_{J_k}}(x_k,x)+D_{\psi}(x_{k+1},x)+D_{\psi-t_kf_{J_k}}(x_k,x_{k+1})\\
	&\qquad\qquad\qquad\qquad -t_k[D_{f_{J_k}}(x_{k+1},x)+\langle \nabla f_{J_k}(x_{k+1}),x-x_{k+1}\rangle]\\
	&=t_kD_{f_{J_k}}(x_k,x)+D_{\psi}(x_{k+1},x)+D_{\psi-t_kf_{J_k}}(x_k,x_{k+1})-t_k[f_{J_k}(x)-f_{J_k}(x_{k+1})].
	\end{aligned}
	\end{equation}
	For the consistent CFP, we assume that $\hat{x}$ is the unique solution of the bilevel optimization (\ref{eq3.3}). Then we have $f_{i}(\hat{x})=0,~i=1,\cdots,m$ and (\ref{eq7:4}) implies that
	\begin{equation}
	\label{eq7:8}
	D_{\psi}(x_{k+1},\hat{x})\leq D_{\psi}(x_k,\hat{x})- t_kD_{f_{J_k}}(x_k,\hat{x}).
	\end{equation}
	
	It follows from the $\mu_f$-strong convexity of $f_i$ that we have 
	\begin{equation}
	\label{eq7:9}
	D_{f_{J_k}}(x_k,\hat{x})\geq \frac{\mu_f}{2}\|x_k-\hat{x}\|^2.
	\end{equation}
	Combining (\ref{eq7:8}) and (\ref{eq7:9}), we have
	$$D_{\psi}(x_{k+1},\hat{x})\leq D_{\psi}(x_k,\hat{x})- \frac{\mu_f}{2}t_k\|x_k-\hat{x}\|^2.$$
	Assume that $\psi$ has an $L_{\psi}$-Lipschitz continuous gradient, then we have
	$$\frac{L_{\psi}}{2}\|x_k-\hat{x}\|^2\geq D_{\psi}(x_k,\hat{x}).$$
	Therefore, it holds that
	\begin{equation}
	\label{eq7:11}
	D_{\psi}(x_{k+1},\hat{x})\leq
	\left(1- \frac{\mu_f}{L_{\psi}}t_k\right) D_{\psi}(x_k,\hat{x}).
	\end{equation}
	Taking expectations on both sides of (\ref{eq7:11}) yields the stated result.
\end{proof}

\end{appendices}

\bibliography{ref}

\end{document}